% !TeX spellcheck = en_UK
\documentclass[12pt,a4paper]{article}
\usepackage[margin=2.8cm,footskip=1cm]{geometry}
\setlength{\marginparwidth}{2.4cm}
\usepackage[utf8]{inputenc}
\usepackage[T1]{fontenc}
\usepackage[english]{babel}
\usepackage{amsmath}
\usepackage{amsthm}
\usepackage{amsfonts}
\usepackage{amssymb}
\usepackage{enumitem}
\usepackage{ucs}
\usepackage{graphicx}
\DeclareGraphicsExtensions{.png,.pdf}
\usepackage[svgnames]{xcolor}
\usepackage{dsfont}
\usepackage{tikz}
\usepackage{pgfplots}
\usetikzlibrary{arrows.meta}
\pgfplotsset{compat=1.18}
\usepackage{wasysym}
\usepackage{physics}
\usepackage{mathtools}
\usepackage{xifthen}
\usepackage{url}
\usepackage{bm}
\usepackage[hidelinks]{hyperref}
\usepackage{authblk}
\usepackage[font=footnotesize]{caption}

%%%%%%%%%%%%%%%%%%%%%%%%%% MACROS %%%%%%%%%%%%%%%%%%%%%%%%%%%%%%%%%%

%%%%%%%%%%%%%%%%%  General Math  %%%%%%%%%%%%%%%%%%%%%%%%%%%%%%%%%%%%%%%%%%%%%%

\newcommand{\babs}[1]{\bigl\lvert #1 \bigr\rvert}
\newcommand{\Babs}[1]{\Bigl\lvert #1 \Bigr\rvert}

\DeclarePairedDelimiterX{\setof}[2]{\{}{\}}{#1 \,:\, #2}

\newcommand{\rS}{\mathfrak{s}}
\newcommand{\kerPF}{\mathcal{Z}}

%%%%%%%%%%%%%%%%%  Proba  %%%%%%%%%%%%%%%%%%%%%%%%%%%%%%%%%%%%%%%%%%%%%%

\newcommand{\given}{\,|\,}
\newcommand{\bgiven}{\,\big|\,}
\newcommand{\Bgiven}{\,\Big|\,}

\newcommand{\Var}{\mathrm{Var}}
\newcommand{\Cov}{\mathrm{Cov}}

\newcommand{\BrownExc}{\mathfrak{e}}

%%%%%%%%%%%%%%%%%%%%%%%%%% LETTERS %%%%%%%%%%%%%%%%%%%%%%%%%%%%%%%%%

\newcommand{\Z}{\mathbb{Z}}
\newcommand{\R}{\mathbb{R}}

\newcommand{\rmP}{\mathrm{P}}

\newcommand{\calF}{\mathcal{F}}

\newcommand{\calI}{\mathcal{I}}

\newcommand{\calM}{\mathcal{M}}
\newcommand{\calN}{\mathcal{N}}

%%%%%%%%%%% Specific %%%%%%%%%%%%%%%%%%%%%%%%

\newcommand{\niceSlopeSet}{\mathcal{NS}}

\newcommand{\Inter}{\mathrm{Inter}}
\newcommand{\Small}{\mathrm{Small}}
\newcommand{\PreSmall}{\mathrm{PreSmall}}

%%%%%%%%%%% TeXing %%%%%%%%%%%%%%%%%%%%%%%%

%%%%%%%%%%%%%%%%%%%%%%%%%% THMs %%%%%%%%%%%%%%%%%%%%%%%%%%%%%%%%%%

\theoremstyle{plain}
\newtheorem{theorem}{Theorem}[section]
\newtheorem{lemma}[theorem]{Lemma}

\newtheorem{remark}{Remark}[section]

\newtheorem{claim}{Claim}

\theoremstyle{definition}

\newtheorem*{definition*}{Definition}

%%%%%%%%%%%%%%%%%%%%%%%%%%%%%%%%%%%%%%%%%%%%%%%%%%%%%%%%%%%%%%%%%%%%

\setcounter{tocdepth}{1}

\title{Random Walk conditioned to stay above a non-flat floor: curvature effects}
\author[$\dagger$]{Sébastien Ott}
\author[$\star$]{Yvan Velenik}
\affil[$\dagger$]{EPFL, Lausanne, CH}
\affil[$\star$]{University of Geneva, Geneva, CH}

\date{\today}

\begin{document}

\maketitle

\begin{abstract}
    Let \(h:[0,1]\to\R\) be \(C^2\) and such that \(\sup_{[0,1]} h''<0\). For a (large) positive integer \(n\), set \(h_n(k) = n h(k/n)\) for any \(k\in\{0,\dots,n\}\). We consider a random walk \((S_k)_{k\geq 0}\) with i.i.d.\ centred increments having some finite exponential moments.
    We are interested in the event \(\{S\geq h_n\} = \{S_k\geq h_n(k)\;\forall k\in\{0,\dots,n\}\}\).
    It is well known that \(P(S\geq h_n \given S_0=0,\, S_n=\lceil h_n(n) \rceil) = e^{n\int_0^1 I(h'(s)) \,ds + o(n)}\), where \(I\) is the Legendre--Fenchel transform of the log-moment generating function associated to the increments. We first prove that the leading correction is of order \(e^{-\Theta(n^{1/3})}\). We then turn our attention to the conditional random walk measure \(P^h_n = P(\cdot \given S\geq h_n, S_0=0, S_n=\lceil h_n(n) \rceil)\). We prove that the one-point tails are of the form
    \(\mathbb{P}_n^h (S_k \geq h_n(k) + t n^{1/3} ) = e^{-\Theta(t^{3/2})}\) for all \(t<n^\beta\) for any \(\beta\in (0,1/6)\). Moreover, we prove that, for any \(r\geq 1\), \(E_n^h((S_k-h_n(k))^r) = \Theta(n^{r/3})\) and \(\Var_{P_n^h}(S_k) = \Theta(n^{2/3})\), for all \(k\) far enough from \(0\) and \(n\). In addition, we show that \(\Cov_{P_n^h}(S_k,S_\ell) \leq e^{-O(|\ell-k|/n^{2/3})}\) for all \(k,\ell\) not too close to \(0\) and \(n\).

    Finally, we show, in a restricted Gaussian setup, that relaxing the assumptions of smoothness or strict concavity of the obstacle drastically changes the behavior.
\end{abstract}

% \tableofcontents

\section{Introduction and main results}

The study of atypical behavior of random walk trajectories has a rich history. A cornerstone result in this area is Mogulskii’s large deviation principle~\cite{Mogulskii-1976}, building on earlier work by Borovkov~\cite{Borovkov-1967}, which states that for a random walk \((S_n)_{n\geq 0}\) with i.i.d.\ increments \((X_k)_{k\geq 1}\) having a finite log-moment generating function \(H(\lambda)=\log E(e^{\lambda X_1})\), the distribution of the rescaled linearly-interpolated trajectories \([0,1]\ni t \mapsto S_n(t) = \frac1n (S_{\lfloor nt\rfloor}+(nt-\lfloor nt\rfloor)X_{\lfloor nt\rfloor+1})\) satisfies a large deviation principle in \(C([0,1])\) with rate function \(\calI(\gamma) = \int_0^1 H^*(\gamma'(t))\, dt\) for absolutely continuous \(\gamma\) (and \(+\infty\) otherwise), where \(H^*(x)=\sup_\lambda (\lambda x - H(\lambda))\) is the Legendre--Fenchel transform of \(H\). This result can be applied, for instance, to obtain the leading order for the probability that the trajectory of the random walk remains above an obstacle.

In this work, we focus on the case of a rather general random walk bridge whose endpoints lie on a strongly concave obstacle, and whose trajectory is conditioned to stay above the latter. At the large deviation level, the corresponding question for a Brownian motion or a Brownian bridge was addressed already many decades ago; see for instance \cite{Novikov-1979, GroeneBoom-1989}. Our goals are, on the one hand, to determine the order of the correction to the large deviation probability and, on the other hand, to describe the statistical properties of the trajectories. It turns out that this problem exhibits a hallmark trio of universal exponents:
\begin{itemize}
	\item transversal fluctuations scaling as \(n^{1/3}\),
	\item a longitudinal correlation length scaling as \(n^{2/3}\),
	\item \(e^{-O(\lambda^{3/2})}\) decay for the probability of reaching a height at least \(\lambda n^{1/3}\) at a given time (for \(\lambda\) growing not too fast with \(n\)).
\end{itemize}
These exponents are characteristic of an important universality class. On the one hand, this class  contains models for which these exponents results from an interplay between curvature and fluctuations:
\begin{itemize}
	\item In the context of a Brownian bridge over \([-T,T]\), conditioned to start and end at \(0\) and remain above the semicircle \([-T,T]\ni t\mapsto \sqrt{T^2-t^2}\), it was proved~\cite{Ferrari+Spohn-2005} that the average height above the obstacle scales as \(T^{1/3}\), covariances decay exponentially with a rate of order \(T^{-2/3}\) and, after centering and rescaling, the process converges locally to a stationary diffusion on \(\R_+\), now known as a Ferrari--Spohn diffusion. They also obtain the \(3/2\) exponent, at least implicitly, as it appears as the decay exponent of the Airy function.
	\item The same exponents arise in the study of planar supercritical FK percolation clusters~\cite{Alexander-2001,Alexander+Uzun-2003,Hammond-2012}: the maximal distance between the boundary of an FK cluster conditioned to be of size \(n\gg 1\) and its convex hull is of order \(n^{1/3}\) and the size of the longest facet of the convex hull is of order \(n^{2/3}\) (up to specific logarithmic corrections).
	\item Analogous results were also obtained for planar Brownian motion conditioned to enclose a large area~\cite{Hammond+Peres-2008} and for one-dimensional random walk trajectories in a quadrant under a similar conditioning~\cite{dAlimonte+Panis-2023}.
\end{itemize}
On the other hand, this universality class also contains models for which such a curvature-fluctuations mechanism is not readily apparent:
\begin{itemize}
	\item In~\cite{Abraham+Smith-1986,Hryniv+Velenik-2004, Ioffe+Shlosman+Velenik-2015}, a general class of one-dimensional random walks conditioned to stay positive and subject to an exponential area penalization were shown to lead to the same exponents, as well as the same Ferrari--Spohn scaling limit.
	\item The same exponents remains valid for suitable generalizations to systems of ordered random walks~\cite{Ioffe+Velenik+Wachtel-2018,Caputo+Ganguly-2025,Dimitrov+Serio-2025,Hegde+Kim+Serio-2025}.
	\item The same exponents appear in important problems originating from equilibrium statistical mechanics: they describe the statistical properties of a layer of unstable phase of a planar Ising model in an external field~\cite{Velenik-2004,Ganguly+Gheissari-2021,Ioffe+Ott+Shlosman+Velenik-2022}, as well as those of the level lines of a \((2+1)\)-dimensional SOS model above a wall~\cite{Caputo+Lubetzky+Martinelli+Sly+Toninelli-2016,Caddeo+Kim+Lubetzky-2024}.
\end{itemize}

Our analysis is largely based on a reduction procedure that reformulates the problem of a random walk above a concave obstacle as a problem of a (time-inhomogeneous) random walk conditioned to stay positive and subject to a (time-inhomogeneous) external potential. This provides a robust link between the two apparently different classes of problems described above.
Note that, in the special case of a Gaussian random walk (or Brownian motion), the corresponding link can be derived by a simple change of variables (or Girsanov transformation); we exploit this in the proof of Theorem~\ref{thm:xp} below.

Moreover, we extend our investigation to a broader class of concave obstacles, lacking either smoothness or strong concavity at a point. In this setting, we rigorously derive, for a class of Gaussian random walks, the scaling behavior conjectured in the physics literature~\cite{Nechaev+Polovnikov+Shlosman+Valov+Vladimirov-2019, Smith+Meerson-2019}.

We expect that the method developed in this paper will have several interesting applications to equilibrium statistical mechanics. First, to the analysis of an interface in the planar Ising model forced to remain above a concave piece of the system's boundary. Second, to the asymptotic behavior of the 2-point function of the Ising model on \(\Z^d\), \(d\geq 2\), above its critical temperature, when the two spins are located on the boundary of the system (say, with free boundary condition); if the corresponding piece of the boundary is concave, we expect a very different behavior compared to the corresponding Ornstein--Zernike asymptotics associated to two spins in the bulk of the system, or to two spins located on a convex or affine piece of the boundary.

\subsection{Notations and main objects}

The notations/definitions given here are fixed for the whole paper. We work on some abstract, fixed, probability space \((\Omega,\calF,P)\) and all our variables are defined on that space. Consider an i.i.d.\ sequence of real random variables \(X,X_1,X_2,\dots\). Suppose that there are \(\sigma,\delta>0\) such that
\begin{equation}
 \label{eq:conditions_steps}
    E(X) = 0,\quad E(X^2) = \sigma^2,\quad E(e^{\delta|X|})<\infty.
\end{equation}
Let then \(a_*<0<b_*\) be defined by
\begin{equation}
\label{eq:dual_allowed_slopes}
    a_* = \inf\{t\in \R:\, E(e^{tX})<\infty\},
    \quad
    b_* = \sup\{t\in \R:\, E(e^{tX})<\infty\}.
\end{equation}
Define the moment and cumulant generating functions:
\begin{gather*}
    M_X:(a_*,b_*)\to \R, \quad M_X(t) = E(e^{tX}),
    \\
    H_X:(a_*,b_*)\to \R, \quad H_X(t) = \ln(M_X(t)).
\end{gather*}
For readability, set \(H\equiv H_X\), \(M\equiv M_X\).

We will frequently use the notation \(f\geq g\) with \(f,g\) either real functions or vectors: the inequality sign stands for the pointwise partial order on the corresponding object.

\subsection{Results}
\label{subsec:results}

\subsubsection{Leading-order correction to the large deviation probability}
Our first result shows that, under suitable conditions, the correction to the leading large deviation asymptotics is of order \(e^{-\Theta(n^{1/3})}\).
\begin{theorem}
\label{thm:unif_curv_free_energy}
    Let \(X, X_1,X_2,\dots\) be an i.i.d.\ family of \(\Z\)-valued, irreducible, aperiodic random variables satisfying~\eqref{eq:conditions_steps}. Let \(a_*,b_*\) be defined as in~\eqref{eq:dual_allowed_slopes}, and \(a_*<a<b<b_*\). Let \(h\in C^2([0,1])\) be a non-negative concave function with \(h''<0\), \(\mathrm{Image}(h')\subset H_X'([a,b])\), and \(h(0)=0\). Define
    \begin{equation*}
        h_n(k) = nh(k/n),\quad k=0,\dots, n.
    \end{equation*}
    Then, there are \(c_+\geq c_- >0, n_0\geq 1\) such that for any \(n\geq n_0\),
    \begin{equation*}
        e^{-c_+n^{1/3}}
        \leq
        e^{n\int_{0}^1 I(h'(s))ds} P\bigl(S\geq h_n, S_n = \lceil h_n(n)\rceil\bigr)
        \leq
        e^{-c_-n^{1/3}},
    \end{equation*}
    where \(S_0=0\), \(S_k=S_{k-1}+X_k\) and \(I(x) = \sup_{\lambda} \bigl(\lambda x - H(x)\bigr)\).
\end{theorem}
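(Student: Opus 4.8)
\emph{Proof strategy.}

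\emph{Step 1 (reduction to an area-penalised positive bridge).} The plan is to tilt the increments so that, under the new law, the walk follows $h_n$ on average, and then to read the two constraints as a ``stay positive'' event dressed with an area penalty produced by the curvature of $h$. For $k=1,\dots,n$ let $\lambda_k:=(H')^{-1}\bigl(h'(k/n)\bigr)$; since $h'(k/n)\in\Image(h')\subset H'([a,b])$ this is well defined with $\lambda_k\in[a,b]\subset(a_*,b_*)$. Let $\widetilde P$ be the probability under which $X_1,\dots,X_n$ are independent and $X_k$ has the tilted law $e^{\lambda_k x-H(\lambda_k)}P(X\in dx)$ (so $\widetilde E(X_k)=H'(\lambda_k)=h'(k/n)$), write $\widetilde E$ for the corresponding expectation and $\calF_k:=\sigma(X_1,\dots,X_k)$. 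On $\calF_n$ one has $\tfrac{dP}{d\widetilde P}=\exp\bigl(-\sum_{k=1}^n\lambda_kX_k+\sum_{k=1}^nH(\lambda_k)\bigr)$. Set $W_k:=S_k-h_n(k)$; then $W_0=0$, $\{S\ge h_n\}=\{W_k\ge 0\ \forall k\}$ and $\{S_n=\lceil h_n(n)\rceil\}=\{W_n=r_n\}$ with $r_n:=\lceil h_n(n)\rceil-h_n(n)\in[0,1)$. Summation by parts (using $S_0=h_n(0)=0$) gives
\begin{equation*}
  \sum_{k=1}^n\lambda_kX_k=\sum_{k=1}^n\lambda_k\bigl(h_n(k)-h_n(k-1)\bigr)+\lambda_nW_n-\sum_{k=1}^{n-1}(\lambda_{k+1}-\lambda_k)W_k,
\end{equation*}
and, using $h_n(k)-h_n(k-1)=h'(k/n)+O(1/n)$, the identity $\lambda_kh'(k/n)-H(\lambda_k)=I(h'(k/n))$, and an elementary Riemann-sum bound (both $h'$ and $I$ being $C^1$ on the relevant compact set), one finds $\sum_{k=1}^n\bigl[\lambda_k(h_n(k)-h_n(k-1))-H(\lambda_k)\bigr]=n\int_0^1 I(h'(s))\,ds+O(1)$. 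Hence
\begin{equation*}
  e^{\,n\int_0^1 I(h'(s))\,ds}\,P\bigl(S\ge h_n,\ S_n=\lceil h_n(n)\rceil\bigr)=e^{O(1)}\,\widetilde E\Bigl[\mathbf{1}_{\{W\ge 0\}}\,\mathbf{1}_{\{W_n=r_n\}}\,e^{-\lambda_nW_n}\exp\Bigl(\sum_{k=1}^{n-1}(\lambda_{k+1}-\lambda_k)W_k\Bigr)\Bigr].
\end{equation*}
The decisive feature is that the increments of the tilt are negative and of order $1/n$: with $\lambda(s):=(H')^{-1}(h'(s))$, a $C^1$ function with $\lambda'(s)=h''(s)/H''(\lambda(s))$, the hypotheses $h''\le-c_0<0$ and $0<\kappa\le H''\le\kappa'$ on $[a,b]$ give $\lambda_{k+1}-\lambda_k=\tfrac1n\lambda'(k/n)+O(1/n^2)=:-\mu_k$ with $\mu_1/n\le\mu_k\le\mu_2/n$ for constants $0<\mu_1\le\mu_2$ and $n$ large; in particular $\sum_{k=1}^{n-1}\mu_k$ is bounded above and below. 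Since $e^{-\lambda_nW_n}=\Theta(1)$ on $\{W_n=r_n\}$, and since $\sum_{k=1}^{n-1}(\lambda_{k+1}-\lambda_k)W_k=-\sum_{k=1}^{n-1}\mu_kW_k\le 0$ on $\{W\ge 0\}$ is a genuine area penalty, it suffices to prove that
\begin{equation*}
  Q_n:=\widetilde E\Bigl[\mathbf{1}_{\{W\ge 0\}}\,\mathbf{1}_{\{W_n=r_n\}}\,\exp\Bigl(-\sum_{k=1}^{n-1}\mu_kW_k\Bigr)\Bigr]=e^{-\Theta(n^{1/3})}.
\end{equation*}
Under $\widetilde P$, $W$ is a random walk bridge from $0$ to $r_n$ whose increments are tilts of $X$ by parameters in the fixed compact set $[a,b]$ — hence uniformly non-degenerate, with uniform exponential moments and slowly varying in $k$; moreover $\widetilde E(W_k-W_{k-1})=h'(k/n)-(h_n(k)-h_n(k-1))=O(1/n)$, so $W$ is centred up to a bounded total drift, and $W_k\in\Z-h_n(k)$.

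\emph{Step 2a (heuristic and lower bound).} If $W$ stayed at heights of order $\ell$ over all of $\{0,\dots,n\}$, the area penalty would be of order $(\sum_k\mu_k)\,\ell\asymp\ell$, while confining a centred walk to a strip of width $\ell$ for $n$ steps costs $e^{-\Theta(n/\ell^2)}$; the two balance at $\ell\asymp n^{1/3}$, which is the source of the exponent. We make this precise with $R:=n^{1/3}$. On $\{0\le W_k\le R\ \forall k\}$ we have $\sum_k\mu_kW_k\le(\sum_k\mu_k)R\le CR$, so $Q_n\ge e^{-CR}\,\widetilde P(0\le W_k\le R\ \forall k,\ W_n=r_n)$. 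By a standard estimate for walks confined to a strip — the transition kernel killed outside $[0,R]$ has principal eigenvalue $1-\Theta(R^{-2})$ with a positive principal eigenfunction comparable to $\sin(\pi x/R)$, and the local CLT accommodates the bridge endpoint — this probability is at least $e^{-\Theta(n/R^2)}=e^{-\Theta(n^{1/3})}$, uniformly in $n$ (the time-inhomogeneity being slowly varying). Hence $Q_n\ge e^{-\Theta(n^{1/3})}$.

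\emph{Step 2b (upper bound).} Here one simply drops $\mathbf{1}_{\{W_n=r_n\}}$. Put $\mathcal L:=\{k\in\{1,\dots,n-1\}:W_k\le R\}$. If $|\mathcal L|\le n/2$, then $W_k>R$ for at least $n/3$ indices (for $n$ large), hence $\sum_k\mu_kW_k\ge(\min_k\mu_k)\,R\,\tfrac n3\ge\tfrac{\mu_1}{3}\,n^{1/3}$, so the contribution of this regime to $Q_n$ is $\le e^{-\Theta(n^{1/3})}$. If $|\mathcal L|>n/2$, define stopping times $\tau_0:=0$ and $\tau_{j+1}:=\inf\{k>\tau_j+R^2:\ W_k\le R\}$. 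Since each interval $(\tau_j,\tau_j+R^2]$ contains at most $R^2+1$ elements of $\mathcal L$ and no element of $\mathcal L$ lies strictly between $\tau_j+R^2$ and $\tau_{j+1}$, the assumption $|\mathcal L|>n/2$ forces at least $m:=\Theta(n/R^2)=\Theta(n^{1/3})$ of the $\tau_j$ to satisfy $\tau_j\le n-R^2$, whence the windows $[\tau_j,\tau_j+R^2]$, $1\le j\le m$, are disjoint subsets of $\{0,\dots,n\}$. On $\{W\ge 0\}$ one has $W_{\tau_j}\in[0,R]$; since the increments of $W$ over $[\tau_j,\tau_j+R^2]$ have total mean $O(R^2/n)=o(1)$ and variance of order $R^2$, the uniform CLT for the tilted increments gives $\widetilde P(W_{\tau_j+R^2}<0\mid\calF_{\tau_j})\ge c_3>0$ uniformly, so $\widetilde P(W\ge 0\text{ on }[\tau_j,\tau_j+R^2]\mid\calF_{\tau_j})\le 1-c_3$. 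Iterating the strong Markov property over these $m$ disjoint windows,
\begin{equation*}
  \widetilde P\bigl(\{W\ge 0\text{ on }\{0,\dots,n\}\}\cap\{|\mathcal L|>n/2\}\bigr)\le(1-c_3)^m=e^{-\Theta(n^{1/3})}.
\end{equation*}
Combining the two regimes gives $Q_n\le e^{-\Theta(n^{1/3})}$, which together with Step~1 proves the theorem.

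\emph{Expected main obstacle.} Step~1 is largely bookkeeping — the summation by parts, the Riemann-sum accounting that pins the exponent to $n\int_0^1 I(h'(s))\,ds+O(1)$, and the care needed for the moving lattice $\Z-h_n(k)$ and the endpoint $r_n$. The substance is Step~2, and the genuine work is to establish, \emph{uniformly in $n$ and in the time window considered}, the two probabilistic inputs: the strip-confinement lower bound $e^{-\Theta(n/R^2)}$ and the ``escape below $0$'' lower bound $c_3>0$. Both amount to checking that the relevant (local) CLT estimates for the time-inhomogeneous tilted walk do not degrade as $n\to\infty$; the key quantitative point there is that, although the tilted increments of $S$ carry $\Theta(1)$ drift, those of $W=S-h_n$ carry drift only $O(1/n)$, which is negligible on the scale $R^2=n^{2/3}$ of the windows.
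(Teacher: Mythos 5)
Your Step~1 is the same change of measure as the paper's Lemma~\ref{lem:large_dev_extraction} (same summation by parts and Riemann-sum accounting), with the harmless cosmetic difference that you tilt by $(H')^{-1}(h'(k/n))$ rather than by $(H')^{-1}(h_n(k)-h_n(k-1))$, leaving a residual $O(1/n)$ per-step drift in $W$, which you correctly note is negligible. Step~2a is the same strip-confinement argument as Lemma~\ref{lem:thm_unif_curv_free_energy:LB}. Step~2b reaches the same conclusion as Lemmas~\ref{lem:strd_time_kernel:UB}--\ref{lem:thm_unif_curv_free_energy:UB} but organizes the work differently: the paper proves a \emph{deterministic}, window-by-window bound $\sum_y\kerPF^n_{l,k}(x,y)\le e^{-c}$ over any interval of length $\asymp n^{2/3}$ and any starting height $x\ge 0$ (the dichotomy $x\lessgtr Kn^{1/3}$ appears inside Lemma~\ref{lem:strd_time_kernel:UB}), and then takes a product over $\lfloor n^{1/3}\rfloor$ fixed windows; you instead make the dichotomy global on the trajectory ($\abs{\mathcal{L}}\lessgtr n/2$) and, in the ``many low points'' case, locate $\Theta(n^{1/3})$ disjoint $R^2$-windows via adaptively chosen stopping times $\tau_j$ and iterate the strong Markov property. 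Both routes rest on the same two mechanisms (positive probability of escape below $0$ from a height $O(n^{1/3})$, and accumulated area cost when heights exceed $n^{1/3}$), and both outsource the same inputs (the strip-confinement lower bound and a uniform CLT/Berry--Esseen estimate for the slowly-varying tilted increments). The paper's fixed-window formulation is a bit cleaner because it avoids the stopping-time bookkeeping and the measurability care needed to iterate under the non-adapted event $\{\abs{\mathcal{L}}>n/2\}$ (you do need to pass from $\{\abs{\mathcal{L}}>n/2\}$ to the nested, adapted events $\{\tau_j\le n-R^2\}$, as you gesture at); conversely your version makes it more transparent that both mechanisms are genuinely needed. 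One small imprecision: with $h\in C^2$ only, one gets $\lambda_{k+1}-\lambda_k=\tfrac1n\lambda'(k/n)+o(1/n)$ uniformly in $k$, not $O(1/n^2)$ (which would require $h\in C^3$); this does not affect the uniform two-sided bound $\mu_1/n\le\mu_k\le\mu_2/n$ you actually use.
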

\begin{proof}
    The proof is given in Section~\ref{subsec:prf:thm:unif_curv_free_energy}. The factor \(e^{n\int_{0}^1 I(h'(s))ds}\) is extracted in Lemma~\ref{lem:large_dev_extraction}. The lower bound on the correction is proved in Lemma~\ref{lem:thm_unif_curv_free_energy:LB} and the upper bound in Lemma~\ref{lem:thm_unif_curv_free_energy:UB}.
\end{proof}

\begin{remark}
	Let us explain the condition \(\mathrm{Image}( h')\subset H_X'([a,b])\) in the case of a bounded random variable (e.g.\ uniform on \(\{-1,0,1\}\)). In that case, \(-a_*=b_*=+\infty\), and \(\lim_{t\to \pm\infty} H_X'(t)\) gives the maximal/minimal allowed slopes for the increments (\(1,-1\) in the case of the uniform over \(\{-1,0,1\}\)). The condition that \(h'\) stays away from these values is necessary to avoid ``freezing'' phenomena (for example, conditioning the walk with steps uniform in \(\{-1,0,1\}\) to stay above the identity function gives a unique possible trajectory without fluctuations).
\end{remark}

Note that the large deviation part of our theorem can be obtained using Mogulskii's Theorem, and holds for absolutely continuous \(h\). As explained below, the hypotheses of \(h\) being \(C^2\) cannot be loosened, if we wish the correction to remain of order \(n^{1/3}\).

\subsubsection{Trajectorial estimates for the conditioned random walk}
Our next results describe the statistical properties of the trajectories of the random walk conditioned to stay above the obstacle. We derive in particular the exponents discussed in the introduction.

To lighten the notation, let us denote the conditional measure by \[P_{0}^{h_n} = P( \cdot \given S_0=S_n =0, S\geq h_n)\] and introduce the rescaled process \(\rS_k = n^{-1/3}(S_k - h_n(k))\).
\begin{theorem}
\label{thm:unif_curv_typ_height_fluctuations}
    Assume the same setting as in Theorem~\ref{thm:unif_curv_free_energy}.
    Let \(\beta\in (0,1/6)\).
    With the same setup as in Theorem~\ref{thm:unif_curv_free_energy}, there are \(n_0,\lambda_0,r_0,C,c_-,c_+>0\) such that, for any \(n\geq n_0\), any \(\lambda\in[\lambda_0,n^\beta]\) and any \(k\in \{n-r_0\sqrt{\lambda}n^{2/3}, \dots, n+r_0\sqrt{\lambda}n^{2/3}\}\),
    \[
        C^{-1}e^{-c_- \lambda^{3/2}}
		\leq P_{0}^{h_n}\bigl(\rS_k \geq \lambda \bigr) \leq Ce^{-c_+ \lambda^{3/2}}.
    \]
    Moreover, there exists \(c>0\) such that
    \[
        \forall \lambda>n^{\beta},\qquad
        P_{0}^{h_n}\bigl(\rS_k \geq \lambda \bigr) \leq e^{-c \lambda}.
    \]
    In particular, for any \(r\geq 0\), there exist \(K_+(r) \geq K_-(r)>0\) such that, for all \(n\geq n_0\),
    \[
        K_-(r) \leq E_0^{h_n}(\rS_k^r) \leq K_+(r)
        \qquad\text{and}\qquad
        K_-(2) \leq \Var_{P_0^{h_n}}(\rS_k) \leq K_+(2).
    \]
\end{theorem}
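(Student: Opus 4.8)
The plan is to establish the one-point tail bounds first, and then derive the moment and variance bounds as straightforward consequences by integrating the tails. Let me think about the structure.

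The core claim is the two-sided bound $C^{-1}e^{-c_-\lambda^{3/2}} \le P_0^{h_n}(\mathfrak{s}_k \ge \lambda) \le C e^{-c_+\lambda^{3/2}}$ for $\lambda \in [\lambda_0, n^\beta]$ and $k$ within $r_0\sqrt{\lambda}n^{2/3}$ of $n$. Let me think about where this comes from.

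The key idea is the "reduction procedure" the authors mentioned: reformulating the problem of a random walk above a concave obstacle $h_n$ as a time-inhomogeneous random walk conditioned to stay positive under a time-inhomogeneous tilt/potential. Via this, the event $\{S_k \ge h_n(k) + \lambda n^{1/3}\}$ corresponds to requiring the "straightened" walk to reach height $\ge \lambda n^{1/3}$ at time $k$.

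Now for the **upper bound** on the tail: the strategy is an entropic/energy argument. Forcing $S_k \ge h_n(k) + \lambda n^{1/3}$ requires the walk to climb up to that height and come back down, and because $h$ is strictly concave, the walk pays a cost in large-deviation rate on the intervals near $k$ where the slope is forced away from the optimal profile $h'$. Comparing the cost $\int I(h'_{\mathrm{forced}}) - \int I(h')$ to the baseline. The quadratic behavior of $I$ near its minimum slope (since $I'' = 1/H'' > 0$) combined with the fact that to gain height $\lambda n^{1/3}$ over a time window of length $\ell$ the walk must deviate its slope by $\sim \lambda n^{1/3}/\ell$, gives a cost $\sim \ell \cdot (\lambda n^{1/3}/\ell)^2 = \lambda^2 n^{2/3}/\ell$; the concavity of $h$ contributes a gain $-\Theta(\ell^3/n)$ from the obstacle being pushed away (so that the walk does not need to come back so sharply), wait — actually the relevant optimization: to lift the walk by $\lambda n^{1/3}$ and bring it back costs, after optimizing over $\ell$, a quantity of order $\lambda^{3/2}$ when $\ell \sim \sqrt{\lambda}n^{2/3}$. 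This optimal window length $\ell \sim \sqrt{\lambda}n^{2/3}$ is precisely why the statement restricts $k$ to be within $r_0\sqrt{\lambda}n^{2/3}$ of $n$ — otherwise the denominator $P(S \ge h_n, S_n = \cdots)$ is not controlled on the relevant scale. Concretely, I would: (i) use the reduction to reformulate; (ii) split according to the first and last times the straightened walk is above level $\lambda n^{1/3}$, say at times $k-\ell_1$ and $k+\ell_2$; (iii) on each of the two pieces, use the FKG/restriction and a Mogulskii-type large-deviation upper bound for the probability that a random walk bridge stays positive and reaches the required heights, using convexity of $I$ to get the Gaussian-type cost $\lambda^2 n^{2/3}/\ell_i$; (iv) combine with the denominator estimate from Theorem \ref{thm:unif_curv_free_energy} (or rather a local version of it that gives the free energy with endpoints shifted), and optimize over $\ell_1, \ell_2$. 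The condition $\lambda < n^\beta$ with $\beta < 1/6$ ensures $\ell \sim \sqrt{\lambda}n^{2/3} \ll n$, so the optimal window stays inside $[0,n]$ and the curvature is essentially constant there.

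For the **lower bound** on the tail: I would exhibit an explicit strategy — force the straightened walk to follow a deterministic "bump" of height $\lambda n^{1/3}$ and width $\sim \sqrt{\lambda}n^{2/3}$ centered near $k$, staying above $h_n$ throughout, and estimate its probability from below. The cost of this bump relative to the unconstrained optimum is again $\Theta(\lambda^{3/2})$ by the same optimization, and the probability that the walk stays within a tube around this bump is bounded below using a standard local CLT / invariance-principle argument for bridges conditioned to stay positive (the tube has width comparable to $n^{1/3}$, the natural transversal scale, so this contributes only a constant). Dividing by the denominator gives the matching lower bound. The tail bound $P_0^{h_n}(\mathfrak{s}_k \ge \lambda) \le e^{-c\lambda}$ for $\lambda > n^\beta$ is coarser: here I would not optimize the window but instead take $\ell \sim n$ (the whole strip) and use that lifting the walk by $\lambda n^{1/3}$ over a length-$n$ window costs at least $\Theta(\lambda^2 n^{2/3}/n) = \Theta(\lambda^2 n^{-1/3})$ in rate; for $\lambda > n^\beta$ with $\beta > 0$ this exceeds $c\lambda$ eventually, and one also uses the exponential-moment assumption \eqref{eq:conditions_steps} to control the genuinely large deviations where the walk makes an enormous excursion — a direct union bound / Chernoff estimate on the increments handles $\lambda$ comparable to $n^{2/3}$ and beyond.

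Finally, the **moment and variance bounds**: for the upper bounds, $E_0^{h_n}(\mathfrak{s}_k^r) = \int_0^\infty r\lambda^{r-1} P_0^{h_n}(\mathfrak{s}_k \ge \lambda)\,d\lambda$ — wait, I must be careful that $\mathfrak{s}_k$ can be slightly negative (since $S_k \ge h_n(k)$ forces $S_k \ge \lceil h_n(k)\rceil$ only at integer points, but actually $S \ge h_n$ already gives $\mathfrak{s}_k \ge 0$ up to the ceiling rounding, so $\mathfrak{s}_k \ge -n^{-1/3} \to 0$); so split the integral at $\lambda_0$: the contribution from $[0,\lambda_0]$ is $O(1)$ trivially, the contribution from $[\lambda_0, n^\beta]$ is $\int_{\lambda_0}^{n^\beta} r\lambda^{r-1}Ce^{-c_+\lambda^{3/2}}d\lambda = O(1)$ by convergence of the integral, and the contribution from $[n^\beta,\infty)$ is $\int_{n^\beta}^\infty r\lambda^{r-1}e^{-c\lambda}d\lambda$ which is super-polynomially small in $n$, hence $O(1)$. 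This gives $E_0^{h_n}(\mathfrak{s}_k^r) \le K_+(r)$. For the lower bound on $E_0^{h_n}(\mathfrak{s}_k^r)$ when $r > 0$: $E_0^{h_n}(\mathfrak{s}_k^r) \ge \lambda_1^r P_0^{h_n}(\mathfrak{s}_k \ge \lambda_1) \ge \lambda_1^r C^{-1}e^{-c_-\lambda_1^{3/2}} =: K_-(r) > 0$ for any fixed $\lambda_1 \ge \lambda_0$ (and the $r=0$ case is trivial since the probability is $1$). For the variance: $\Var_{P_0^{h_n}}(\mathfrak{s}_k) = E_0^{h_n}(\mathfrak{s}_k^2) - (E_0^{h_n}(\mathfrak{s}_k))^2 \le E_0^{h_n}(\mathfrak{s}_k^2) \le K_+(2)$ for the upper bound; for the lower bound one needs an anti-concentration estimate — it is not enough that the second moment is bounded below, since the first moment could a priori carry all of it. Here I would argue that the conditional law of $\mathfrak{s}_k$ has "spread" of order $1$: e.g., by a two-sided comparison showing $P_0^{h_n}(\mathfrak{s}_k \le \lambda_0/2) \ge c' > 0$ as well (the walk can dip down to near the floor with positive probability, again by an explicit strategy and a local CLT), together with $P_0^{h_n}(\mathfrak{s}_k \ge 2\lambda_0) \ge c'' > 0$ from the tail lower bound; these two events force $\Var \ge c'''(\lambda_0)^2 > 0$. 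The **main obstacle** I anticipate is making the reduction procedure and the attendant local free-energy estimates (the denominator with shifted endpoints, uniformly over the relevant range of $k$ and $\lambda$) sufficiently precise and uniform — in particular, controlling the large-deviation cost of the constrained bridges with the correct $\lambda^{3/2}$ exponent and matching constants on both sides, and ensuring the curvature of $h$ can be treated as locally constant over windows of length $\sqrt{\lambda}n^{2/3}$; the anti-concentration lower bound on the variance is a secondary but genuine difficulty requiring its own small-ball argument rather than following formally from the tail bounds.
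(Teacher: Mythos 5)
Your outline for the tail bounds is in the same spirit as the paper's: both exploit the reduction to a tilted walk conditioned to stay positive under a potential $\sim \sum \alpha_k Z_k/n$, both identify the optimal window scale $\sqrt{\lambda}\,n^{2/3}$ as the source of the $\lambda^{3/2}$ exponent, both handle $\lambda > n^\beta$ by a coarser linear-rate argument using the exponential moments, and both correctly locate the role of $\beta < 1/6$. Where the paper is more specific is in the decomposition: rather than splitting on the first/last times the walk is \emph{high}, the paper conditions on the walk being \emph{low} (below a level $\sim \lambda n^{1/3}$) somewhere in neighboring windows of length $\Theta(\sqrt{\lambda}n^{2/3})$ and then rewrites the conditional probability as a local ratio of kernels $\kerPF_{s,t}^n(x,y)$; the unconditional bound on being high over a whole window (Lemma~\ref{lem:high_exc_UB}) is obtained by a dyadic doubling recursion, a mechanism absent from your sketch. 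These details are where the uniformity over $k,\lambda$ and the matching constants come from, so your proposal would need to be filled in substantially, but the strategy is compatible.

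The one place you genuinely diverge is the variance lower bound. You correctly identify that the second-moment lower bound does not by itself bound the variance below, and you propose a small-ball/anti-concentration estimate $P_0^{h_n}(\rS_k \le \lambda_0/2) \ge c'$ to complement the tail lower bound; you also flag that this requires its own argument. The paper sidesteps the entire issue with a one-line trick: Chebyshev's inequality gives $P_0^{h_n}\bigl(\rS_k\geq E_0^{h_n}(\rS_k) + 1\bigr) \leq \Var_{P_0^{h_n}}(\rS_k)$, and since the first moment $E_0^{h_n}(\rS_k)$ is bounded above uniformly (already established), the tail lower bound applied at $\lambda = E_0^{h_n}(\rS_k)+1$ (or at $\lambda_0$ if that quantity is below $\lambda_0$) bounds the left-hand side below by a constant. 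Thus the variance lower bound follows formally from the tail lower bound and the moment upper bound, with no anti-concentration needed. Your route would also work, but it is strictly more labor for the same conclusion; the Chebyshev observation is the cleaner argument and worth retaining.
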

\begin{remark}
    Note that this universal behavior of the tails is only valid in a restricted regime. Deeper in the tails, universality is lost and the precise behavior depends on the specificities of the random walk transition probabilities. For instance, it is easy to check (see the proof of Theorem~\ref{thm:xp} for an example) that the tails stated above remain true for any \(\lambda\) in the case of a Gaussian random walk, while they are obviously incorrect for a random walk with bounded steps (since the probability of a sufficiently large deviation is identically \(0\) in that case).
\end{remark}
\begin{proof}
    The proof is given in Section~\ref{subsec:dev_proba_moments}. The lower bound on \(P_{0}^{h_n}\bigl(\rS_0 \geq \lambda \bigr)\) is proved in Lemma~\ref{lem:dev_proba_LB}. The upper bounds can be found in Lemma~\ref{lem:dev_proba_UB}.
    The bounds on the moments and variances follow easily.
    Indeed, on the one hand, for \(n\geq n_0\),
    \[
        E_0^{h_n}(\rS_k^r)
        \geq
        \lambda_0^r P_0^{h_n}(\rS_k\geq \lambda_0)
        \geq
        C^{-1}\lambda_0^r e^{-c_-\lambda_0^{3/2}}
        >0
    \]
    and
    \begin{align*}
        E_0^{h_n}(\rS_k^r)
        &=
        \int_{0}^{\infty} P_0^{h_n}(\rS_k^r\geq \lambda) \,d\lambda \\
        &\leq
        \lambda_0^r + \int_{\lambda_0^{r}}^{n^\beta} P_0^{h_n}(\rS_k\geq \lambda^{1/r}) \,d\lambda + \int_{n^\beta}^{\infty} P_0^{h_n}(\rS_k\geq \lambda^{1/r}) \,d\lambda \\
        &\leq
        \lambda_0^r + C\int_{\lambda_0^r}^{\infty} e^{-c_+\lambda^{3/2}} \,d\lambda + \int_{n^\beta}^{\infty} e^{-c\lambda} \,d\lambda
        < \infty.
    \end{align*}
    On the other hand,
    \[
        C^{-1}e^{-c_-(E_0^{h_n}(\rS_k) + 1)^{3/2}}
        \leq
        P_0^{h_n}(\rS_k\geq E_0^{h_n}(\rS_k) + 1)
        \leq
        \Var_{P_0^{h_n}}(\rS_k)
        \leq
        E_0^{h_n}(\rS_k^2),
    \]
    and the conclusion follows from the previous bounds for all \(n\geq n_0\).
\end{proof}

\begin{theorem}
\label{thm:Covariance_decay}
    Assume the same setting as in Theorem~\ref{thm:unif_curv_free_energy}.
    There exist \(n_0\geq 0, r_0\geq 0\), \(c,C>0\) such that, for any \(n\geq n_0\) and any \(1\leq i<j\leq n\) with \(i-j\geq r_0 n^{2/3}\),
    \begin{equation*}
        \babs{ \Cov_{P_{0}^{h_n}}(\rS_i,\rS_j) } \leq C\exp(-c\tfrac{|j-i|}{n^{2/3}}).
    \end{equation*}
\end{theorem}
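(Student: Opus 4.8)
The plan is to recast the covariance decay as \emph{geometric ergodicity of a coarse‑grained Markov chain}: I would observe the conditioned walk on a mesh of times spaced $\asymp n^{2/3}$ apart, show that this mesh chain contracts at a rate $\rho<1$ uniform in $n$ per step (a Doeblin‑type minorization on a ``typical band'' at each block, together with a Lyapunov estimate taming excursions high above the floor), and read off $|\Cov_{P_0^{h_n}}(\rS_i,\rS_j)|\leq Ce^{-c(j-i)/n^{2/3}}$.

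Concretely, fix $i<j$ with $j-i\geq r_0n^{2/3}$ and set $m=\lfloor(i+j)/2\rfloor$, so $\min(m-i,j-m)\geq\tfrac14(j-i)$. A deterministic shift leaves covariances unchanged, so $\Cov_{P_0^{h_n}}(\rS_i,\rS_j)=n^{-2/3}\Cov_{P_0^{h_n}}(S_i,S_j)$. Under $P_0^{h_n}$, conditionally on $S_m$ the two halves of the path are independent (Markov property, plus the factorization of $\{S\geq h_n\}$ and of the endpoint pinning across time $m$); since $\rS_i$ is $\sigma(S_0,\dots,S_m)$‑measurable and $\rS_j$ is $\sigma(S_m,\dots,S_n)$‑measurable, they are conditionally independent given $S_m$. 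Hence, with $\mu_m$ the law of $S_m$ and $\phi_i(x)=E_0^{h_n}(\rS_i\given S_m=x)$, $\psi_j(x)=E_0^{h_n}(\rS_j\given S_m=x)$, Cauchy--Schwarz gives $|\Cov_{P_0^{h_n}}(\rS_i,\rS_j)|\leq\sqrt{\Var_{\mu_m}(\phi_i)}\sqrt{\Var_{\mu_m}(\psi_j)}$, so it suffices to prove $\Var_{\mu_m}(\phi_i)\leq Ce^{-c(m-i)/n^{2/3}}$ (and symmetrically for $\psi_j$). Introduce the mesh chain $Y_0=S_m$, $Y_\ell=S_{m-\ell\Delta}$ for $\Delta\asymp n^{2/3}$, with $Y_L=S_i$ and $L=\lceil(m-i)/\Delta\rceil$ (the last block possibly shorter); under $P_0^{h_n}$ this is a time‑inhomogeneous Markov chain, and $\phi_i=P_1\cdots P_L f$ with $f(y)=n^{-1/3}(y-h_n(i))\geq 0$ and $P_\ell$ the transition kernel $Y_{\ell-1}\to Y_\ell$.

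The crux is that this mesh chain is \emph{uniformly geometrically ergodic}. I would establish, with constants independent of $n$: (i) a \emph{minorization} --- there are $\Delta\asymp n^{2/3}$, $\varepsilon>0$ and reference probability measures $\nu_\ell$ supported on the typical bands $[h_n(\cdot),h_n(\cdot)+Cn^{1/3}]$ with $P_\ell(y,\cdot)\geq\varepsilon\,\nu_\ell(\cdot)$ for every $y$ in the typical band and every bulk $\ell$; and (ii) a \emph{drift estimate} --- for $V(y)=e^{\alpha\rS_y}$ with $\alpha$ small enough, $E_0^{h_n}(V(Y_{\ell+1})\given Y_\ell=y)\leq\gamma V(y)+b\,\mathds{1}_{\{y\text{ in the typical band}\}}$ with $\gamma<1$, expressing that from high above the floor the conditioned walk is pushed back down by a fixed amount per block. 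By the (time‑inhomogeneous) drift‑and‑minorization argument, (i) and (ii) give $|P_1\cdots P_\ell f(y)-P_1\cdots P_\ell f(y')|\leq C\rho^{\ell}(V(y)+V(y'))\norm{f}_V$ with $\rho<1$ uniform in $n$; since $\norm{f}_V<\infty$ and, by the tail bounds of Theorem~\ref{thm:unif_curv_typ_height_fluctuations} (which, by the same arguments, hold throughout the bulk), $E_{\mu_m}(V^2)=O(1)$ uniformly in $n$, this yields $\Var_{\mu_m}(\phi_i)=\Var_{\mu_m}(P_1\cdots P_L f)\leq C\rho^{L}$. Proving (i) and (ii) \emph{with constants uniform in $n$} is the main obstacle: after the reduction of the introduction (to a time‑inhomogeneous walk conditioned to stay nonnegative under a locally affine potential, at the natural $n^{1/3}$ and $n^{2/3}$ scales), both reduce to combining a local central limit theorem with barrier estimates for a walk staying above an almost‑affine floor over an $\asymp n^{2/3}$‑long window --- precisely the ingredients behind Theorems~\ref{thm:unif_curv_free_energy} and~\ref{thm:unif_curv_typ_height_fluctuations} --- but one must track that the resulting constants do not degrade as $n\to\infty$.

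Combining $\Var_{\mu_m}(\phi_i)\leq C\rho^{L}$ with $L\geq(m-i)/\Delta\geq c(j-i)/n^{2/3}$, the symmetric bound for $\psi_j$, and the Cauchy--Schwarz estimate above, one gets $|\Cov_{P_0^{h_n}}(\rS_i,\rS_j)|\leq Ce^{-c(j-i)/n^{2/3}}$. The hypothesis $j-i\geq r_0n^{2/3}$ (together with $1\leq i<j\leq n$) is exactly what keeps the midpoint $m$ and all mesh blocks in the bulk where the one‑point, minorization, and drift estimates apply; within $O(n^{2/3})$ of the endpoints the walk is essentially pinned and the bound is immediate.
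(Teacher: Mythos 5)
Your decomposition is valid and the strategy is genuinely different from the paper's. The paper (Lemma~\ref{lem:covariances_UB}) works with the reduced field $W$ and an independent copy $W'$, writes $2\Cov(W_i,W_j)=E\bigl((W_i-W_i')(W_j-W_j')\prod_{k=i}^j\mathds{1}_{W_k\neq W_k'}\bigr)$ using the Markov property at the first meeting time, and so reduces the task to showing that two independent conditioned paths meet within $[i,j]$ except with probability $e^{-c|j-i|/n^{2/3}}$; that non-intersection estimate is then proved in two steps (a proportion of the $n^{2/3}$-blocks see both walks come down to $O(n^{1/3})$ via Lemma~\ref{lem:high_exc_UB}, and conditionally on both being low over such a block they cross with positive probability, by a second-moment bound on the number of intersections). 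Your route instead conditions on $S_m$ at the midpoint, uses conditional independence of the two halves and Cauchy--Schwarz to reduce to $\Var_{\mu_m}\bigl(E(\rS_i\given S_m)\bigr)$, and would get geometric decay of that variance from geometric ergodicity of a coarse-grained mesh chain via a Doeblin minorization and a Lyapunov drift. Both are legitimate ways to extract a correlation length; the paper's coupling argument is closer to the barrier machinery already set up, while yours is more modular and directly produces contraction of conditional expectations.

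What you have, however, is an outline: the entire weight falls on the minorization (i) and drift (ii) with constants uniform in $n$, which you flag but do not carry out, and which is comparable in difficulty to the paper's second-moment intersection argument. Two places need particular care. First, the mesh chain $Y_\ell=S_{m-\ell\Delta}$ under the bridge conditioning has kernels $P_\ell$ that already encode the conditioning $\{S_0=0,\, S\geq h_n\text{ on }[0,m]\}$ and the pinning at time $0$; the minorization must be stated for these bridge kernels rather than a free $\Delta$-step kernel, and the reference measures $\nu_\ell$ must be allowed to vary with $\ell$, since the obstacle, the local potential strength $\alpha_k$, and the distance to the pinned endpoint all change along the chain. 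Second, the Lyapunov function must be recentered at each level, $V_\ell(y)=\exp\bigl(\alpha n^{-1/3}(y-h_n(m-\ell\Delta))\bigr)$; then $\|f\|_{V_L}$ is uniformly bounded and $E_{\mu_m}(V_0^2)=O(1)$ follows from Theorem~\ref{thm:unif_curv_typ_height_fluctuations}, as you note. A minor quantitative remark: the $V$-norm contraction $\lvert P_1\cdots P_L f(y)-P_1\cdots P_L f(y')\rvert\leq C\rho^L(V(y)+V(y'))\|f\|_V$ actually yields $\Var_{\mu_m}(\phi_i)\leq C\rho^{2L}$, which only improves your constant.
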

\begin{proof}
    The proof is given in Section~\ref{subsec:covar_decay}.
\end{proof}

\subsubsection{Relaxing the assumptions on the obstacle}

First note that the hypotheses on the random walk, in particular the existence of exponential moments, are necessary for our results. Indeed, large deviations for subexponential tails present totally different trajectorial behavior.

\medskip
One might however wonder what changes if the assumptions on the obstacle (strong concavity, \(C^2\)-regularity) are relaxed. Our next result shows that relaxing these assumption even at a single point leads to the exponents derived above ceasing to be valid in general
(except for the correlation length; the change being local, its effect on the latter is not particularly relevant).
As we are not aiming at a complete theory in this extended setting, we only discuss the technically simpler class of Gaussian random walks, conditioned to stay above an obstacle of the form \(h:[-1,1]\to\R\), \(h(x)=1-\abs{x}^p\), where \(p\geq 1\). Observe that \(h\) is not strongly concave at \(0\) when \(p>2\), while \(h\) is not \(C^2\) at \(0\) when \(p<2\), so that this class of obstacles allows one to analyze the effect of relaxing both types of assumptions.

\begin{theorem}
\label{thm:xp}
    Let \(\beta>0\). Let \((X_k)_{k\in\Z}\) be an i.i.d.\ sequence of \(\calN(0,\beta)\) random variables and \(S\) the associated random walk. Let \(p\geq 1\), and let \(h:[-1,1]\to [0,1]\) and \(\alpha_p\geq 0\) be given by
    \[
    	h(x) = 1-\abs{x}^p,\; h_n(k) = nh(k/n), \text{ and }\alpha_p = \frac{p-1}{2p-1}.
	\]
    Let \(\{S\geq h_n\} = \{S_k\geq h_n(k)\ \forall k =-n,\dots,n\}\), \(P_{0}^{h_n} = P(\cdot \given S_{-n} = S_n = 0, S\geq h_n)\) and \(\rS_0 = n^{-\alpha_p}(S_0-n)\).
    There exist \(C,c,c_-,c_+\in (0,+\infty)\), \(n_0\geq 1\), such that, for any \(n\geq n_0\) and \(\lambda\geq 1\),
    \[
        C^{-1}e^{-c_- \lambda^{(2p-1)/p}}
        \leq P_{0}^{h_n}\bigl(\rS_0 \geq \lambda \bigr)
        \leq Ce^{-c_+ \lambda^{(2p-1)/p}}.
    \]
    In particular, for all \(p\geq 0\) and \(r\geq 0\), there exist \(K_-\) and \(K_+\) such that, for any \(n\geq n_0\),
    \begin{gather*}
        K_- \leq E_0^{h_n}\bigl(\rS_0^r\bigr) \leq K_+
        \qquad\text{and}\qquad
        K_- \leq \Var_{P_{0}^{h_n}}(\rS_0) \leq K_+.
    \end{gather*}
\end{theorem}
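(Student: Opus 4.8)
The Gaussian case affords an exact change of variables. Writing $S_k = h_n(k) + Y_k$, staying above $h_n$ becomes $\{Y_k \geq 0\ \forall k\}$, and the density of the increments of $Y$ picks up a deterministic drift proportional to the discrete second difference of $h_n$. Since $h_n(k) = n\,h(k/n)$ and $h\in C^2$ away from $0$, one has $\Delta^2 h_n(k) = n^{-1} h''(k/n) + \text{(lower order)}$, so the reduced process is a time-inhomogeneous random walk conditioned to stay positive under an external potential that is, near a macroscopic point, essentially $V_n(k) = -n^{-1} h''(k/n)\cdot Y_k$. For $h(x) = 1-|x|^p$ with $p>1$ this curvature is $h''(x) = -p(p-1)|x|^{p-2}$, which at the relevant scale near the origin forces the correct sub-linear exponent: the optimal window over which the walk profits from bulging up is of width $\ell\asymp n^{\theta}$ for some $\theta$ and height $H\asymp n^{\alpha_p}$, and these scales are pinned by balancing the Brownian-bridge entropy cost $H^2/\ell$ against the curvature-penalty cost $\int_{|k|\le \ell} (-h''(k/n))\,H\,dk \asymp n^{-1}\ell^{p-1}H$. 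Optimizing $H^2/\ell \asymp n^{-1}\ell^{p-1}H$ together with the self-consistency $H\asymp$ the typical fluctuation on that window gives $\ell\asymp n^{p/(2p-1)}$ and $H\asymp n^{(p-1)/(2p-1)}=n^{\alpha_p}$, which is exactly the claimed rescaling. The case $p=1$ (so $\alpha_1=0$, no rescaling) and the genuinely singular behavior at $0$ for $p<2$ must be handled separately, but in the Gaussian setting the computations stay explicit.

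First I would reduce to a clean one-dimensional estimate. Conditioning on the values $S_{-m}$ and $S_{m}$ at the endpoints of a mesoscopic block of size $m\asymp n^{p/(2p-1)}$ around $0$ and using the Markov property, the law of $(S_k)_{-m\le k\le m}$ given the positivity constraint only on that block is a Gaussian bridge tilted by the obstacle; the contribution of the constraint outside the block is controlled by Theorem~\ref{thm:unif_curv_free_energy} applied to the two smooth concave pieces away from $0$ (these have $h''<0$ bounded away from $0$ on compacts), so it only contributes a bounded multiplicative factor and the endpoint heights $S_{\pm m} - h_n(\pm m)$ are of order $m^{1/2}\asymp n^{\alpha_p}$ with good tails. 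What remains is: for a Brownian bridge $B$ on $[-m,m]$ with endpoints at height $\asymp n^{\alpha_p}$, conditioned to satisfy $B_k \ge h_n(k)$ throughout, estimate $P(B_0 - n \ge \lambda n^{\alpha_p})$.

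For that core estimate I would use Gaussian correlation/FKG-type inequalities for the upper bound and an explicit trajectory for the lower bound. Upper bound: shift the bridge down by the candidate profile $\phi_\lambda(k) = h_n(k) + \lambda n^{\alpha_p}\psi(k/m)$ for a fixed smooth bump $\psi$ with $\psi(0)=1$ supported in $[-1,1]$; the event $\{B_0 \ge n + \lambda n^{\alpha_p}\}\cap\{B\ge h_n\}$ is contained, after a Cameron--Martin shift by $\phi_\lambda - h_n$, in an event whose Gaussian cost is the Cameron--Martin norm $\tfrac12\|\phi_\lambda'-h_n'\|_{L^2}^2$ plus a boundary term; by the scaling computed above this norm is $\Theta(\lambda^2 \cdot n^{2\alpha_p}/m) = \Theta(\lambda^2 n^{2\alpha_p - p/(2p-1)})$, and one checks $2\alpha_p - p/(2p-1) = -(2p-1)/p \cdot \alpha_p/\dots$; more directly, plugging $\alpha_p=(p-1)/(2p-1)$ and $m\asymp n^{p/(2p-1)}$ gives exponent $(2p-1)/p$ in $\lambda$, which is the asserted rate. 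The curvature term $\int (-h_n'')\,(\phi_\lambda - h_n)$ is of the same order and has the right sign, so it only helps. Lower bound: exhibit the Brownian bridge that follows $\phi_\lambda$ exactly; the probability that $B$ stays within an $O(1)$-tube around a fixed smooth profile on an interval of length $m$, while pinned at both ends, is $e^{-O(m)}=e^{-O(n^{p/(2p-1)})}$ — but this must be compared against the (equally exponentially small in $n$) denominator $P(B\ge h_n)$, and the ratio is what produces $e^{-c_-\lambda^{(2p-1)/p}}$; carrying this out requires the free-energy asymptotics of Theorem~\ref{thm:unif_curv_free_energy} (the $e^{-\Theta(n^{1/3})}$ bound for the $p=2$ reference, and its analogue/adaptation here) so that the block-outside and block-inside contributions match at leading exponential order and only the $\lambda$-dependent surplus survives. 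Finally, the moment and variance bounds follow from the one-point tails by the same layer-cake argument already displayed in the proof of Theorem~\ref{thm:unif_curv_typ_height_fluctuations}, now with the stretched-exponential tail $e^{-c\lambda^{(2p-1)/p}}$ which is integrable for every $p\ge1$ since $(2p-1)/p\ge1$.

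The main obstacle will be making the block-decoupling quantitatively honest: one needs that restricting the positivity constraint to a mesoscopic window of the optimal size $n^{p/(2p-1)}$ changes neither the normalization nor the conditional law of $S_0$ by more than constants, and that the endpoint heights at $\pm m$ genuinely sit at scale $n^{\alpha_p}$ with Gaussian-type tails rather than being pushed up or pinned down by the far-away part of the obstacle. In the Gaussian setting this is tractable via the exact change of variables plus monotonicity (raising the floor can only decrease the partition function and, by FKG, stochastically pushes the walk up), but the two-sided matching of exponents — ensuring the $n^{1/3}$-type corrections from the smooth flanks cancel in the ratio and do not contaminate the $\lambda^{(2p-1)/p}$ term — is the delicate bookkeeping step, and is exactly where Theorem~\ref{thm:unif_curv_free_energy} (and a localized variant of it near the singular point) is indispensable.
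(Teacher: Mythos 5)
Your overall strategy — change of variables, Cameron--Martin / potential reduction, a mesoscopic window around the singular point, FKG/monotonicity — is the right family of ideas, but the central quantitative step contains errors that would derail the argument.

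First, your heuristic for the window scale is internally inconsistent. The effective potential is $-n^{-1}h''(k/n)\,Y_k$ with $h''(x)=-p(p-1)\abs{x}^{p-2}$, so the curvature cost over a window $\{\abs{k}\le\ell\}$ at height $H$ is $\sum_{\abs{k}\le\ell} n^{-1}\abs{k/n}^{p-2}H \asymp n^{1-p}\ell^{p-1}H$, not $n^{-1}\ell^{p-1}H$ (these agree only at $p=2$). Balancing $H^2/\ell \asymp n^{1-p}\ell^{p-1}H$ with $H\asymp\sqrt\ell$ gives $\ell\asymp n^{2(p-1)/(2p-1)}=n^{2\alpha_p}$ and $H\asymp n^{\alpha_p}$, whereas you claim $\ell\asymp n^{p/(2p-1)}$, which contradicts your own $H=\sqrt\ell$ relation (they coincide only at $p=2$).

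Second, and more importantly, your window scale $m$ is held fixed in $\lambda$, which cannot produce the asserted stretched exponent. The Cameron--Martin cost over a $\lambda$-independent window is $\asymp\lambda^2 n^{2\alpha_p}/m$, quadratic in $\lambda$ no matter how the $n$-powers are chosen; no amount of plugging in $\alpha_p$ and $m$ turns the $\lambda^2$ into $\lambda^{(2p-1)/p}$. The essential mechanism is that the optimal window must grow with $\lambda$: taking $L\asymp\lambda^{1/p}n^{2\alpha_p}$ gives $\lambda^2 n^{2\alpha_p}/L\asymp\lambda^{2-1/p}=\lambda^{(2p-1)/p}$. This is exactly the paper's choice ($L=an^{2\alpha_p}$ with $a=(\lambda/(2p-1))^{1/p}$), mirroring the $\sqrt{\lambda}\,n^{2/3}$ windows appearing in the smooth $C^2$ case.

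Third, invoking Theorem~\ref{thm:unif_curv_free_energy} on the flanks is not what the paper does and is not needed. Theorem~\ref{thm:unif_curv_free_energy} requires $h''<0$ uniformly, which fails near the singular point for $p\ne 2$, so one would have to carefully separate a macroscopic smooth piece from a mesoscopic singular piece, and match their free-energy corrections. The paper sidesteps all of this by replacing the obstacle wholesale: for the lower bound it uses the truncated floor $h_n^-\le h_n$ which flattens the plateau $\{\abs{k}\le L\}$ to height $h_n(L)$, so FKG gives $\mu_n^{0,h_n^-}\preccurlyeq\mu_n^{0,h_n}$ and the Markov property reduces to an explicit Gaussian bridge on the plateau; for the upper bound it uses a concave, piecewise-linear extension $h_n^+\ge h_n$, which concentrates the effective potential $\gamma_i=\Delta^2 h_n^+(i)$ at the two break-points $\pm L$ rather than spreading it over the window as your smooth bump $\psi$ would, and then compares with a Brownian excursion tilted by a two-point potential. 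This is simpler and avoids the delicate matching of $n^{1/3}$-type corrections from the flanks that you correctly flag as the hard part of your route — precisely because that bookkeeping never arises. The layer-cake derivation of moments/variance is as you describe, identical to the proof of Theorem~\ref{thm:unif_curv_typ_height_fluctuations}.
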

\begin{proof}
    The proof for the tails is given in Section~\ref{sec:Gaussian_p}. The lower bound is proved in Lemma~\ref{lem:Gaussian:height_dev_proba_LB}, the upper bound in Lemma~\ref{lem:Gaussian:height_dev_proba_UB}. The resulting bounds on the moments and variance are obtained as in the proof of Theorem~\ref{thm:unif_curv_typ_height_fluctuations} (in fact, it is even simpler here, since the upper tail works for arbitrarily large values of \(\lambda)\).
\end{proof}

\begin{figure}
    \centering
    \begin{tikzpicture}[scale=0.8]
        \begin{axis}[axis lines = left, axis line style={-{Stealth[scale=1.5]}}, y=11cm, x=1cm, xtick={0,...,15}, ytick={0,0.1,0.2,0.3,0.4,0.5},ymax=.54,xmax=15.5]
        \addplot[domain=1:15.3,samples=100,red,thick]{(x-1)/(2*x-1)};
        \addplot[domain=1:15.3,samples=2, ForestGreen, dashed]{0.5};
        \end{axis}
    \end{tikzpicture}
    \caption{The graph of \(p\mapsto \alpha_p = \frac{p-1}{2p-1}\).}
    \label{Fig:plot_alpha_p}
\end{figure}
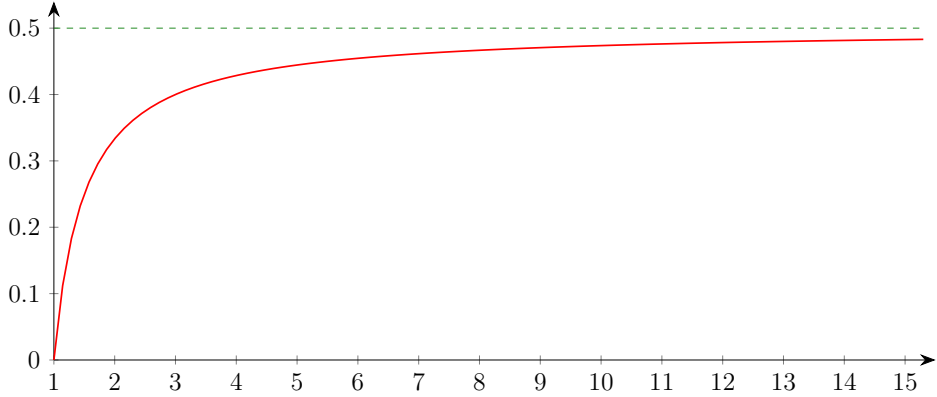

\subsection{Open problems}

\begin{itemize}
	\item In several instances (e.g., \cite{Ferrari+Spohn-2005, Ioffe+Shlosman+Velenik-2015}), it is shown that members of this universality class admit a Ferrari-Spohn (FS) diffusion as scaling limit. It would be very interesting to determine the scaling limit in the setting of this paper. One difficulty is that the time-scale at which relaxation to the FS diffusion occurs is of order \(n^{2/3}\), which coincides with the distance over which the curvature of the obstacle changes significantly, and the latter should affect the parameters of the FS diffusion. One would therefore probably need a better quantitative understanding of the relaxation than obtained in previous works, in order to address this problem.
	\item We only discuss the case of concave obstacle. However, the methods we use would allow to treat general (\(C^2\)) macroscopic obstacles, by working with the concave envelope. Indeed the random walk will concentrate in the latter under the conditionning. In particular, this would prove that the fluctuations are still of order \(n^{1/3}\) at any point where the obstacle has a strictly negative curvature, while they are of order \(n^{1/2}\) along affine pieces of the envelope.
	\item The methods developed in this paper can also be used to study a random walk bridge conditioned to stay above a \emph{mesoscopic} obstacle, say given by a function \(n^\eta h(x/n)\) for \(\eta\in (\tfrac12,1)\). The corresponding analysis in the case \(\eta<\tfrac12\) follows from the results in~\cite{Sloothaak+Wachtel+Zwart-2018}.
	\item An important extension in view of applications to spin systems (see below) would be to replace the space-time trajectory of a one-dimensional random walk, as studied here, by the spatial trajectory of a directed or massive random walk in \(Z^d\), \(d\geq 2\).
	\item One might also analyse the corresponding problem for a higher-dimensional effective interface model, for instance a GFF on \(\Z^d\) conditioned to stay above a concave obstacle.
\end{itemize}

\subsection{Acknowledgments}
We are grateful to Francesco Caravenna and Vitali Wachtel for pointers to the literature, and to Kamil Khettabi for his feedback on an earlier version of this work.
Y.V. is partially supported by the Swiss NSF through the NCCR SwissMAP.

\section{Large deviation and RW in a potential}
\label{sec:LD_part}

The first step of the proof of Theorems~\ref{thm:unif_curv_free_energy} and~\ref{thm:unif_curv_typ_height_fluctuations} is to perform a suitable change of measure which will take care of the ``large deviation part'' of the measure, and leave us with the study of a random walk in a potential. This part is not limited to \(\Z\)-valued random variables. For this whole section, let \(a_*<a<0<b<b_*\) be fixed, and write
\begin{equation*}
    \niceSlopeSet = H'\bigl([a,b]\bigr),
\end{equation*}
the set of \emph{nice slopes}.

\subsection{Output of the section}
\label{subsec:measure_tilt_LD}

Recall that \(h\in C^2([0,1],\R)\) is a concave function with strictly negative second derivative, such that \(h'\) takes values in \(\niceSlopeSet\), and \(h(0)=0\). Define
\begin{equation}
\label{eq:def:h_n}
    h_n(k) = nh(k/n),\quad k=0,\dots, n,
\end{equation}
and
\begin{equation*}
    \delta_k \equiv \delta_{n,k} = h_n(k)-h_{n}(k-1) = h'(k/n) + R_n^1(k/n),
\end{equation*}
with \(\abs{R_n^1(x)}\leq c/n\) with \(c\) uniform over \(x,n\) (Taylor--Lagrange at order \(1\), \(h''\) is uniformly bounded over \([0,1]\)). Therefore, the numbers \(\gamma_k\equiv \gamma_{n,k}\) defined via \(H'(\gamma_k) = \delta_k\) are well defined.

Let \(Y_1',Y_2',\dots\) be an independent sequence such that
\begin{equation}
\label{eq:def:tilted_RV}
    P(Y_k'\in dx) = \frac{e^{\gamma_k x}}{E(e^{\gamma_k X})} P(X\in dx).
\end{equation}
Let \(Y_k = Y'_k-E(Y_k')\), and let \(Z_0=0\), \(Z_{k+1}=Z_k+Y_{k+1}\) be the corresponding random walk. From the choice of \(\gamma_k\), one has
\begin{equation*}
    E(Y_k') = \delta_k.
\end{equation*}

The goal of this section is to obtain a representation of our problem as a ``large deviation'' part times a ``random walk in a potential'' part.
\begin{lemma}
    \label{lem:large_dev_extraction}
    Let \(\epsilon>0\). With the notations above, for any \(n\geq 1\), and non-negative measurable function \(f:\R^{n+1}\to \R_+\),
    \begin{multline}
        E\bigl(f(S) \mathds{1}_{S\geq h_n}\mathds{1}_{S_n\leq h_n(n)+\epsilon}\bigr)
        \\
        =
        e^{-n\int_{0}^1 I(h'(s))ds} E\Bigl(f(Z+h_n)\mathds{1}_{Z\geq 0}\mathds{1}_{Z_n\leq \epsilon}\exp\Bigl(-\sum_{k=1}^{n-1} \tfrac{\alpha_k}{n} Z_k\Bigr) \Bigr)e^{O_n(1)},
    \end{multline}
    with \(O_n(1)\) uniform over \(f\), and
    where
    \begin{equation}
        \alpha_k \equiv \alpha_{n,k} = -\frac{h''(k/n)}{H''((H')^{-1}(h'(k/n)))} + b_{n,k},
    \end{equation}
    and the \(b_k\equiv b_{n,k}\)'s are such that \(\sup_{k} \abs{b_{k}} = o_n(1)\). Moreover, the \(b_{n,k}\)'s are independent of \(\epsilon\).
\end{lemma}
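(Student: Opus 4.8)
The plan is to unfold the expectation over the product measure, tilt each increment $X_k$ by the slope $\gamma_k$ chosen so that $E(Y_k')=\delta_k$, and then carefully track the resulting exponential factors. First I would write, for a trajectory $s=(s_0,\dots,s_n)$ with $s_0=0$, the law of $(S_0,\dots,S_n)$ as $\prod_{k=1}^n P(X\in ds_k-s_{k-1})$. Performing the change of measure~\eqref{eq:def:tilted_RV} for each $k$ introduces the Radon--Nikodym factor $\prod_{k=1}^n M(\gamma_k)e^{-\gamma_k(s_k-s_{k-1})}$, so that $S$ under $P$ corresponds to the walk with increments $Y_k'=Y_k+\delta_k$, i.e.\ to $Z+h_n$ on the event $\{S_0=0\}$ (using $h_n(0)=0$ and $h_n(k)-h_n(k-1)=\delta_k$). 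Thus the event $\{S\ge h_n\}$ becomes $\{Z\ge 0\}$ and $\{S_n\le h_n(n)+\epsilon\}$ becomes $\{Z_n\le\epsilon\}$, and $f(S)$ becomes $f(Z+h_n)$. It remains to analyze the scalar prefactor $\Pi_n := \prod_{k=1}^n M(\gamma_k)e^{-\gamma_k\delta_k}$, more precisely its logarithm $\sum_{k=1}^n\bigl(H(\gamma_k)-\gamma_k\delta_k\bigr)$, after reinstating the $Z_k$-dependence that was absorbed in the per-step normalizations.

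The key point is that after the tilt the walk on the right is the centred walk $Z$, but the change of measure that produced $Z+h_n$ from $S$ must be written with the \emph{increments} $Z_k-Z_{k-1}=s_k-s_{k-1}-\delta_k$; re-expressing $\prod_k e^{-\gamma_k(s_k-s_{k-1})}$ in terms of the $Z$-variables and the deterministic $h_n$, and then performing an Abel summation on the factor $e^{-\sum_k\gamma_k(Z_k-Z_{k-1})}=e^{-\gamma_n Z_n+\sum_{k=1}^{n-1}(\gamma_{k+1}-\gamma_k)Z_k}$, is what generates the linear statistic $\sum_k\frac{\alpha_k}{n}Z_k$. Here $\gamma_{k+1}-\gamma_k = (H')^{-1}(\delta_{k+1})-(H')^{-1}(\delta_k)$, and since $\delta_k=h'(k/n)+R_n^1(k/n)$ with $\delta_{k+1}-\delta_k = \frac1n h''(k/n)+O(1/n^2)$, a first-order Taylor expansion of $(H')^{-1}$ gives
\[
\gamma_{k+1}-\gamma_k = \frac{1}{n}\,\frac{h''(k/n)}{H''\bigl((H')^{-1}(h'(k/n))\bigr)} + O(1/n^2),
\]
so that $-(\gamma_{k+1}-\gamma_k) = \frac{\alpha_k}{n}$ with $\alpha_k$ exactly as in the statement and $b_{n,k}=o_n(1)$ uniformly, using that $h',h''$ are uniformly continuous on $[0,1]$ and that $H''$ is bounded below on the compact $[a,b]$ (so $(H')^{-1}$ is Lipschitz on $\niceSlopeSet$); the boundary term $-\gamma_n Z_n$ is bounded by $C|Z_n|\le C\epsilon$ on the relevant event, hence contributes to the $e^{O_n(1)}$.

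The remaining deterministic factor is $\sum_{k=1}^n\bigl(H(\gamma_k)-\gamma_k\delta_k\bigr) = -\sum_{k=1}^n I(\delta_k)$, since $\delta_k=H'(\gamma_k)$ makes $\gamma_k$ the maximizer in the Legendre transform, so $I(\delta_k)=\gamma_k\delta_k-H(\gamma_k)$. Comparing the Riemann sum $\frac1n\sum_{k=1}^n I(\delta_k)$ with the integral $\int_0^1 I(h'(s))\,ds$: since $\delta_k=h'(k/n)+O(1/n)$ and $I$ is $C^1$ with bounded derivative on the relevant compact slope set (again because $h'$ takes values in the interior $\niceSlopeSet$ of the domain), we get $\sum_{k=1}^n I(\delta_k) = n\int_0^1 I(h'(s))\,ds + O(1)$, where the $O(1)$ collects both the Riemann-sum error and the $O(1/n)$ perturbation summed over $n$ terms; this $O(1)$ is deterministic, hence uniform over $f$, and goes into $e^{O_n(1)}$. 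Assembling the tilt factor $e^{-\sum_k I(\delta_k)}$, the Abel-summation output $e^{-\sum_{k=1}^{n-1}\frac{\alpha_k}{n}Z_k}$, and the bounded boundary/error terms $e^{O_n(1)}$ yields the claimed identity; the independence of $b_{n,k}$ from $\epsilon$ is clear since $b_{n,k}$ depends only on $h$, $H$ and $n$ through the Taylor remainders, not on the truncation level.

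The main obstacle I expect is the bookkeeping in the Abel summation: one must be careful that the per-step normalizations $M(\gamma_k)$ are genuinely deterministic (they are, since $\gamma_k$ depends only on $n$ and $k$), and that the only stochastic contribution to the exponent comes from $e^{-\sum_k\gamma_k(Z_k-Z_{k-1})}$ rewritten via summation by parts --- keeping the signs and the off-by-one indices straight (the sum runs to $n-1$, the $k=n$ term being the boundary term $-\gamma_nZ_n$) is where an error would most easily creep in. A secondary technical point is making the two ``$O(1)$'' claims (the Riemann-sum error for $I$ and the uniform smallness of $b_{n,k}$) genuinely uniform, which requires invoking the compactness of $[a,b]$ and the $C^2$ regularity of $h$ to get uniform moduli of continuity --- routine, but it must be stated.
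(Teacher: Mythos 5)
Your proposal is correct and follows essentially the same route as the paper's proof (which is split there into Claims~\ref{claim:change_of_measure}, \ref{claim:large_dev_part}, \ref{claim:effective_potential}): tilt each increment by \(\gamma_k=(H')^{-1}(\delta_k)\) to map \(S\) to \(Z+h_n\), identify the deterministic prefactor with \(-\sum_k I(\delta_k)\) and compare it to \(n\int_0^1 I(h')\,ds\) by a Riemann-sum estimate, and perform a summation by parts on \(\sum_k\gamma_k Y_k\) to produce the potential \(\sum_k(\alpha_k/n)Z_k\) plus the boundary term \(\gamma_n Z_n\) (absorbed into \(e^{O_n(1)}\) on \(\{0\le Z_n\le\epsilon\}\)). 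One small overstatement: under \(h\in C^2\) one only gets \(\delta_{k+1}-\delta_k=\tfrac1n h''(k/n)+o(1/n)\) uniformly in \(k\) (from uniform continuity of \(h''\)), not the \(O(1/n^2)\) you write, which would require a Lipschitz \(h''\); this does not affect your conclusion \(\sup_k|b_{n,k}|=o_n(1)\), which is all the lemma needs.
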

The proof of this Lemma is the content of Section~\ref{ssec:ChangeOfMeasure}.

\begin{remark}
Observe that this reduction provides a robust link between the random walk over a concave obstacle, which is a member of the universality class in which the exponents result from an interplay between curvature and fluctuations, and a random walk conditioned to stay positive and subject to an external potential, which is very similar to the other members of the universality class, for which the mechanism is rather of the type energy \emph{vs.}\ entropy (in particular, the models investigated in~\cite{Abraham+Smith-1986, Hryniv+Velenik-2004, Ioffe+Shlosman+Velenik-2015}).
\end {remark}

\subsection{Change of measure, large deviation, and effective potential}
\label{ssec:ChangeOfMeasure}
\begin{claim}
    \label{claim:change_of_measure}
    For any measurable \(g:\R^n \to \R\),
    \begin{equation*}
        E\bigl(g(X_1,\dots, X_n)\bigr)
        \\
        =
        e^{\sum_{k=1}^n H(\gamma_k)-\gamma_k\delta_k} E\Bigl(g(Y_1+E(Y_1'),\dots,Y_n+E(Y_n'))e^{-\sum_{k=1}^n \gamma_k Y_k}\Bigr).
    \end{equation*}In particular, for any measurable \(f:\R^{n+1}\to \R\),
    \begin{equation}
    \label{eq:change_of_measure_S}
        E\bigl(f(S)\mathds{1}_{S\geq h_n}\bigr) = e^{\sum_{k=1}^n H(\gamma_k)-\gamma_k\delta_k} E\Bigl(f(Z+h_n)\mathds{1}_{Z\geq 0} e^{-\sum_{k=1}^n \gamma_k Y_k}\Bigr).
    \end{equation}
\end{claim}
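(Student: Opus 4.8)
The statement is a multi-parameter exponential (Cramér) tilt identity, so the plan is a direct Radon--Nikodym computation. Write \(\mu\) for the common law of the \(X_i\) and, for each \(k\), \(\mu_k(dx) = \frac{e^{\gamma_k x}}{M(\gamma_k)}\,\mu(dx)\) for the law of \(Y_k'\). First I would record that each \(\gamma_k\) is well defined and lies in \((a_*,b_*)\): since \(\delta_k = h'(k/n) + R_n^1(k/n)\) with \(h'\) taking values in \(\niceSlopeSet = H'([a,b])\) and \(\abs{R_n^1}\leq c/n\), for all large \(n\) the \(\delta_k\) lie in a fixed compact subset of the open interval \(H'((a_*,b_*))\), on which \(H'\) is an increasing homeomorphism; in particular \(M(\gamma_k)<\infty\). (For the identity as stated one may simply take this as part of the hypotheses, as in the paragraph preceding the claim.)

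The product structure then gives, for \(\mu_1\otimes\cdots\otimes\mu_n\)-almost every \((x_1,\dots,x_n)\),
\[
    \frac{d(\mu^{\otimes n})}{d(\mu_1\otimes\cdots\otimes\mu_n)}(x_1,\dots,x_n)
    = \prod_{k=1}^n \frac{M(\gamma_k)}{e^{\gamma_k x_k}}
    = e^{\sum_{k=1}^n H(\gamma_k)}\, e^{-\sum_{k=1}^n \gamma_k x_k}.
\]
Integrating \(g\) against \(\mu^{\otimes n}\) and changing to the tilted measure yields
\[
    E\bigl(g(X_1,\dots,X_n)\bigr)
    = e^{\sum_{k=1}^n H(\gamma_k)}\, E\Bigl(g(Y_1',\dots,Y_n')\,e^{-\sum_{k=1}^n \gamma_k Y_k'}\Bigr).
\]
Now substitute \(Y_k' = Y_k + E(Y_k')\) and use \(E(Y_k') = \delta_k\): then \(\sum_k \gamma_k Y_k' = \sum_k \gamma_k Y_k + \sum_k \gamma_k \delta_k\), so the deterministic factor \(e^{-\sum_k \gamma_k \delta_k}\) pulls out, giving the first displayed identity of the claim.

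For the second identity I would specialize \(g\). Since \(S_0=0\) and \(S_k = X_1+\cdots+X_k\), choosing
\[
    g(x_1,\dots,x_n) = f\bigl(0,\, x_1,\, x_1+x_2,\, \dots,\, x_1+\cdots+x_n\bigr)\,\mathds{1}\{\,x_1+\cdots+x_j \geq h_n(j)\ \forall j=1,\dots,n\,\}
\]
gives \(g(X_1,\dots,X_n) = f(S)\mathds{1}_{S\geq h_n}\) (the constraint at \(j=0\) being \(0\geq h_n(0)=0\)). On the other side, the partial sums of the arguments \(Y_i + E(Y_i') = Y_i + \delta_i\) equal \(Z_j + \sum_{i=1}^j \delta_i = Z_j + h_n(j)\) because \(\sum_{i=1}^j \delta_i = h_n(j)-h_n(0) = h_n(j)\) by telescoping and \(h_n(0)=0\). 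Hence \(g(Y_1+E(Y_1'),\dots,Y_n+E(Y_n')) = f(Z+h_n)\,\mathds{1}\{Z_j+h_n(j)\geq h_n(j)\ \forall j\} = f(Z+h_n)\,\mathds{1}_{Z\geq 0}\), and plugging into the first identity (with \(e^{\sum_k H(\gamma_k)-\gamma_k\delta_k}\) as prefactor) yields exactly~\eqref{eq:change_of_measure_S}.

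There is no serious obstacle here: the argument is the standard change-of-measure identity, and the only points requiring a word are the well-definedness of \(\gamma_k\) and the finiteness of \(M(\gamma_k)\) (handled above) and the bookkeeping of the recentering constants \(E(Y_k')=\delta_k\) together with the telescoping \(\sum_{i\le j}\delta_i = h_n(j)\), which is where the hypothesis \(h(0)=0\) enters.
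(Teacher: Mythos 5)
Your proposal is correct and is exactly the standard exponential-tilt computation that the paper itself invokes with the one-line remark that ``the first formula is direct''; you simply spell out the Radon--Nikodym factor, the recentering via \(E(Y_k')=\delta_k\), and the telescoping \(\sum_{i\le j}\delta_i = h_n(j)\) that the paper leaves implicit. No discrepancy with the paper's proof.
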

\begin{proof}
    The first formula is direct (using \(E(Y'_k) = \delta_k\)). The second formula is a straightforward application of the first.
\end{proof}

We now turn to the evaluation of the sum \(\sum_{k=1}^n H(\gamma_k)-\gamma_k\delta_k\) to obtain the large deviation part of the estimate.

\begin{claim}
    \label{claim:large_dev_part}
    One has
    \begin{equation*}
        \sum_{k=1}^n H(\gamma_k)-\gamma_k\delta_k = -n\int_{0}^1 I(h'(s))ds + O_n(1).
    \end{equation*}
\end{claim}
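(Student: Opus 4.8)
The plan is to evaluate the Riemann-type sum $\sum_{k=1}^n \bigl(H(\gamma_k)-\gamma_k\delta_k\bigr)$ by relating each summand to the value of the rate function $I$ at the appropriate slope. Recall that $I$ is the Legendre--Fenchel transform of $H$, so for a slope $x\in\niceSlopeSet$ one has $I(x)=\gamma x - H(\gamma)$ where $\gamma=(H')^{-1}(x)$; in particular $H(\gamma_k)-\gamma_k\delta_k = -I(\delta_k)$ exactly, since $\gamma_k$ was defined by $H'(\gamma_k)=\delta_k$. Thus the sum equals $-\sum_{k=1}^n I(\delta_k)$, and the task reduces to showing $\sum_{k=1}^n I(\delta_k) = n\int_0^1 I(h'(s))\,ds + O_n(1)$.

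First I would record the regularity of $I$ on a neighbourhood of $\niceSlopeSet$: since $H$ is smooth and strictly convex on $(a_*,b_*)$ with $H''>0$ there, its Legendre transform $I$ is $C^2$ (indeed smooth) on the open interval $H'((a,b))$, with bounded first and second derivatives on the compact set $\niceSlopeSet=H'([a,b])$. Here one must check that all the $\delta_k$ stay inside a fixed compact subset of $H'((a_*,b_*))$: this follows because $\delta_k = h'(k/n) + R_n^1(k/n)$ with $h'$ taking values in $\niceSlopeSet=H'([a,b])$ and $|R_n^1|\le c/n$, so for $n$ large all $\delta_k$ lie in $H'([a',b'])$ for some $a<a'<b'<b$ — actually more simply in a fixed $c/n$-neighbourhood of $\niceSlopeSet$, which is contained in a fixed compact once $n\ge n_0$.

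Next I would perform two successive comparisons. Write $I(\delta_k) = I(h'(k/n)) + I'(h'(k/n))R_n^1(k/n) + O(1/n^2)$ by a first-order Taylor expansion of $I$, using $|I''|$ bounded and $|R_n^1|\le c/n$; summing over $k=1,\dots,n$ the error terms contribute $n\cdot O(1/n^2)=O(1/n)$, and since $|I'|$ is bounded and $|R_n^1|\le c/n$ the middle term contributes at most $n\cdot O(1/n) = O(1)$. So $\sum_{k=1}^n I(\delta_k) = \sum_{k=1}^n I(h'(k/n)) + O_n(1)$. It then remains to compare the Riemann sum $\sum_{k=1}^n I(h'(k/n))$ to $n\int_0^1 I(h'(s))\,ds = \sum_{k=1}^n \int_{(k-1)/n}^{k/n} n\,I(h'(s))\,ds$. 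The difference is $\sum_{k=1}^n \int_{(k-1)/n}^{k/n} n\bigl(I(h'(k/n)) - I(h'(s))\bigr)\,ds$, and since $s\mapsto I(h'(s))$ is $C^1$ on $[0,1]$ (composition of the $C^2$ function $h$, the $C^1$ map $h'$ landing in the region where $I$ is $C^1$) with derivative bounded by some constant $L$, each integrand is bounded by $L/n$ in absolute value on an interval of length $1/n$, so each term is $O(1/n^2)$ and the total difference is $O(1/n)=O_n(1)$. Combining the two comparisons gives $\sum_{k=1}^n I(\delta_k) = n\int_0^1 I(h'(s))\,ds + O_n(1)$, hence the claim.

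The main obstacle — really the only point requiring care — is ensuring uniform control of the Taylor remainders, i.e.\ that all the $\delta_k$ and all the $h'(k/n)$ stay in a single compact set on which $I',I''$ are bounded, uniformly in $n\ge n_0$ and $k$. This is exactly where the hypotheses $\Image(h')\subset\niceSlopeSet$ (keeping the slopes strictly inside the domain of smoothness of $I$) and $h\in C^2$ (giving the $O(1/n)$ bound on $R_n^1$ and the boundedness of the derivative of $s\mapsto I(h'(s))$) are used; without $h'$ bounded away from the boundary slopes $\lim_{t\to a_*}H'(t)$, $\lim_{t\to b_*}H'(t)$, the function $I$ could blow up and the expansion would fail. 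Everything else is a routine Riemann-sum estimate.
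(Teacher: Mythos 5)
Your proposal is correct and follows essentially the same route as the paper: reduce to $-\sum_k I(\delta_k)$ via the exact identity $I(\delta_k)=\gamma_k\delta_k-H(\gamma_k)$, then compare the Riemann sum to $n\int_0^1 I(h'(s))\,ds$ using the regularity of $I$ on the compact set of admissible slopes. The only cosmetic difference is that you split the comparison into two steps (a Taylor expansion of $I$ around $h'(k/n)$, then a Riemann-sum estimate for $I\circ h'$), whereas the paper collapses this into a single Lipschitz bound $\sup_{x\in[k-1,k+1]}|I(\delta_k)-I(h'(x/n))|\le C/n$; your invocation of $C^2$ regularity of $I$ is therefore a slight overkill, as $C^1$ (hence Lipschitz on the compact) suffices, but this is not an error.
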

\begin{proof}
    As \(h\) is \(C^2\), \(\sup_{x\in [k-1,k+1]}|\delta_k - h'(x/n) | \leq c/n\) with \(c\) uniform over \(k\). Now, \(\gamma_k = (H')^{-1}(\delta_k)\) has been chosen so that \(I(\delta_k) = \gamma_k\delta_k - H(\gamma_k)\), so
    \begin{equation*}
        \sum_{k=1}^n H(\gamma_k)-\gamma_k\delta_k
        =
        -\sum_{k=1}^n I(\delta_k).
    \end{equation*}
    Moreover, \(I\) is \(C^1\) on \(\niceSlopeSet\) (as the Legendre--Fenchel transform of an analytic function), so there is \(C\geq 0\) such that
    \begin{equation*}
        \sup_{k}\sup_{x\in [k-1,k+1]} \abs{I(\delta_k) - I(h'(x/n))} \leq C/n.
    \end{equation*}
    In particular
    \begin{equation*}
        \babs{I(\delta_k)-\int_{k-1}^{k} I(h'(x/n)) dx}
        \leq
        \int_{k-1}^{k} \babs{I(\delta_k)- I(h'(x/n))} dx
        \leq
        \frac{C}{n}.
    \end{equation*}
    So,
    \begin{equation*}
        \babs{\sum_{k=1}^n I(\delta_k) - \int_{0}^n I(h'(x/n)) dx}
        \leq
        n\frac{C}{n} = C,
    \end{equation*}
    which gives the claim after a change of variable.
\end{proof}

Finally, we turn to the derivation of the effective potential induced by the change of measure.

\begin{claim}
    \label{claim:effective_potential}
    Let \(\alpha_k \equiv \alpha_{n,k} = n(\gamma_k-\gamma_{k+1})\). For any \(n\geq 1\),
    \begin{itemize}
        \item one has the following identity
        \begin{equation*}
            \sum_{k=1}^n \gamma_k Y_k = \gamma_n Z_n + \sum_{k=1}^{n-1} \frac{\alpha_k}{n} Z_k;
        \end{equation*}
        \item the \(\alpha_{n,k}\)'s satisfy that
        \begin{equation*}
            \sup_{k} \Babs{\alpha_{n,k} + \frac{h''(k/n)}{H''((H')^{-1}(h'(k/n)))}} = o_n(1).
        \end{equation*}
        In particular, there exist \(\alpha_+,\alpha_-\in (0,+\infty)\) and \(n_0\geq 1\) such that
        \begin{equation*}
            \alpha_- \leq \inf_{n\geq n_0}\inf_{k}\alpha_{n,k} \leq \sup_{n\geq n_0}\sup_{k}\alpha_{n,k} \leq \alpha_+.
        \end{equation*}
    \end{itemize}
\end{claim}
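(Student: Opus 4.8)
The plan is to treat the two bullet points in order. For the first identity, the idea is a straightforward Abel summation (summation by parts). Writing $Y_k = Z_k - Z_{k-1}$ and using $Z_0=0$, one has $\sum_{k=1}^n \gamma_k Y_k = \sum_{k=1}^n \gamma_k(Z_k-Z_{k-1})$; reindexing the $Z_{k-1}$ term and collecting the coefficient of each $Z_k$ for $k=1,\dots,n-1$ gives $\gamma_k-\gamma_{k+1}$, while the boundary term at $k=n$ contributes $\gamma_n Z_n$ (the $k=0$ term vanishes since $Z_0=0$). Substituting $\gamma_k-\gamma_{k+1} = \alpha_{n,k}/n$ by definition of $\alpha_{n,k}$ yields the claimed formula. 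This step is routine.

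For the second bullet, the point is to estimate $\alpha_{n,k} = n(\gamma_k-\gamma_{k+1})$ via a Taylor expansion. Recall $\gamma_k = (H')^{-1}(\delta_k)$ with $\delta_k = h'(k/n) + R_n^1(k/n)$, $|R_n^1(x)|\le c/n$. First I would argue that all relevant $\delta_k$ lie in a fixed compact subset of the interior of $\niceSlopeSet$ (for $n$ large, using that $h'$ takes values in $\niceSlopeSet = H'([a,b])$ and the $O(1/n)$ error), so that $(H')^{-1}$ is $C^1$ — indeed $C^\infty$ — there with bounded derivatives, uniformly in $k,n$. Then write
\[
    \gamma_k - \gamma_{k+1} = (H')^{-1}(\delta_k) - (H')^{-1}(\delta_{k+1}),
\]
and expand: $\delta_k - \delta_{k+1} = h'(k/n) - h'((k+1)/n) + O(1/n)$. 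By Taylor--Lagrange applied to $h'$ (which is $C^1$ with $h''$ uniformly bounded and continuous on $[0,1]$), $h'(k/n)-h'((k+1)/n) = -h''(k/n)/n + o(1/n)$ uniformly in $k$, where the $o(1/n)$ comes from uniform continuity of $h''$. Combining with the chain rule for $(H')^{-1}$ — whose derivative at $\delta_k$ is $1/H''((H')^{-1}(\delta_k))$, and $(H')^{-1}(\delta_k) = (H')^{-1}(h'(k/n)) + O(1/n)$ with $1/H''$ locally Lipschitz and bounded away from $0$ on the relevant compact set — gives
\[
    n(\gamma_k-\gamma_{k+1}) = \frac{-h''(k/n)}{H''((H')^{-1}(h'(k/n)))} + o_n(1),
\]
uniformly in $k$, which is the desired first estimate.

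The final ``in particular'' assertion then follows immediately: the main term $x\mapsto -h''(x)/H''((H')^{-1}(h'(x)))$ is continuous on the compact interval $[0,1]$, hence bounded; it is bounded below by a positive constant because $h''<0$ is bounded away from $0$ (by $\sup_{[0,1]} h'' < 0$) and $H''>0$ is finite on the relevant compact set; adding the uniformly $o_n(1)$ correction and choosing $n_0$ large enough to absorb it gives positive constants $\alpha_-\le\alpha_+$ with $\alpha_-\le\alpha_{n,k}\le\alpha_+$ for all $n\ge n_0$ and all $k$.

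The only mildly delicate point is bookkeeping the error terms uniformly in $k$: one must ensure that the Taylor remainders for $h'$, the error $R_n^1$, and the Lipschitz estimates for $(H')^{-1}$ and $1/H''$ are all controlled by a single modulus-of-continuity bound that is $o(1/n)$ after multiplication by $n$, independent of $k\in\{1,\dots,n-1\}$ and of $n\ge n_0$. This is where the hypotheses $h\in C^2([0,1])$ (so $h''$ is uniformly continuous) and $\mathrm{Image}(h')\subset H'([a,b])$ with $H$ analytic (so $H''$ is bounded and bounded away from $0$ on a neighborhood of the relevant slopes) are used; no genuinely new idea is required beyond careful uniform estimates.
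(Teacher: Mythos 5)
Your proposal is correct and follows essentially the same two-step route as the paper: an Abel summation for the identity, then a Taylor--Lagrange expansion of $(H')^{-1}$ (with derivative $1/H''$) at $\delta_k$, together with the $O(1/n)$ replacement of $\delta_k$ by $h'(k/n)$ inside the coefficient, and compactness to get uniform bounds. The uniformity bookkeeping you flag at the end is precisely what the paper tracks via its remainders $r_k, r'_k, r''_k$, so nothing is missing.
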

\begin{proof}
    Start by doing a summation by parts :
    \begin{multline*}
        \sum_{k=1}^n \gamma_k Y_k
        =
        \sum_{k=1}^n \gamma_k (Z_k -Z_{k-1}) = \sum_{k=1}^n \gamma_k Z_k - \sum_{k=0}^{n-1} \gamma_{k+1} Z_{k}
        \\=
        \gamma_n Z_n -\gamma_1 Z_0 + \sum_{k=1}^n (\gamma_k-\gamma_{k+1}) Z_k,
    \end{multline*}
    where we have set \(\lambda_{n+1}=0\) for later convenience.
    Recalling that \(Z_0=0\), we obtain the first identity. Then, one has \(\gamma_k = (H')^{-1}(\delta_k)\), so, using a Taylor--Lagrange expansion,
    \begin{equation*}
        \gamma_k-\gamma_{k+1} = (H')^{-1}(\delta_k) - (H')^{-1}(\delta_k+ \epsilon_k) = \frac{-1}{H''((H')^{-1}(\delta_k))} \epsilon_k + r_k
    \end{equation*}
    with \(\epsilon_k = \delta_{k+1}-\delta_k = \frac{h''(k/n)}{n} + r'_k\), and \(\max(\abs{r_k},\abs{r'_k})\leq o_n(1)/n\) with \(o_n(1)\) uniform over \(k\) (use that \(h\in C^2([0,1])\) to obtain the uniformity of the \(o_n(1)\)). Now, as \(|\delta_k -h'(k/n)| \leq c/n\), we get
    \begin{equation*}
        \sup_k \Babs{\frac{1}{H''((H')^{-1}(\delta_k))} - \frac{1}{H''((H')^{-1}(h'(k/n)))}} \leq \frac{c}{n},
    \end{equation*}
    so
    \begin{equation*}
        \gamma_k-\gamma_{k+1} = -\frac{h''(k/n)}{n H''((H')^{-1}(h'(k/n)))} + r_k''
    \end{equation*}
    with \(\sup_k \abs{r''_k}\leq o_n(1)/n\). This is the first part of the second point. The second part follows from remembering that \(0< \inf -h'' \leq \sup -h'' < +\infty\) and the same holds for \(H''\) over the allowed values of \((H')^{-1}(h'(k/n))\).
\end{proof}

We are now in position to prove Lemma~\ref{lem:large_dev_extraction}.
\begin{proof}[Proof of Lemma~\ref{lem:large_dev_extraction}]
    Let \(\epsilon>0\). Applying Claims~\ref{claim:change_of_measure},~\ref{claim:large_dev_part}, and~\ref{claim:effective_potential} (and using that \(\gamma_n \epsilon = \epsilon O_n(1)\)), one obtains that for any non-negative \(f\)
    \begin{multline*}
        E\bigl(f(S) \mathds{1}_{S\geq h_n}\mathds{1}_{S_n\leq h_n(n)+\epsilon}\bigr)
        \\
        =
        e^{-n\int_{0}^1 I(h'(s))ds} E\Bigl(f(Z+h_n)\mathds{1}_{Z\geq 0} \mathds{1}_{Z_n\leq \epsilon} e^{ - \sum_{k=1}^n \frac{\alpha_k}{n} Z_k}\Bigr)e^{O_n(1)},
    \end{multline*}
    where the \(\alpha_k\)'s are given by Claim~\ref{claim:effective_potential}, and the \(O_n\) is uniform over \(f\). This is the wanted statement.
\end{proof}

\section{Proof of Theorems~\ref{thm:unif_curv_free_energy}, \ref{thm:unif_curv_typ_height_fluctuations} and~\ref{thm:Covariance_decay}}

We now restrict to \(\Z\)-valued random walks. The main reason is the use of random walk bridges measures/densities that can be ill defined in general. The same proof works for steps supported on an interval, with a positive density with respect to Lebesgue measure on that interval.

We therefore assume in this section that \(X,X_1,X_2,\dots\) is an i.i.d. sequence, that \(X\) is supported on \(\Z\), that its satisfies~\eqref{eq:conditions_steps}, and that its law is aperiodic.
For this whole section, we fix \(h:[0,1]\to \R_+\) as in Theorem~\ref{thm:unif_curv_free_energy} and define \(h_n\) as in Section~\ref{subsec:measure_tilt_LD}.
Also, recall \(Y',Y,Z,\gamma_{k}\) defined in Section~\ref{subsec:measure_tilt_LD}. We will rely on the results about random walks bridges and excursions from~\cite{Ott+Velenik-2025a}.

\subsection{Area tilted kernel representation}
\label{subsec:kernel_rep}

For \(n\geq 1\), \(0\leq l< k\leq n\), \(x,y\in \R\), define
\begin{equation}
	\kerPF_{l,k}^{n}(x,y)
	=
	E\Bigl(\mathds{1}_{Z_k= y} \prod_{i=l+1}^k \mathds{1}_{Z_i\geq 0} e^{-\frac{\alpha_i}{n}Z_i} \Bgiven Z_l = x\Bigr)
\end{equation}where the \(\alpha_i\) are given by Lemma~\ref{lem:large_dev_extraction} (they implicitly depend on \(n\)). Recall that (see Claim~\ref{claim:effective_potential}) there are \(n_0\geq 1\), and \(\alpha_-,\alpha_+\in (0,+\infty)\) such that
\begin{equation*}
	\alpha_-
	\leq
	\inf_{n\geq n_0}\inf_{1\leq i\leq n} \alpha_i
	\leq
	\sup_{n\geq n_0}\sup_{1\leq i\leq n} \alpha_i
	\leq
	\alpha_+.
\end{equation*}

\subsection{Proof of Theorem~\ref{thm:unif_curv_free_energy}}
\label{subsec:prf:thm:unif_curv_free_energy}

We are now ready to prove Theorem~\ref{thm:unif_curv_free_energy}. Looking at Lemma~\ref{lem:large_dev_extraction}, the lower bound follows from the next Lemma.
\begin{lemma}
	\label{lem:thm_unif_curv_free_energy:LB}
	There are \(n_0\geq 1\), \(c>0\) such that for any \(n\geq n_0\),
	\begin{equation*}
		\kerPF_{0,n}^{n}(0,z_n)
		\geq
		e^{-cn^{1/3}},
	\end{equation*}
	where \(z_n = \lceil h_n(n)\rceil- h_n(n)\).
\end{lemma}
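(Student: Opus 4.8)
The target is a lower bound of the form $e^{-cn^{1/3}}$ on the area-tilted bridge kernel $\kerPF_{0,n}^{n}(0,z_n)$, where the walk $Z$ is pinned at $0$ at both endpoints (up to the $o(1)$ endpoint shift $z_n$), forced to stay non-negative on $\{1,\dots,n-1\}$, and weighted by $\exp(-\sum_{k} \tfrac{\alpha_k}{n} Z_k)$ with $\alpha_k \in [\alpha_-,\alpha_+]$. The guiding heuristic is that the optimal strategy is for $Z$ to perform a ``tube'' of width of order $n^{1/3}$: at that height the area penalty $\sum \tfrac{\alpha_k}{n} Z_k$ costs $O(n \cdot n^{-1} \cdot n^{1/3}) = O(n^{1/3})$, while the entropic cost of keeping a centred walk at height $\gtrsim n^{1/3}$ throughout (equivalently, of the positivity constraint for a bridge that is lifted to height $n^{1/3}$) is also of order $n^{1/3}$. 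Both contributions being $O(n^{1/3})$, one gets the claimed lower bound by exhibiting any single such strategy.

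The concrete plan is a restriction-and-estimate argument. First I would introduce an intermediate height $L = A n^{1/3}$ for a constant $A>0$ to be tuned, and the ``corridor'' event that $Z_k \in [L, 2L]$ for all $k$ in a bulk block, say $k \in \{m, \dots, n-m\}$ with $m = \Theta(n^{2/3})$, together with the requirement that $Z$ climbs from $0$ up to height $\approx L$ over the first $m$ steps and comes back down over the last $m$ steps, staying non-negative throughout. On this event the area weight $\exp(-\sum_k \tfrac{\alpha_k}{n} Z_k)$ is bounded below by $\exp(-\alpha_+ \tfrac{1}{n}\cdot n \cdot 2L) = \exp(-2\alpha_+ A\, n^{1/3})$, uniformly, so it factors out. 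It then remains to lower-bound the probability (under the inhomogeneous but uniformly-elliptic law of $Z$) of the corridor-plus-climb event by $e^{-O(n^{1/3})}$. This is where I would invoke the bridge/excursion estimates of~\cite{Ott+Velenik-2025a}: the bulk part, a bridge of length $\Theta(n)$ confined to a strip of width $L = \Theta(n^{1/3})$, has probability $e^{-\Theta(n/L^2)} = e^{-\Theta(n^{1/3})}$ by the standard eigenvalue/spectral-gap estimate for a walk in a slab (the Donsker–Varadhan-type cost of the ground state of the interval $[0,L]$ is $\asymp L^{-2}$ per step, times $n$ steps); the two end parts, going from $0$ to height $L$ over $m=\Theta(n^{2/3})$ steps while staying positive, have probability bounded below by a fixed power of the positive-bridge normalisation times a Gaussian-type factor $e^{-O(L^2/m)} = e^{-O(n^{1/3})}$ and a polynomial prefactor. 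Multiplying, the whole event has probability at least $e^{-Cn^{1/3}}$, and combined with the area-weight bound this yields $\kerPF_{0,n}^{n}(0,z_n) \ge e^{-cn^{1/3}}$ for suitable $c$.

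The main obstacle, I expect, is not the heuristic but the bookkeeping of the three pieces against the precise kernel normalisations in~\cite{Ott+Velenik-2025a}: one needs the lower bounds for confined bridges and for positive bridges with prescribed (order-$L$) endpoint heights to be matched correctly so that the local CLT / excursion prefactors do not degrade the exponential bound, and one must handle the $o(1)$ endpoint shift $z_n$ (harmless, since it is $o(1)$ and absorbed into the endpoint block) and the time-inhomogeneity of the increments $Y_k$ (harmless, since~\eqref{eq:conditions_steps} and the fact that the slopes lie in $\niceSlopeSet$ give uniform ellipticity and uniform control of variances of the $Y_k$). A secondary technical point is choosing $m$ and $A$ so that the climb segments genuinely connect to the corridor: one wants $L^2/m = O(n^{1/3})$, i.e.\ $m \gtrsim n^{1/3}$, and $m \ll n$ so the bulk block still has length $\Theta(n)$; $m=\Theta(n^{2/3})$ is a safe choice. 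I would carry out the argument in the order: (i) fix $A,m$; (ii) restrict to the corridor-plus-climb event and pull out the area weight; (iii) quote the confined-bridge lower bound for the bulk; (iv) quote the positive-bridge-with-endpoint lower bound for the two ends; (v) multiply.
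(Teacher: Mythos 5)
Your proposal is correct and takes essentially the same approach as the paper: restrict to a tube of width $\Theta(n^{1/3})$, observe that the area weight $\exp(-\sum_k \tfrac{\alpha_k}{n}Z_k)$ then contributes a factor $e^{-O(n^{1/3})}$, and lower-bound the probability of the confined event by $e^{-O(n^{1/3})}$. The only difference is that the paper directly cites \cite[Lemma~7.1]{Ott+Velenik-2025a} for the confined-excursion lower bound (with the tube $[0,n^{1/3}]$), whereas you sketch how one would prove such a bound by gluing two climb segments of length $\Theta(n^{2/3})$ to a bulk corridor $[L,2L]$; note a minor arithmetic slip in that sketch, $L^2/m=\Theta(1)$ rather than $O(n^{1/3})$ for $L=\Theta(n^{1/3})$, $m=\Theta(n^{2/3})$, which only makes your estimate conservative and does not affect the conclusion.
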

\begin{proof}
	By~\cite[Lemma 7.1]{Ott+Velenik-2025a}, there is \(c>0\) such that, for all \(n\) large enough,
    \begin{equation*}
        P\bigl(Z_n= z_n, \min_{i=1,\dots,n} Z_i\geq 0, \max_{i=1,\dots,n} Z_i\leq n^{1/3} \bgiven Z_0 = 0\bigr)
        \geq
        \tfrac{C}{n} e^{-cn^{1/3}}.
    \end{equation*}
    To give the idea: the excursion has a positive probability to stay in a tube of size \(n^{1/3}\) over a time of order \(n^{2/3}\). One needs to pay this for every time interval of length \(n^{2/3}\), giving the claim. Now, if \(\max_{i=1,\dots,n} Z_i\leq n^{1/3}\),
    \begin{equation*}
        \sum_{i=1}^n \tfrac{\alpha_i}{n}Z_i
        \leq
        \alpha_+ n^{1/3},
    \end{equation*}
    and thus,
    \begin{equation*}
        \kerPF_{0,n}^{n}(0,z_n)
        \geq
        e^{-\alpha_+ n^{1/3}} P\bigl(Z_n= z_n,\, \cap_{i=1}^n \{0\leq Z_i\leq n^{1/3}\} \bgiven Z_0 = 0\bigr)
        \geq
        \tfrac{C}{n} e^{-(c+\alpha_+)n^{1/3}},
    \end{equation*}
    this is the claim.
\end{proof}

We now turn to the upper bound. We first prove a ``scale \(n^{2/3}\)'' result.
\begin{lemma}
	\label{lem:strd_time_kernel:UB}
	There are \(n_0\geq 1\), \(c>0\) such that for any \(n\geq n_0\), any \(1\leq l< k\leq n\) with \(n^{2/3} \leq k-l \leq 2n^{2/3}\), and any \(x\geq 0\),
	\begin{equation*}
		\sum_{y\geq 0}\kerPF_{l,k}^{n}(x,y)
		\leq
		e^{-c},
	\end{equation*}
	where the sum is over \(y\) such that \(P(Z_k=y\given Z_l= x)>0\).
\end{lemma}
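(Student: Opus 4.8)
The plan is to show that, over a time window of length between $n^{2/3}$ and $2n^{2/3}$, the combined effect of the positivity constraint and the area-type penalization $\exp(-\sum \tfrac{\alpha_i}{n}Z_i)$ forces a uniformly bounded-below cost, no matter where the walk starts. The key observation is a dichotomy on the starting height $x$: either $x$ is large, in which case the walk typically remains at height $\Theta(x)$ for the whole window and the area penalty alone contributes $\exp(-\Theta(\tfrac{1}{n}\cdot x \cdot n^{2/3})) = \exp(-\Theta(x n^{-1/3}))$; or $x$ is of order $n^{1/3}$ or smaller, in which case the positivity constraint is the binding one, since a centred random walk run for time $\asymp n^{2/3}$ with increments of unit scale drops below $0$ with probability bounded away from $0$ unless it stays at height $\gtrsim n^{1/3}$ — and staying that high again triggers the area cost. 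I would make this quantitative by splitting at a threshold $x_0 = A n^{1/3}$ for a suitable constant $A$.

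First I would handle the easy regime $x \geq A n^{1/3}$. Here I bound $\sum_{y\geq 0}\kerPF_{l,k}^n(x,y)$ from above by dropping the indicator $\mathds{1}_{Z_i\geq 0}$ for $i<k$, keeping only the area term and the endpoint, and then observe that on the event $\{\min_{l\le i\le k} Z_i \ge x/2\}$ the penalty is at least $\exp(-\alpha_+ \cdot (x/2)\cdot (k-l)/n) \le \exp(-\tfrac{\alpha_-}{4} A)$ (using $k-l\ge n^{2/3}$), which is $\le e^{-c}$ for $A$ large; on the complementary event $\{\min Z_i < x/2\}$ the walk has moved by at least $x/2 \gtrsim A n^{1/3}$ over time $\le 2n^{2/3}$, and by a maximal (Doob or reflection-type) inequality together with the uniform exponential-moment control on the increments $Y_i$ (which are tilted versions of $X$ with tilt parameters $\gamma_k$ in a fixed compact set), this has probability at most $\exp(-\Theta(A^2))$, again $\le e^{-c}$ for $A$ large. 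A small subtlety is that the $Y_i$ are not identically distributed, but their laws are tilts of $X$ by the uniformly bounded parameters $\gamma_k\in H'([a,b])$, so they satisfy uniform (in $i$ and $n$) sub-exponential tail bounds; I would record this as a preliminary remark or cite the corresponding statement from \cite{Ott+Velenik-2025a}.

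Second, the regime $0 \le x < A n^{1/3}$, which I expect to be the main obstacle. Here I would again drop the area term where convenient and argue via positivity. Consider the midpoint $m = \lfloor (l+k)/2\rfloor$, so that both $[l,m]$ and $[m,k]$ have length $\asymp n^{2/3}$. I decompose on the value $Z_m = w$. If $w \le B n^{1/3}$ for a constant $B$, then on the second half the walk starts below $B n^{1/3}$ and must stay non-negative for a time $\asymp n^{2/3}$; a barrier/ballot-type estimate for the bridge (or the functional CLT comparison to Brownian motion, which applies uniformly thanks to the uniform moment bounds on the $Y_i$) shows this costs a factor bounded away from $1$, uniformly — intuitively, a diffusively-spreading walk of scale $n^{1/3}$ run for diffusive time $n^{2/3}$ starting from height $O(n^{1/3})$ stays positive with probability $\le 1-\delta$. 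If instead $w > B n^{1/3}$, then on the \emph{first} half the walk went from $x < A n^{1/3}$ up to $w > B n^{1/3}$, so it spent a macroscopic (i.e.\ $\Theta(n^{2/3})$) fraction of the first half at height $\gtrsim (B-A) n^{1/3}/2$ — here I need a quantitative statement that a walk cannot climb to height $\gtrsim n^{1/3}$ and back down without accumulating area $\gtrsim n \cdot (\text{const})$, which I would get either from a deterministic pigeonhole on the trajectory (if the walk exceeds height $h$ at some time, it exceeds $h/2$ for a stretch whose length is controlled in probability by the increment tails) combined with the area penalty $\exp(-\tfrac{\alpha_-}{n}\sum Z_i)$, yielding a penalty $\le \exp(-\Theta((B-A)^2))$. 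Choosing $B \gg A$ and then $A$ large closes the argument: in every case the contribution is $\le e^{-c}$ for a uniform $c>0$. Throughout, the only genuinely delicate point is making the "climb costs area" heuristic rigorous with constants uniform in $n$, $l$, $k$, $x$; I would isolate this as a short lemma, proving it via the elementary bound that for the trajectory to go from $\le A n^{1/3}$ to $\ge B n^{1/3}$ it must pass through every level in between, spending at each a random time with uniformly-controlled lower tail, so that with probability $\ge 1/2$ the area below the path over the first half is $\ge c_0 (B-A)^2 n$, making the penalty $\le \tfrac12 + \tfrac12 e^{-\alpha_- c_0 (B-A)^2}$.
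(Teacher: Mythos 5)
Your split at the threshold $x \asymp A n^{1/3}$ matches the paper's, and your treatment of the large-$x$ regime is essentially the paper's argument: the paper splits on $\{\max_i |W_i| \leq K n^{1/3}/2\}$ vs.\ its complement, bounds the area penalty by $e^{-\alpha_- K/2}$ on the first event, and bounds the complementary probability by Doob's $L^2$-maximal inequality (giving a $\Theta(1/K^2)$ bound, weaker than the $e^{-\Theta(K^2)}$ you propose but equally sufficient). The genuine divergence is in the small-$x$ case, where you are doing far more work than the paper, and in the part you yourself flag as the most delicate.

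The paper's small-$x$ argument is almost trivially short: when $x \leq K n^{1/3}$, drop \emph{all} indicators and the entire area factor except the single constraint $Z_k \geq 0$, i.e.\ $x + W_L \geq 0$, and note that this implies $W_L \geq -K n^{1/3}$. Since $W_L$ is a sum of $L \in [n^{2/3}, 2n^{2/3}]$ independent centred increments with uniformly bounded (above and below) variances, an inhomogeneous CLT gives $P(W_L \geq -K n^{1/3}) \leq 1 - \tfrac12 P\bigl(\mathcal{N}(0,1) \geq cK\bigr) < 1$ uniformly. No midpoint decomposition, no ballot estimate, no occupation-time / area accounting. The lemma only needs \emph{some} $c > 0$, so a bound like $1 - \varepsilon(K)$ with $\varepsilon(K)$ tiny-but-positive is enough, and positivity at the single endpoint $k$ already supplies it.

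Two further remarks on the internal logic of your small-$x$ plan. First, the ``climb costs area'' lemma is unnecessary even within your decomposition: if $Z_m = w > B n^{1/3}$, you can simply apply your large-$x$ argument to the \emph{second} half of the window starting from $Z_m = w$ (both halves have length $\asymp n^{2/3}$), which avoids the delicate occupation-time estimate entirely. Second, your bound for the small-$w$ sub-case — ``a walk of scale $n^{1/3}$ starting from height $O(n^{1/3})$ stays positive with probability $\leq 1-\delta$'' — has $\delta = \delta(B) \to 0$ as $B \to \infty$, since a walk started at height $B n^{1/3}$ stays positive over time $n^{2/3}$ with probability tending to $1$. This is not fatal (you only need $\delta(B) > 0$ for the $B$ you eventually fix), but it means your final $c$ is a $\min$ over a two-parameter $(A,B)$ optimization whose dependence you would need to track carefully, whereas the paper's argument has only one free constant $K$. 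The approach is salvageable, but the paper's observation — that for $x = O(n^{1/3})$, positivity at the endpoint alone already costs a constant via the CLT — is the clean shortcut you missed.
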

\begin{proof}
	Let \(n\) be large enough so that \(\alpha_i \geq \alpha_-\) for all \(i\)'s. Let \(K>0\) to be fixed large later. Let \(l,k\) be as in the statement and set \(L=k-l\), \(W_0=0\), \(W_{i} = W_{i-1}+ Y_{l+i}\) (recall that \(Y_j = Z_j-Z_{j-1}\), \(i=1,\dots, n\) form an independent centred sequence). First, if \(x\leq Kn^{1/3}\), we have,
	\begin{equation*}
		\sum_{y\geq 0}\kerPF_{l,k}^{n}(x,y)
		=
		E\Bigl(\prod_{i=1}^L e^{-\alpha_{l+i}(x+W_i)/n} \mathds{1}_{x+W_i\geq 0}\Bigr)
		\leq
		P(W_L \geq -Kn^{1/3}).
	\end{equation*}
	By the (inhomogeneous) CLT, see for example~\cite[Chapter V, Theorem 3]{Petrov-1995}, that quantity is less than \(1-\frac{1}{2}P(\calN(0,1) \geq cK)< 1\) for some \(c\geq 0\) uniform over \(n\) large enough, and \(k,l\) as in the statement. Consider now \(x\geq Kn^{1/3}\). Then,
	\begin{multline*}
		\sum_{y\geq 0}\kerPF_{l,k}^{n}(x,y)
		\\
		\leq
		E\Bigl(\mathds{1}_{\max_{1\leq i \leq L}|W_i| \leq Kn^{1/3}/2}\prod_{i=1}^L e^{-\alpha_-(Kn^{1/3}+W_i)/n}\Bigr)
		+
		P(\max_{i=1,\dots,L}|W_i| > Kn^{1/3}/2)
		\\
		\leq
		e^{-\alpha_- K/2}
		+
		P(\max_{i=1,\dots,L}|W_i| > Kn^{1/3}/2)
	\end{multline*}
	where we used \(L\geq n^{2/3}\). Now, \((W_i)_{i\geq 0}\) is a martingale. So, by Doob's submartingale inequality,
	\begin{equation*}
		P(\max_{i=1,\dots,L}|W_i| > Kn^{1/3}/2) \leq \frac{4 E(W_L^2)}{K^2 n^{2/3}}.
	\end{equation*}
	But the variables \(Y_i\)'s have uniformly bounded second moment, so \(E(W_L^2)\leq cL \leq 2cn^{2/3}\) for some \(c\) depending on \(h\) and the law of the initial sequence \(X_1,X_2,\dots\) only. Taking \(K\) large enough, the last display is less than \(1/2\). This concludes the proof.
\end{proof}

We are now ready to prove the upper bound in Theorem~\ref{thm:unif_curv_free_energy}. Looking at Lemma~\ref{lem:large_dev_extraction}, the claim follows from the next Lemma.
\begin{lemma}
	\label{lem:thm_unif_curv_free_energy:UB}
	There are \(n_0\geq 1\), \(c>0\) such that for any \(n\geq n_0\),
	\begin{equation*}
		\kerPF_{0,n}^{n}(0,z_n)
		\leq
		e^{-cn^{1/3}},
	\end{equation*}
	where \(z_n = \lceil h_n(n)\rceil- h_n(n)\).
\end{lemma}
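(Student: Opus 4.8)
The plan is to decompose the time interval $\{0,\dots,n\}$ into roughly $n^{1/3}$ consecutive blocks of length between $n^{2/3}$ and $2n^{2/3}$ and apply the "scale $n^{2/3}$" estimate of Lemma~\ref{lem:strd_time_kernel:UB} to each block via the Chapman--Kolmogorov composition of the kernels $\kerPF$. Concretely, fix a partition $0 = t_0 < t_1 < \dots < t_m = n$ with $m = \Theta(n^{1/3})$ and $n^{2/3}\le t_{j+1}-t_j\le 2n^{2/3}$ for each $j$. Since the area penalty is additive over disjoint time intervals and the indicators $\mathds{1}_{Z_i\ge 0}$ only remove mass, the kernel semigroup satisfies
\[
\kerPF^n_{0,n}(0,z_n) = \sum_{y_1,\dots,y_{m-1}\ge 0} \prod_{j=0}^{m-1} \kerPF^n_{t_j,t_{j+1}}(y_j,y_{j+1}),
\]
with $y_0=0$, $y_m=z_n$, where each intermediate sum ranges over admissible (positive-probability) values. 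Bounding the last factor trivially by $1$ and iterating Lemma~\ref{lem:strd_time_kernel:UB} — which gives $\sum_{y\ge 0}\kerPF^n_{t_j,t_{j+1}}(x,y)\le e^{-c}$ uniformly in the starting point $x\ge 0$ — one peels off the blocks from left to right: summing first over $y_m$ (bounded by $1$... actually the right order is to sum over $y_{m-1}$ first using the $m{-}1$-st block, picking up $e^{-c}$, then over $y_{m-2}$, and so on). After $m-1$ applications one is left with $\kerPF^n_{t_0,t_1}(0,y_1)$ summed over $y_1$, giving one more factor $e^{-c}$, hence $\kerPF^n_{0,n}(0,z_n)\le e^{-cm} = e^{-c'n^{1/3}}$.

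The one point that requires a little care is the bookkeeping of the iterated sum: Lemma~\ref{lem:strd_time_kernel:UB} bounds $\sum_{y}\kerPF^n_{l,k}(x,y)$ uniformly over $x\ge 0$, so I want to perform the sums in an order that always leaves a single free starting variable in the innermost kernel. Writing $F_j(x) := \sum_{y\ge 0}\kerPF^n_{t_j,t_{j+1}}(x,y)$ (sum over admissible $y$), the Chapman--Kolmogorov identity and $F_{m-1}\le e^{-c}$ give
\[
\kerPF^n_{0,n}(0,z_n)\le \sum_{y_1,\dots,y_{m-1}\ge 0}\Bigl(\prod_{j=0}^{m-2}\kerPF^n_{t_j,t_{j+1}}(y_j,y_{j+1})\Bigr)F_{m-1}(y_{m-1})\cdot \mathds{1}_{\{y_m=z_n\}}\le e^{-c}\sum_{y_1,\dots,y_{m-2}\ge 0}\prod_{j=0}^{m-2}\kerPF^n_{t_j,t_{j+1}}(y_j,y_{j+1}),
\]
and then one keeps going; the upper bound $\alpha_i\le\alpha_+$ and $\alpha_i\ge\alpha_-$ for $n\ge n_0$ (Claim~\ref{claim:effective_potential}) guarantees the hypotheses of Lemma~\ref{lem:strd_time_kernel:UB} apply to every block simultaneously with one common constant $c$. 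Note also that the target value $z_n\in[0,1)$ is admissible since $\gcd$-aperiodicity of the increments makes $P(Z_n=z_n\mid Z_0=0)>0$ for $n$ large (and in any case the whole kernel is defined so that the sum is over admissible $y$, so no issue arises).

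I do not expect any genuine obstacle here — the essential work has already been done in Lemma~\ref{lem:strd_time_kernel:UB}. The only mildly delicate ingredient is that the $c$ in Lemma~\ref{lem:strd_time_kernel:UB} must be uniform over all admissible block lengths in $[n^{2/3},2n^{2/3}]$ and over all starting points, which is exactly what that lemma asserts; combined with $m\ge \lfloor n^{1/3}/2\rfloor$ one concludes $\kerPF^n_{0,n}(0,z_n)\le e^{-c m}\le e^{-(c/2)n^{1/3}}$ for $n$ large enough, which is the claim with $z_n=\lceil h_n(n)\rceil - h_n(n)$.
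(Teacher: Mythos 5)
Your proposal is correct and takes essentially the same route as the paper: both decompose $\{0,\dots,n\}$ into $\Theta(n^{1/3})$ blocks of length in $[n^{2/3},2n^{2/3}]$, write $\kerPF^n_{0,n}(0,z_n)$ via Chapman--Kolmogorov, and iterate Lemma~\ref{lem:strd_time_kernel:UB} (with its uniform-in-$x$ constant) once per block to get $e^{-\Theta(n^{1/3})}$. The only cosmetic difference is that the paper simply bounds $\kerPF^n_{0,n}(0,z_n)$ by the full sum over the final endpoint $x_\ell$ rather than peeling from the right with the target $z_n$ fixed, which sidesteps the bookkeeping you flag as ``mildly delicate.''
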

\begin{proof}
	Let \(\ell = \lfloor n^{1/3}\rfloor\). Let \(L_0=0<L_1<L_2<\dots<L_{\ell}\) be such that
	\begin{equation*}
		L_{\ell}= n,\quad n^{2/3}\leq L_{i}- L_{i-1}\leq 2n^{2/3}\quad (i=1,\dots,\ell).
	\end{equation*}
	For \(i=1,\dots, \ell\), let \(I_i = \{x\in [0,+\infty):\, P(Z_{L_i} = x \given Z_0=0) >0\}\). This is (a subset of) the intersection of \([0,+\infty)\) with some translate of \(\Z\). Then,
	\begin{equation*}
		\kerPF_{0,n}^{n}(0,z_n) 
		\leq
		\sum_{x_1\in I_1}\dots \sum_{x_{\ell}\in I_{\ell}} \prod_{i=1}^{\ell}\kerPF_{L_{i-1},L_i}^{n}(x_{i-1},x_i) 
	\end{equation*}
	where \(x_0=0\). Using Lemma~\ref{lem:strd_time_kernel:UB}, this quantity is less than \(e^{-c\ell}\) which gives the result.
\end{proof}

\subsection{Proof of Theorem~\ref{thm:unif_curv_typ_height_fluctuations}}
\label{subsec:dev_proba_moments}

Introduce \((W_{i})_{i=1}^n\) a random variable with law given by
\begin{equation*}
    E\bigl(f(W)\bigr) = \frac{1}{\kerPF_{0,n}^{n}(0,z_n)} E\Bigl(f(Z)\mathds{1}_{Z_n=z_n} \prod_{i=1}^n \mathds{1}_{Z_i\geq 0} e^{-\frac{\alpha_i}{n}Z_i} \Bgiven Z_0 = 0\Bigr).
\end{equation*}
\((W_{i})_{i=1}^n\) is a non-negative random field with the Markov property. Moreover, by Claims~\ref{claim:change_of_measure} and~\ref{claim:effective_potential}, for any measurable function \(f\)
\begin{equation*}
    E\bigl(f(S) \bgiven S_n =\lceil h_n(n) \rceil, S\geq h_n\bigr)
    =
    E\bigl(f(W+h_n)\bigr).
\end{equation*}
Introduce also the events for \(0\leq l\leq k\leq n\),
\begin{equation*}
    A^{\lambda}_{l,k} = \cap_{i=l}^k \{W_i\geq \lambda n^{1/3}\},
    \quad
    \bar{A}^{\lambda}_{l,k} = \Bigl\{\sum_{i=l}^{k} W_i\geq \lambda n^{1/3}(k-l+1)\Bigr\},
\end{equation*}
and set \(A^{\lambda}_{l,k} = A^{\lambda}_{l\vee 0,k\wedge n}\) for \(l< 0\) and/or \(k> n\). Note moreover that \(A^{\lambda}_{l,k} \subset \bar{A}^{\lambda}_{l,k}\).

\begin{lemma}
	\label{lem:high_exc_condUB}
    Let \(\beta\in(0,1/6)\). There are \(n_0,\lambda_0, r_0\geq 0\) and \(c>0\) such that for any \(n\geq n_0\), any \(\lambda_0\leq \lambda\leq n^{\beta}\), any \(1\leq k\leq n\), and any \(L\geq r_0 \sqrt{\lambda}n^{2/3}\) with \(k+L\leq n\),
    \begin{equation*}
        P\bigl( \bar{A}^{\lambda}_{k,k+L} \bgiven (A^{\lambda}_{k-L,k-1})^c, (A^{\lambda}_{k+L+1,k+2L})^c\bigr)
        \leq
        \exp(-c\tfrac{\lambda L}{n^{2/3}}).
    \end{equation*}
\end{lemma}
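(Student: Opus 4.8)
The overall strategy is to work with the conditioned field $W$ and the area‑tilted kernel $\kerPF$, and to reduce — via the Markov property — to an estimate for a single area‑tilted non‑negative bridge that is pinned at two \emph{low} points; this is precisely what the conditioning events buy us. First, on the conditioning event let $j_1$ be the last time in $\{(k-L)\vee 0,\dots,k-1\}$ and $j_2$ the first time in $\{k+L+1,\dots,(k+2L)\wedge n\}$ at which $W$ is $<\lambda n^{1/3}$ (both exist: if a window reaches $0$ or $n$ the corresponding event $(A^\lambda)^c$ is automatic, since $W_0=0$ and $W_n<\lambda n^{1/3}$ for $\lambda\ge\lambda_0$, $n\ge n_0$). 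Then $[k,k+L]\subset(j_1,j_2)$ and $\ell:=j_2-j_1\in[L+2,3L-1]$. Decomposing the conditioning event according to the values of $(j_1,j_2)$ and $(W_{j_1},W_{j_2})=(a,b)$ and applying the strong Markov property of $W$, it suffices to bound the conditional probability of $\bar A^\lambda_{k,k+L}$ given this data \emph{uniformly} over the admissible $j_1,j_2$ and over $a,b\in[0,\lambda n^{1/3})$; crucially no union bound over $(j_1,j_2)$ is incurred, because averaging a uniform conditional bound against the law of $(j_1,j_2)$ reconstructs the conditioning event.

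Conditionally, the walk on $[j_1,j_2]$ is an area‑tilted non‑negative bridge from $a$ to $b$, and $\bar A^\lambda_{k,k+L}$ together with the definitions of $j_1,j_2$ (which force $W_i\ge\lambda n^{1/3}$ on $(j_1,k-1]\cup[k+L+1,j_2)$) forces the \emph{total} area of this bridge to be at least $\lambda n^{1/3}(\ell-1)$. So the task reduces to showing that such a bridge, of length $\ell\ge L\ge r_0\sqrt\lambda n^{2/3}$ and with endpoints in $[0,\lambda n^{1/3})$, has total area $\ge\lambda n^{1/3}(\ell-1)$ with probability $\le e^{-c\lambda\ell/n^{2/3}}\le e^{-c\lambda L/n^{2/3}}$. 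To prove this, cut $[j_1,j_2]$ into $M\asymp \ell/(\sqrt\lambda n^{2/3})$ consecutive blocks of lengths in $[\sqrt\lambda n^{2/3},2\sqrt\lambda n^{2/3}]$ (possible for $r_0$ large), so that $\lambda^{3/2}M\asymp\lambda\ell/n^{2/3}$. The elementary but decisive observation — a block‑scale counterpart of Lemma~\ref{lem:strd_time_kernel:UB} — is that because $\alpha_i\ge\alpha_-$ and each block $B$ has length $\ge\sqrt\lambda n^{2/3}$, the $\kerPF$‑mass of trajectories whose area over $B$ exceeds $\Lambda n^{1/3}\abs{B}$ is, for \emph{any} entrance point and any $\Lambda\ge 0$, at most $\exp(-\alpha_-\Lambda\sqrt\lambda)$ (the area tilt alone contributes this factor and everything else is $\le 1$), with no CLT input needed.

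Writing $\Lambda_s=(n^{1/3}\abs{B_s})^{-1}\sum_{i\in B_s}W_i$ and $v_s=\lfloor\Lambda_s\rfloor$, on the high‑area event one has $\sum_s v_s\ge c_0\lambda M$. Discretising over $\vec v$ and iterating the block mass bound along the blocks via the Markov property gives, for each admissible $\vec v$, a factor $\prod_s e^{-\alpha_-v_s\sqrt\lambda}=e^{-\alpha_-\sqrt\lambda\sum_s v_s}\le e^{-\alpha_-c_0\lambda^{3/2}M}$; summing the geometric series over $\vec v$ (using the crude count $\#\{\vec v:\sum_s v_s=\rho\}\le 2^{\rho+M}$, whose entropy $2^{O(\lambda M)}$ is absorbed by $e^{-\alpha_-c_0\lambda^{3/2}M}$ once $\lambda_0$ is large, since $\lambda^{3/2}\gg\lambda$) yields $e^{-c\lambda^{3/2}M}\le e^{-c'\lambda L/n^{2/3}}$, as required; the $M$‑dependent prefactors produced by the decompositions are absorbed into a slight decrease of the constant, since $\lambda L/n^{2/3}\ge r_0\lambda^{3/2}\ge r_0$.

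The main obstacle is to carry out the two reductions above \emph{cleanly}: one must control the ratios of (point‑to‑point) partition functions that arise so that the local‑central‑limit and normalising factors cancel rather than accumulate as powers of $n$ — note that $\lambda L/n^{2/3}$ can be as small as the constant $r_0\lambda_0^{3/2}$, so no polynomial‑in‑$n$ prefactor may survive. Concretely, the denominator $\kerPF^n_{j_1,j_2}(a,b)$ must be bounded below by $e^{-O(\lambda^{3/2})}$ times the correct scaling factor, uniformly over $a,b\le\lambda n^{1/3}$, and the restricted numerator bounded above compatibly; this finite‑size‑scaling bookkeeping, together with the tube/excursion lower bounds needed for the denominator, is exactly what the results of~\cite{Ott+Velenik-2025a} supply. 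It is also here that the hypothesis $\beta<1/6$ enters: it keeps the transversal scale $\lambda n^{1/3}$ well below the diffusive scale $\sqrt n$, so that the block estimate stays in the universal regime rather than probing the random‑walk–specific large‑deviation tails (the reason the upper tail in Theorem~\ref{thm:unif_curv_typ_height_fluctuations} switches to $e^{-c\lambda}$ beyond $\lambda=n^\beta$).
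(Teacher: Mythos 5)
Your strategy matches the paper's at a high level: decompose via the Markov property at the times $\tau_\pm$ bracketing $[k,k+L]$ at which $W$ is below $\lambda n^{1/3}$ (you take the closest such times, the paper the farthest; this only shuffles constants), then control a ratio of area-tilted kernels with endpoints in $[0,\lambda n^{1/3})$. The numerator treatment differs: you introduce a block decomposition at scale $\sqrt\lambda n^{2/3}$ and an entropy sum over discretised block averages $\vec v$, whereas the paper simply extracts the full area-tilt factor $e^{-\alpha_-\lambda L/n^{2/3}}$ from the hypothesis $\sum_{i=k}^{k+L}Z_i\ge\lambda n^{1/3}L$ and applies a single ballot-type estimate to what is left. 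Your version is correct but strictly more work; also note that your ``everything else is $\le 1$'' per-block bound would, if applied literally, discard the polynomial scaling factor and produce a ratio that blows up like a power of $n$ — one must propagate the per-block positive-bridge kernel mass so that the ballot factor $L^{-3/2}\min(a+1,\sqrt L)\min(b+1,\sqrt L)$ survives and cancels against the denominator. You do flag this in the final paragraph, but it is not bookkeeping: it is essential.

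The genuine gap is the denominator lower bound, which is the heart of the lemma. Two points. First, the stated target — that $\kerPF^n_{j_1,j_2}(a,b)$ be bounded below by the scaling factor times $e^{-O(\lambda^{3/2})}$ — is too strong: the correct exponent is $\Theta(\lambda L/n^{2/3})$, which exceeds $\lambda^{3/2}$ as soon as $L\gg\sqrt\lambda\,n^{2/3}$, and what is actually needed is that its constant be strictly smaller than the one extracted from the numerator. Second, attributing the bound to the results of~\cite{Ott+Velenik-2025a} undersells the work involved. That reference supplies tube/excursion estimates for the \emph{positive-walk bridge}, but one must still design the event on which they are applied: force the path to descend from height at most $2\lambda n^{1/3}$ to a tube of height $\Theta(n^{1/3})$ over a tuned time $M=\lfloor\tfrac{\alpha_-}{16\alpha_+}L\rfloor$, stay in the low tube for the remaining $\Theta(L)$ steps, then ascend; then verify that the resulting area cost $\tfrac{\alpha_-}{4}\tfrac{\lambda L}{n^{2/3}}$ plus the Gaussian transition cost $O(\lambda^2 n^{2/3}/L)$ is dominated by the numerator's $\alpha_-\tfrac{\lambda L}{n^{2/3}}$ once $r_0$ and $\lambda_0$ are large. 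Without this construction and the accompanying constant-chasing, the argument does not close.
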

\begin{proof}
    We will import results from~\cite{Ott+Velenik-2025a} in several instances. They demand a certain relations between the endpoints of the walk bridge/excursion and the number of steps. In the non-trivial cases, the wanted relation will be that \(x,y\) are less than \(CL^{\alpha}\) for some \(\alpha < 2/3\) and \(C>0\) fixed. This will be the case as:
    \begin{equation*}
        x,y \leq \lambda n^{1/3} \leq CL^{\alpha}
    \end{equation*}
    if \(\lambda n^{1/3} \leq Cr_0^{\alpha} \lambda^{\alpha/2} n^{2\alpha/3}\). This last condition is fulfilled if
    \begin{equation*}
        \lambda^{\frac{2-\alpha}{2}} \leq Cr_0^{\alpha} n^{\frac{2\alpha-1}{3}}
        \impliedby n^{\beta\frac{2-\alpha}{2}} \leq Cr_0^{\alpha}n^{\frac{2\alpha-1}{3}},
    \end{equation*}
    this last condition is verified for \(n\) large enough if
    \begin{equation*}
        \beta\tfrac{2-\alpha}{2} < \tfrac{2\alpha-1}{3},
    \end{equation*}
    which has a solution \(\alpha\in (0,2/3)\) as long as \(\beta\in (0,1/6)\), which is our hypotheses.
    
    Use the shorthands \(B_{L}^- = (A_{k-L,k-1}^{\lambda})^c\) and \(B_L^+ = (A_{k+L+1,k+2L}^{\lambda})^c\). Note that under \(B_{L}^-\), there is \(t\in \{k-L,\dots, k-1\}\) such that \( W_t< \lambda n^{1/3}\), and under \(B_L^+\) there is \(t\in \{k+L+1,\dots, k+2L\}\) such that \( W_t< \lambda n^{1/3}\). Define
	\begin{gather*}
		\tau_+ = \max \bigr\{t\in \{k+L+1,\dots, k+2L\}:\, W_t<\lambda n^{1/3}\bigr\},
		\\
		\tau_- = \min \bigr\{t\in \{k-L,\dots, k-1\}:\, W_t<\lambda n^{1/3}\bigr\}.
	\end{gather*}
    Then, using Markov's property,
	\begin{align*}
		&P\bigl(\bar{A}_{k,k+L}^{\lambda} \bgiven B_{L}^-, B_{L}^+\bigr)
		\\
        &\quad=
		\sum_{s,t}\sum_{x,y} P\bigl(W_{s}=x, \tau_- = s, W_{t} = y, \tau_+ = t \bgiven B_{L}^-, B_{L}^+\bigr) P\bigl(\bar{A}_{k,k+L}^{\lambda} \bgiven W_s=x, W_t = y\bigr).
	\end{align*}
	For \(k-L\leq s <k\), \(k+L< t\leq k+2L\), and \(0\leq x,y< \lambda n^{1/3}\), we need to bound
	\begin{multline}
    \label{eq:prf:lem:high_exc_condUB:num_Den_decomp}
		P\bigl(\bar{A}_{k,k+L}^{\lambda} \bgiven W_s=x, W_t = y\bigr)
		\\
		=
		\frac{1}{\kerPF_{s,t}^n(x,y)} E\Bigl( \mathds{1}_{Z_t=y} \mathds{1}_{\sum_{j=k}^{k+L} Z_{j}\geq \lambda n^{1/3}L }\prod_{i= s+1}^t e^{-\frac{\alpha_i}{n} Z_i} \mathds{1}_{Z_i \geq 0} \Bgiven Z_{s} = x\Bigr).
	\end{multline}
	Now, using the constraint on the sum of the \(Z_i\)'s between \(k\) and \(k+L\), the expectation term is less than (for \(n\) large enough)
	\begin{multline}
		\label{eq:prf:lem:high_exc_condUB:num_UB}
		P\bigl(Z_t=y,\, \cap_{i=s+1}^{t} \{Z_i\geq 0\} \bgiven Z_s = x\bigr)\exp(-\alpha_- \tfrac{L  \lambda}{n^{2/3}})
        \\
        \leq
        \tfrac{C\min(y+1,\sqrt{L})\min(x+1,\sqrt{L})}{L^{3/2}}\exp(-\alpha_- \tfrac{L  \lambda}{n^{2/3}})
	\end{multline}
	where we used~\cite[Lemma 6.2]{Ott+Velenik-2025a}.
    We now need to lower bound the partition function \(\kerPF_{s,t}^n(x,y)\) for \(0\leq x,y\leq \lambda n^{1/3}\), and \(t-s> L\).
    
    \begin{figure}
    	\centering
    	\includegraphics[width=\textwidth]{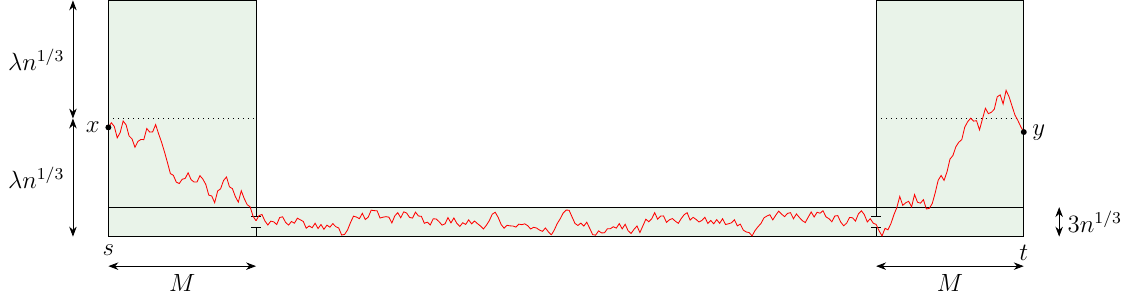}
    	\caption{Construction in the proof of Lemma~\ref{lem:high_exc_condUB}. The path must remain in the shaded region and enter the narrow tube through the openings at each extremity. The endpoints must stay below the dotted lines.}
    	\label{fig:lem:high_exc_condUB}
    \end{figure}
    Let \(M=\lfloor \frac{\alpha_-}{16 \alpha_+} L\rfloor\). By forcing \(Z\) to stay below level \(2\lambda n^{1/3}\) during the time intervals \(I_-=\{s,\dots, s+M-1\}\) and \(I_+=\{t-M+1,\dots,t\}\), to stay below level \(3n^{1/3}\) during the time interval \(I_0 = \{s+M+1,\dots, t-M-1\}\), and forcing \(n^{1/3} \leq Z_{s+M},Z_{t-M}\leq 2n^{1/3}\) (see Figure~\ref{fig:lem:high_exc_condUB}), we have the lower bound
    \begin{multline*}
		\kerPF_{s,t}^n(x,y)
		\geq
		\exp(-\tfrac{2\alpha_+\lambda}{n^{2/3}} 2 M  - \tfrac{3\alpha_+}{n^{2/3}} (t-s-2M) )
		P\Bigl(\cap_{i\in I_-\cup I_+} \{0\leq Z_i \leq 2\lambda n^{1/3}\},\\ \cap_{i\in I_0} \{0\leq Z_i \leq 3n^{1/3}\}, n^{1/3}\leq Z_{s+M},Z_{t-M}\leq 2n^{1/3}, Z_t=y \Bgiven Z_{s} = x\Bigr).
	\end{multline*}
	Now, for \(n^{1/3}\leq u,v\leq 2n^{1/3}\) admissible values for \(Z_{s+M},Z_{t-M}\) respectively, we have the following estimates.
    \begin{itemize}
        \item First, as \(\tfrac78L \leq t-M-(s+M) \leq 3L\),
        \begin{equation*}
            P\bigl(Z_{t-M} = v,\, \cap_{i\in I_0} \{0\leq Z_i \leq 3n^{1/3}\} \bgiven Z_{s+M} = u\bigr)
            \geq
            \tfrac{C}{n^{1/3}}\exp(-c\tfrac{L}{n^{2/3}})
        \end{equation*}
        for some \(c,C>0\) by~\cite[Theorem 5.5]{Ott+Velenik-2025a}.
        \item Then, there are \(C,c>0\) such that
        \begin{multline*}
            P\bigl(\cap_{i\in I_-} \{0\leq Z_i \leq 2\lambda n^{1/3}\}, Z_{s+M}=u \bgiven Z_{s} = x\bigr)
            \\
            \geq
            \tfrac{C\min(x+1,\sqrt{L})n^{1/3}}{L^{3/2}}\exp(-c\tfrac{(x-u)^2}{L})
            \geq
            \tfrac{C\min(x+1,\sqrt{L})n^{1/3}}{L^{3/2}}\exp(-c\tfrac{\lambda^2n^{2/3}}{L})
        \end{multline*}
        and
        \begin{multline*}
            P\bigl(\cap_{i\in I_+} \{0\leq Z_i \leq 2\lambda n^{1/3}\}, Z_{t} = y \bgiven Z_{t-M}=v \bigr)
            \\
            \geq
            \tfrac{C\min(y+1,\sqrt{L})n^{1/3}}{L^{3/2}}\exp(-c\tfrac{(y-v)^2}{L})
            \geq
            \tfrac{C\min(y+1,\sqrt{L})n^{1/3}}{L^{3/2}}\exp(-c\tfrac{\lambda^2n^{2/3}}{L})
        \end{multline*}
        by~\cite[Theorem 7.3]{Ott+Velenik-2025a} and the fact that \(\min(\sqrt{L},u)\geq Cn^{1/3}\) as \(\lambda\) is large enough (larger than \(1\)), and the same for \(v\).
    \end{itemize}
    Using the above observations, the fact that \(\tfrac78L \leq t-s-2M \leq 3L\), and summing over the allowed values \(n^{1/3}\leq u,v\leq 2n^{1/3}\) for \(Z_{s+M},Z_{t-M}\), we get
    \begin{align*}
		\kerPF_{s,t}^n(x,y)
		&\geq
		C\tfrac{\min(y+1,\sqrt{L})\min(x+1,\sqrt{L})n}{L^3}\exp(-2c\tfrac{\lambda^2n^{2/3}}{L}-c\tfrac{L}{n^{2/3}} -\tfrac{4\alpha_+\lambda}{n^{2/3}} M  - \tfrac{9\alpha_+}{n^{2/3}}L )
        \\
        &\geq
		C\tfrac{\min(y+1,\sqrt{L})\min(x+1,\sqrt{L})n}{L^3}\exp(-2c\tfrac{\lambda^2n^{2/3}}{L}-c\tfrac{L}{n^{2/3}} - \tfrac{\alpha_- \lambda}{4n^{2/3}} L  - \tfrac{9\alpha_+}{n^{2/3}}L )
        \\
        &\geq
		C\tfrac{\min(y+1,\sqrt{L})\min(x+1,\sqrt{L})n}{L^3}\exp(-\tfrac{\alpha_- \lambda}{2n^{2/3}} L )
	\end{align*}
	where we plugged in the value of \(M\) in the second line, and took \(\lambda_0, r_0\) large enough in the third so that for \(\lambda\geq \lambda_0\) and \(L\geq r_0\sqrt{\lambda}n^{2/3}\),
    \begin{equation*}
        c\tfrac{L}{n^{2/3}} + \tfrac{9\alpha_+}{n^{2/3}}L \leq \tfrac{\alpha_- \lambda}{8n^{2/3}} L,
        \quad
        2c\tfrac{\lambda^2n^{2/3}}{L} \leq \tfrac{\alpha_- \lambda}{8n^{2/3}} L.
    \end{equation*}
    Plugging this bound and~\eqref{eq:prf:lem:high_exc_condUB:num_UB} in~\eqref{eq:prf:lem:high_exc_condUB:num_Den_decomp}, we obtain
    \begin{equation*}
        P\bigl(A_{k,k+L}^{\lambda} \bgiven W_s=x, W_t = y\bigr)
		\leq
		\frac{CL^{3/2}}{n}\exp(-\alpha_- \tfrac{L  \lambda}{2n^{2/3}})
        \leq
		\exp(-c \tfrac{L  \lambda}{n^{2/3}}),
    \end{equation*}
    once \(\lambda_0\) is taken large enough.
\end{proof}

We then deduce from this result a similar bound but for the un-conditional measure.
\begin{lemma}
	\label{lem:high_exc_UB}
	Let \(\beta\in (0,1/6)\). There are \(n_0,\lambda_0, r_0\geq 0\), and \(c>0\) such that for any \(n\geq n_0\), any \(\lambda_0 \leq \lambda \leq n^{\beta}\), \(1\leq  k \leq n\), \(L\geq r_0\sqrt{\lambda} n^{2/3}\),
	\begin{equation*}
		P\bigl(\bar{A}_{k,k+L}^{\lambda} \bigr)
		\leq
		\exp(-c\tfrac{\lambda L}{n^{2/3}}).
	\end{equation*}
\end{lemma}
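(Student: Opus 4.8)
The goal is to pass from the conditional estimate of Lemma~\ref{lem:high_exc_condUB} to an unconditional one. The natural idea is a decomposition of the time axis into blocks of length $L$ and an averaging/union argument: on the event $\bar A^\lambda_{k,k+L}$ (the average of $W$ on $[k,k+L]$ exceeds $\lambda n^{1/3}$), at least one of a few nearby blocks must fail to be entirely above level $\lambda n^{1/3}$ — indeed, if both the block to the left, $[k-L,k-1]$, and the block to the right, $[k+L+1,k+2L]$, were entirely above $\lambda n^{1/3}$, then the field would sit high over an interval of length $3L \gg \lambda^{1/2} n^{2/3}$, which is itself a very unlikely (``high excursion'') event controlled by the same kernel estimates. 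So I would write
\[
    P\bigl(\bar A^\lambda_{k,k+L}\bigr)
    \le
    P\bigl(\bar A^\lambda_{k,k+L},\,(A^\lambda_{k-L,k-1})^c,\,(A^\lambda_{k+L+1,k+2L})^c\bigr)
    + P\bigl(A^\lambda_{k-L,k-1}\bigr) + P\bigl(A^\lambda_{k+L+1,k+2L}\bigr).
\]
The first term is bounded by $\exp(-c\lambda L/n^{2/3})$ directly from Lemma~\ref{lem:high_exc_condUB} (after removing the conditioning, using that the conditioning event has probability at most $1$, i.e.\ $P(E\cap F)\le P(E\given F)$). The remaining two terms are of the form $P(A^\lambda_{\cdot,\cdot})$, which is a \emph{stronger} event than $\bar A^\lambda$ over the same block since $A^\lambda_{l,m}\subset\bar A^\lambda_{l,m}$; but that alone does not close a recursion, so a genuine estimate is needed.

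To control $P(A^\lambda_{a,a+L})$ I would again use the kernel representation: conditioning on the boundary values $W_{a-1}=x$, $W_{a+L+1}=y$ (or using the unconditional endpoints via Lemma~\ref{lem:strd_time_kernel:UB}-type reasoning),
\[
    P\bigl(A^\lambda_{a,a+L}\given W_{a-1}=x,W_{a+L+1}=y\bigr)
    =
    \frac{1}{\kerPF^n_{a-1,a+L+1}(x,y)}\,
    E\Bigl(\mathds 1_{Z_{a+L+1}=y}\,\mathds 1_{\cap_{i=a}^{a+L}\{Z_i\ge \lambda n^{1/3}\}}\prod e^{-\alpha_i Z_i/n}\mathds 1_{Z_i\ge 0}\Bigr).
\]
In the numerator the constraint $Z_i\ge\lambda n^{1/3}$ on $L$ consecutive steps forces $\sum_{i=a}^{a+L}(\alpha_i/n)Z_i\ge \alpha_-\lambda L/n^{2/3}$, giving a factor $\exp(-\alpha_-\lambda L/n^{2/3})$ exactly as in~\eqref{eq:prf:lem:high_exc_condUB:num_UB}; what remains is a bridge/excursion probability bounded by $O(1)$ via~\cite[Lemma 6.2]{Ott+Velenik-2025a}. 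The denominator $\kerPF^n_{a-1,a+L+1}(x,y)$ is lower-bounded by the same tube-construction used in the proof of Lemma~\ref{lem:high_exc_condUB} (Figure~\ref{fig:lem:high_exc_condUB}): force the walk into a tube of height $O(n^{1/3})$, paying $\exp(-O(L/n^{2/3}))$ plus the polynomial prefactor $C n/L^3$ (times the $\min(x+1,\sqrt L)\min(y+1,\sqrt L)$ factors, which cancel against the numerator). Since for $L\ge r_0\sqrt\lambda n^{2/3}$ and $\lambda\ge\lambda_0$ the ``cost'' term $\exp(-\alpha_-\lambda L/n^{2/3})$ dominates $\exp(+O(L/n^{2/3}))$ and all polynomial factors, one gets $P(A^\lambda_{a,a+L})\le\exp(-c\lambda L/n^{2/3})$. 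Summing the three contributions and adjusting $c$ yields the claim. One also has to make sure the shifted blocks $[k-L,k-1]$ and $[k+L+1,k+2L]$ fit inside $\{0,\dots,n\}$; if $k$ or $k+L$ is within $O(L)$ of an endpoint, the convention $A^\lambda_{l,k}=A^\lambda_{l\vee 0,k\wedge n}$ together with the same excursion bounds handles the truncated blocks, possibly at the cost of shrinking $r_0$.

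The main obstacle is the denominator lower bound on $\kerPF^n_{a-1,a+L+1}(x,y)$ when $x$ or $y$ is itself close to $\lambda n^{1/3}$ — the tube construction must start and end below the boundary heights while threading through a region of height $\Theta(n^{1/3})$, and one needs the endpoint values to be compatible with the ``$x,y\le CL^\alpha$, $\alpha<2/3$'' regime required by the imported bridge estimates of~\cite{Ott+Velenik-2025a}. This is exactly the range of $\beta\in(0,1/6)$ computation performed at the start of the proof of Lemma~\ref{lem:high_exc_condUB}, so the same constraint propagates here; I would simply reuse it. Everything else is bookkeeping: splitting into the three events, invoking the already-proved conditional bound, and re-running the partition-function construction for a single block.
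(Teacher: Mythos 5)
Your opening union bound and the treatment of its first term via Lemma~\ref{lem:high_exc_condUB} are correct and match the start of the paper's proof, but the way you handle the two side terms $P(A^\lambda_{k-L,k-1})$ and $P(A^\lambda_{k+L+1,k+2L})$ does not go through. You propose to bound $P(A^\lambda_{a,a+L})$ directly by a kernel ratio after conditioning on $W_{a-1}=x$, $W_{a+L+1}=y$, but this bound cannot be made uniform in $x,y$. If $x$ or $y$ is of order $\Lambda n^{1/3}$ with $\Lambda\gg\lambda$, the numerator only yields the factor $\exp(-\alpha_-\lambda L/n^{2/3})$ from the constraint $Z_i\ge\lambda n^{1/3}$, while any lower bound on the partition function $\kerPF^n_{a-1,a+L+1}(x,y)$ in the denominator deteriorates with $\Lambda$: the walk must either descend from $\Lambda n^{1/3}$ to heights $O(n^{1/3})$, costing $\exp(-c\Lambda^2 n^{2/3}/L)$, or remain at height $\sim\Lambda n^{1/3}$, costing $\exp(-c\Lambda L/n^{2/3})$ through the potential. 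For $\Lambda$ large compared with $\lambda$, either cost swamps the numerator's gain and the ratio exceeds one. To restrict to $x,y\lesssim\lambda n^{1/3}$ you would need an a priori tail bound on $W_{a-1}$, which is exactly the kind of estimate the lemma is supposed to deliver; the argument is circular. (The ``$x,y\le CL^\alpha$'' constraint you flag is a separate, technical prerequisite of the imported bridge estimates; the essential obstruction is that the boundary heights are not controlled.)

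The paper sidesteps $P(A^\lambda)$ entirely. The key observation is the inclusion $\bar A^\lambda_{s,t}\cap A^\lambda_{2s-t,s-1}\subset\bar A^\lambda_{2s-t,t}$ (and symmetrically on the right): since $A^\lambda\subset\bar A^\lambda$ and a weighted average of two quantities each $\ge\lambda n^{1/3}$ is itself $\ge\lambda n^{1/3}$, the side terms in the union bound are dominated by $p(\cdot,\cdot)=P(\bar A^\lambda_{\cdot,\cdot})$ evaluated on an interval of \emph{twice} the length, not by $P(A^\lambda)$ on an interval of the \emph{same} length. This gives the recursion $p(s,t)\le q(s,t)+p(s,2t-s)+p(2s-t,t)$, where $q$ is the conditional probability controlled by Lemma~\ref{lem:high_exc_condUB}, with the interval doubling at each step. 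The recursion terminates once the interval covers $\{0,\dots,n\}$, where the conditioning events have probability one, and the resulting series $\sum_{i}2^i\exp(-c\lambda 2^i L/n^{2/3})$ is dominated by its first term once $\lambda_0$ is large. This dyadic doubling is the idea missing from your plan: iterating your union bound at fixed scale $L$ does not terminate, and the one-block estimate you propose as a substitute fails for the reason above.
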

\begin{proof}
	For \(s\leq t\), introduce
	\begin{equation*}
		p(s,t) = P\bigl(\bar{A}_{s,t}^{\lambda}\bigr),
		\quad
		q(s,t) = P\bigl( \bar{A}_{s,t}^{\lambda} \bgiven (A_{2s-t,s-1}^{\lambda})^c, (A_{t+1,2t -s}^{\lambda})^c\bigr).
	\end{equation*}
	We then have the following properties that are obtained by partitioning according to the realization of \(A_{2s-t,s-1}^{\lambda}\) and/or \(A_{t+1,2t -s}^{\lambda}\), and using inclusion of events and a union bound.
    \begin{itemize}
        \item If \(2t-s< n\), and \(2s-t> 0\),
    	\begin{equation*}
    		p(s,t)
    		\leq
    		q(s,t) + p(s, 2t-s) + p(2s-t, t).
    	\end{equation*} 
        \item If \(2t-s\geq n\), and \(2s-t> 0\), \((A_{t+1,2t -s}^{\lambda})^c\) has probability \(1\) and thus
    	\begin{equation*}
    		p(s,t)
    		\leq
    		q(s,t) + p(2s-t, t).
    	\end{equation*}
        \item If \(2t-s< n\), and \(2s-t\leq 0\), \((A_{2s-t,s-1}^{\lambda})^c\) has probability \(1\) and thus
    	\begin{equation*}
    		p(s,t)
    		\leq
    		q(s,t) + p(s, 2t-s).
    	\end{equation*}
        \item Finally, if \(2t-s\geq n\), and \(2s-t\leq 0\),
        \begin{equation*}
            p(s,t)
    		=
    		q(s,t).
        \end{equation*}
    \end{itemize} Applying this and Lemma~\ref{lem:high_exc_condUB}, we get by a direct induction that
	\begin{align*}
		p(k,k+L)
		&\leq
		\exp(-c\tfrac{\lambda L}{n^{2/3}}) + p(k, k+ 2L) + p(k-L, k+L)
		\\
		&\leq
		\sum_{i=0}^{\log_2(n)} 2^{i}\exp(-c\tfrac{\lambda 2^i L}{n^{2/3}})
		\\
		&\leq
		\exp(-c\tfrac{\lambda L}{n^{2/3}})\Bigl(1- 2\exp(-c\tfrac{\lambda L}{n^{2/3}})\Bigr)^{-1}
	\end{align*}
	for \(\lambda_0\) large enough (we used \(2^i \geq i+1\) for \(i\geq 0\)).
\end{proof}

\begin{lemma}
	\label{lem:dev_proba_LB}
	Let \(\beta\in (0,1/6)\). There are \(c,n_0,\lambda_0, r_0\geq 0\) such that for any \(n\geq n_0\), any \(\lambda_0\leq \lambda\leq n^{\beta}\), and any \(r_0 \sqrt{\lambda}n^{2/3}\leq k \leq n-r_0 \sqrt{\lambda} n^{2/3}\),
    \begin{equation*}
        P\bigl( W_{k}\geq \lambda n^{1/3}\bigr)
        \geq
        e^{-c\lambda^{3/2}}.
    \end{equation*}
    In particular, \(E(W_k) \geq Cn^{1/3}\) for some \(C>0\) uniform over \(n,k\) as above.
\end{lemma}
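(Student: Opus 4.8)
The plan is to express $P(W_k\ge\lambda n^{1/3})$ as $\kerPF_{0,n}^{n}(0,z_n)^{-1}$ times the same expectation with the extra indicator $\mathds{1}_{Z_k\ge\lambda n^{1/3}}$ inserted (with $z_n=\lceil h_n(n)\rceil-h_n(n)$), and to localise the effect of that constraint to a window of length of order $L:=r_0\sqrt\lambda n^{2/3}$ around $k$, using the stopping-time decomposition of the proof of Lemma~\ref{lem:high_exc_condUB}. Concretely, set $\tau_-=\min\{t\in\{k-2L,\dots,k-L\}:W_t<\lambda n^{1/3}\}$ and $\tau_+=\max\{t\in\{k+L,\dots,k+2L\}:W_t<\lambda n^{1/3}\}$, well defined on $B^-=(A^{\lambda}_{k-2L,k-L})^{\comp}$ and $B^+=(A^{\lambda}_{k+L,k+2L})^{\comp}$; for $r_0$ large these windows lie in $\{1,\dots,n\}$ under the hypothesis on $k$, and $\lambda\le n^{\beta}$ with $\beta<1/6$ keeps every endpoint below $CL^{\alpha}$ for a fixed $\alpha<2/3$, so that the bridge/excursion estimates of~\cite{Ott+Velenik-2025a} apply, exactly as at the start of the proof of Lemma~\ref{lem:high_exc_condUB}. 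Since $\{\tau_-=s,W_{\tau_-}=x\}$ is measurable w.r.t.\ $(W_i)_{i\le s}$, $\{\tau_+=t,W_{\tau_+}=y\}$ w.r.t.\ $(W_i)_{i\ge t}$, and $s<k<t$, decomposing $P(W_k\ge\lambda n^{1/3})\ge P(W_k\ge\lambda n^{1/3},B^-,B^+)$ over these values and using the Markov property of $W$ gives
\[
    P\bigl(W_k\ge\lambda n^{1/3}\bigr)\ \ge\ \Bigl(\inf_{s,t,x,y}\frac{\widetilde\kerPF_{s,t}(x,y)}{\kerPF_{s,t}^{n}(x,y)}\Bigr)\,P\bigl(B^-\cap B^+\bigr),
\]
the infimum being over $s\in\{k-2L,\dots,k-L\}$, $t\in\{k+L,\dots,k+2L\}$ and admissible $0\le x,y<\lambda n^{1/3}$, and $\widetilde\kerPF_{s,t}(x,y)=E\bigl(\mathds{1}_{Z_t=y}\mathds{1}_{Z_k\ge\lambda n^{1/3}}\prod_{i=s+1}^t\mathds{1}_{Z_i\ge0}e^{-\frac{\alpha_i}{n}Z_i}\bgiven Z_s=x\bigr)$.

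For the probability factor, $A^{\lambda}_{m,m+L}\subset\bar A^{\lambda}_{m,m+L}$ together with Lemma~\ref{lem:high_exc_UB} gives $P((B^{\pm})^{\comp})\le e^{-cr_0\lambda^{3/2}}$, so $P(B^-\cap B^+)\ge 1-2e^{-cr_0\lambda^{3/2}}\ge\tfrac12$ once $\lambda_0$ is large. For the ratio, I would bound the denominator from above by dropping the area weight and using \cite[Lemma 6.2]{Ott+Velenik-2025a}: $\kerPF_{s,t}^{n}(x,y)\le C\min(x{+}1,\sqrt L)\min(y{+}1,\sqrt L)L^{-3/2}$ (as $t-s\asymp L$). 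For the numerator I would redo the nested-tube construction that produced the lower bound on $\kerPF_{s,t}^{n}$ in the proof of Lemma~\ref{lem:high_exc_condUB} (width $2\lambda n^{1/3}$ near the endpoints, narrow openings at height $\asymp n^{1/3}$, width $\asymp n^{1/3}$ in between), with the sole modification that on a sub-window of length $\asymp L$ about $k$ the path is forced to reach $[\lambda n^{1/3},2\lambda n^{1/3}]$ at time $k$ and to stay below $2\lambda n^{1/3}$. The extra entropic cost of such a peak of height $\lambda n^{1/3}$ over $\Theta(\sqrt\lambda n^{2/3})$ steps is $e^{-\Theta(\lambda^{3/2})}$ and its extra area cost is at most $\tfrac{\alpha_+}{n}\,2\lambda n^{1/3}\,\Theta(L)=\Theta(\lambda^{3/2})$, so \cite[Theorems 5.5 and 7.3]{Ott+Velenik-2025a}, applied as in that proof, yield $\widetilde\kerPF_{s,t}(x,y)\ge C\min(x{+}1,\sqrt L)\min(y{+}1,\sqrt L)\,nL^{-3}e^{-C\lambda^{3/2}}$. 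The $\min$-factors cancel in the ratio, which is then $\ge c\,nL^{-3/2}e^{-C\lambda^{3/2}}=c\,r_0^{-3/2}\lambda^{-3/4}e^{-C\lambda^{3/2}}\ge e^{-c'\lambda^{3/2}}$ for $\lambda\ge\lambda_0$ (the polynomial $\lambda^{-3/4}$ absorbed into the exponential). Combining the two estimates gives $P(W_k\ge\lambda n^{1/3})\ge\tfrac12e^{-c'\lambda^{3/2}}\ge e^{-c\lambda^{3/2}}$; and $E(W_k)\ge\lambda_0n^{1/3}P(W_k\ge\lambda_0n^{1/3})\ge Cn^{1/3}$ follows by taking $\lambda=\lambda_0$.

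I expect the main obstacle to be the lower bound on $\widetilde\kerPF_{s,t}(x,y)$: one must reproduce the delicate concatenation of bridge and excursion estimates from the proof of Lemma~\ref{lem:high_exc_condUB} with a superimposed peak of height $\lambda n^{1/3}$, and — this is the delicate point — track every polynomial-in-$(n,L,\lambda)$ prefactor accurately enough that it coincides with the corresponding factor in the \cite[Lemma 6.2]{Ott+Velenik-2025a} upper bound on $\kerPF_{s,t}^{n}$, so that they cancel in the ratio and only a clean $e^{-\Theta(\lambda^{3/2})}$ survives. The remaining points — the applicability of the~\cite{Ott+Velenik-2025a} estimates when $\lambda\le n^{\beta}$, $\beta<1/6$, and the choice of $r_0$ ensuring that $\{k-2L,\dots,k-L\}$ and $\{k+L,\dots,k+2L\}$ lie in $\{1,\dots,n\}$ and that Lemma~\ref{lem:high_exc_UB} applies to them — are routine bookkeeping with constants.
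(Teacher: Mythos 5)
Your strategy — localize via stopping times in windows of length $\asymp\sqrt\lambda n^{2/3}$ on either side of $k$, decompose by Markov, write the conditional probability as a ratio of weighted kernels, upper-bound the denominator by dropping the area factor and invoking \cite[Lemma 6.2]{Ott+Velenik-2025a}, lower-bound the numerator by a tube construction with a forced peak at $k$ — is exactly the paper's strategy for this lemma, and your ratio estimate $nL^{-3/2}e^{-C\lambda^{3/2}}\ge e^{-c'\lambda^{3/2}}$ is the same computation that concludes the paper's proof. The one place where you diverge, and where your version is harder than it needs to be, is the choice of truncation height for the stopping times: you cut at $\lambda n^{1/3}$, so the boundary values $x,y$ can be as large as $\lambda n^{1/3}$, and you then compensate with a full nested-tube construction (wide, narrow opening at height $\asymp n^{1/3}$, narrow middle, peak) borrowed from the proof of Lemma~\ref{lem:high_exc_condUB}. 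The paper instead truncates at a \emph{fixed} height $Rn^{1/3}$ and uses the two-sided windows $\{k-L-L',\ldots,k-L\}$, $\{k+L,\ldots,k+L+L'\}$ with $L=\lfloor\sqrt\lambda n^{2/3}\rfloor$, $L'=\lfloor K\sqrt\lambda n^{2/3}\rfloor$. This is what makes the argument clean: once $x,y\le Rn^{1/3}\ll\sqrt L=\lambda^{1/4}n^{1/3}$ (for $\lambda\ge\lambda_0(R)$), the factors $\min(x+1,\sqrt L)$, $\min(y+1,\sqrt L)$ reduce to $x+1$, $y+1$ in both the \cite[Lemma~6.2]{Ott+Velenik-2025a} upper bound on $\kerPF_{s,t}^n$ and the \cite[Theorem~7.3]{Ott+Velenik-2025a} lower bound on the numerator, so the cancellation you flag as ``the main obstacle'' happens automatically. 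Moreover, with $x,y=O(n^{1/3})$, the numerator lower bound needs only a \emph{single} tube of width $3\lambda n^{1/3}$ with $Z_k\in[\lambda n^{1/3},2\lambda n^{1/3}]$: the Gaussian factor $e^{-c(x-z)^2/L}$ with $z\asymp\lambda n^{1/3}$ and $L\asymp\sqrt\lambda n^{2/3}$ gives precisely $e^{-c\lambda^{3/2}}$, and the area cost over $t-s\asymp K\sqrt\lambda n^{2/3}$ steps is $\Theta(K\lambda^{3/2})$; there is no need for an intermediate narrow passage at height $n^{1/3}$. In short: your plan is sound and would likely go through with substantially more bookkeeping, but you are solving a strictly harder version of the estimate than necessary because of the $\lambda n^{1/3}$ cutoff; adopting the fixed-$R$ cutoff as the paper does removes the delicate prefactor-tracking entirely.
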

\begin{proof}
	Take \(K,R\) large enough and let \(L= \lfloor \sqrt{\lambda} n^{2/3}\rfloor\), \(L'= \lfloor K \sqrt{\lambda} n^{2/3}\rfloor\). By Lemma~\ref{lem:high_exc_UB} (for \(K,R\) large enough),
    \begin{equation*}
        P\bigl((A_{k-L-L',k-L}^{R})^c\cap (A_{k+L,k+L+L'}^{R})^c\bigr)
		\geq
		\tfrac{1}{2}.
    \end{equation*}
    In particular, under the event \(B=(A_{k-L-L',k-L}^{R})^c\cap (A_{k+L,k+L+L'}^{R})^c\), there are \(s\in \{k-L-L',\dots, k-L\}\) and \(t\in \{k+L,\dots, k+L+L'\}\) such that \(W_{s},W_{t} \leq Rn^{1/3}\). Summing over the leftmost such \(s\), denoted \(N_-\), and the rightmost such \(t\), denoted \(N_+\), and using Markov's property, we get
    \begin{multline*}
        P\bigl( W_{k}\geq \lambda n^{1/3}\bigr)\\
        \geq
        \sum_{s= k-L-L'}^{k-L} \sum_{t= k+L}^{k+L+L'} \sum_{0\leq x \leq Rn^{1/3}} \sum_{0\leq y \leq Rn^{1/3}} P\bigl( W_{k}\geq \lambda n^{1/3} \bgiven W_{s}=x, W_{t} = y\bigr)
        \\
        \cdot P\bigl(N_- = s, N_+= t, W_s= x, W_{t} = y\bigr).
    \end{multline*}
    Now, for \(s,t,x,y\) as in the above display,
    \begin{multline*}
        P\bigl( W_{k}\geq \lambda n^{1/3} \bgiven W_{s}=x, W_{t} = y\bigr)
        \\=
        \frac{1}{\kerPF_{s,t}^n(x,y)} E\Bigl( \mathds{1}_{Z_t=y} \mathds{1}_{ Z_k \geq \lambda n^{1/3}} \prod_{i= s+1}^t e^{-\frac{\alpha_i}{n} Z_i} \mathds{1}_{Z_i \geq 0} \Bgiven Z_{s} = x\Bigr).
    \end{multline*}
    On the one hand, by~\cite[Lemma 6.2]{Ott+Velenik-2025a},
    \begin{equation*}
        \kerPF_{s,t}^n(x,y)
        \leq
        P\bigl(Z_t= y, \min_{i=s+1,\dots,t} Z_i\geq 0 \bgiven Z_s = x\bigr)
        \leq
        \tfrac{C\min(y+1,\sqrt{L})\min(x+1,\sqrt{L})}{L^{3/2}}.
    \end{equation*}
    On the other hand,
    \begin{multline*}
        E\Bigl( \mathds{1}_{Z_t=y} \mathds{1}_{Z_k \geq \lambda n^{1/3}} \prod_{i= s+1}^t e^{-\frac{\alpha_i}{n} Z_i} \mathds{1}_{Z_i \geq 0} \Bgiven Z_{s} = x\Bigr)
        \\
        \geq
        e^{-\frac{3\alpha_+\lambda}{n^{2/3}} (t-s)} P\bigl( Z_t=y, \tfrac{Z_k}{\lambda n^{1/3}} \in [1,2],\, \cap_{i= s+1}^{t}  \{0\leq Z_i \leq 3\lambda n^{1/3}\} \bgiven Z_{s} = x\bigr).
    \end{multline*}
    Now, by~\cite[Theorem 7.3]{Ott+Velenik-2025a}, as \(L \leq k-s, t-k \leq L+L'\), and \(x,y\leq Rn^{1/3}\), taking \(\lambda\) large enough,
    \begin{align*}
        P\bigl( Z_t=y,& \tfrac{Z_k}{\lambda n^{1/3}} \in [1,2],\, \cap_{i= s+1}^{t}  \{0\leq Z_i \leq 3\lambda n^{1/3}\} \bgiven Z_{s} = x\bigr)
        \\
        &=
        \sum_{\lambda n^{1/3}\leq z\leq 2\lambda n^{1/3}}P\bigl( Z_k=z,\, \cap_{i= s+1}^{k}  \{0\leq Z_i \leq 3\lambda n^{1/3}\} \bgiven Z_{s} = x\bigr)
        \\[-3mm]
        &\hspace*{4.2cm}\qquad\cdot P\bigl( Z_t=y,\, \cap_{i= k+1}^{t}  \{0\leq Z_i \leq 3\lambda n^{1/3}\} \bgiven Z_k=z \bigr)
        \\
        &\geq
        \sum_{\lambda n^{1/3}\leq z\leq 2\lambda n^{1/3}} C\tfrac{\min(x+1,\sqrt{L})\sqrt{L}}{(L+L')^{3/2}}  \cdot \tfrac{\min(y+1,\sqrt{L}) \sqrt{L}}{ (L+L')^{3/2}}\exp(-c\tfrac{(x-z)^2 + (y-z)^2}{L})
        \\
        &\geq
        C \tfrac{\min(x+1,\sqrt{L})\min(y+1,\sqrt{L})}{K^3n} \exp(-c\lambda^{3/2}).
    \end{align*}
    Combining the four last displays, we obtain
    \begin{equation*}
        P\bigl( W_{k}\geq \lambda n^{1/3} \bgiven W_{s}=x, W_{t} = y\bigr)
        \\\geq
        \frac{C}{K^3} \lambda^{3/4} \exp(-cK\lambda^{3/2}),
    \end{equation*}
    which implies the claim.
\end{proof}

\begin{lemma}
	\label{lem:dev_proba_UB}
    Let \(\beta\in (0,1/6)\). There are \(c,n_0,\lambda_0, r_0\geq 0\) such that for any \(n\geq n_0\), any \(\lambda\geq \lambda_0\), and any \(r_0 \sqrt{\lambda}n^{2/3}\leq k \leq n-r_0 \sqrt{\lambda} n^{2/3}\),
    \begin{equation*}
        P\bigl( W_{k}\geq \lambda n^{1/3}\bigr)
        \leq
        \begin{cases}
            e^{-c\lambda^{3/2}} & \text{ if } \lambda \leq n^{\beta},
            \\
            e^{-c\lambda } & \text{ if } \lambda > n^{\beta}.
        \end{cases}
    \end{equation*}
     In particular, for any \(p\geq 0\), there is \(C_p\in (0,+\infty)\) uniform over \(n,k\) as above such that \(E(W_k^p) \leq C_pn^{p/3}\).
\end{lemma}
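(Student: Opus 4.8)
The moment and variance bounds follow from the tail bounds by the same integration carried out at the end of the proof of Theorem~\ref{thm:unif_curv_typ_height_fluctuations}, so the plan is to concentrate on the tails. Fix the window scale $L=\lfloor\tfrac13 r_0\sqrt\lambda\,n^{2/3}\rfloor$; the hypotheses $r_0\sqrt\lambda n^{2/3}\le k\le n-r_0\sqrt\lambda n^{2/3}$ then leave room for a few consecutive windows of length $\asymp L$ on either side of $k$. Consider first the range $\lambda_0\le\lambda\le n^\beta$. Let $\tau$ be the first time $>k$ with $W_\tau<\tfrac12\lambda n^{1/3}$ (finite, since $W_n=z_n=O(1)$). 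On $\{W_k\ge\lambda n^{1/3}\}$ one has either $\tau>k+L$ — in which case $W_i\ge\tfrac12\lambda n^{1/3}$ throughout $[k,k+L]$, so $A^{\lambda/2}_{k,k+L}\subseteq\bar A^{\lambda/2}_{k,k+L}$ occurs and Lemma~\ref{lem:high_exc_UB} (with $\lambda/2$ in place of $\lambda$) bounds its probability by $e^{-c\lambda^{3/2}}$ — or $\tau\le k+L$, i.e.\ the ``descent event'' $D$, on which $W$ drops by $\ge\tfrac12\lambda n^{1/3}$ within $L$ steps.

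Bounding $D$ is the technical core. Introduce an intermediate level $M_\lambda n^{1/3}$ with $M_\lambda\asymp\lambda^{2/3}$ and let $N_-$ (resp.\ $N_+$) be the last time $\le k$ (resp.\ first time $\ge k$) at which $W<M_\lambda n^{1/3}$. The exponent $2/3$ is forced by two competing requirements: the event that $W$ stays above $M_\lambda n^{1/3}$ for $\asymp L$ consecutive steps must cost at most $e^{-c\lambda^{3/2}}$, and by Lemma~\ref{lem:high_exc_UB} at level $M_\lambda$ this demands $M_\lambda^{3/2}\sqrt\lambda\gtrsim\lambda^{3/2}$; on the other hand $M_\lambda n^{1/3}$ must remain below $n^\beta n^{1/3}$ and below $\asymp L^\alpha$ for some $\alpha<2/3$, so that Lemma~\ref{lem:high_exc_UB} and the bridge/excursion estimates of~\cite{Ott+Velenik-2025a} are available on length-$\asymp L$ intervals; one checks that $M_\lambda\asymp\lambda^{2/3}$ with $\lambda\le n^\beta$, $\beta<1/6$, is compatible with both. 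Then $P(D)\le P(A^{M_\lambda}_{k-2L,k-L})+P(A^{M_\lambda}_{k+L,k+2L})+\sum P(N_-=s,N_+=t,W_s=x,W_t=y)\,P(D\mid W_s=x,W_t=y)$, the first two terms being $\le e^{-c\lambda^{3/2}}$ by Lemma~\ref{lem:high_exc_UB}. The Markov property cancels the contributions outside $[s,t]$ in $P(D\mid W_s=x,W_t=y)$, leaving the ratio of the area-tilted kernel from $x$ to $y$ over $[s,t]$ \emph{conditioned to reach height $\ge\lambda n^{1/3}$} to $\kerPF^n_{s,t}(x,y)$ — precisely the object handled in the proof of Lemma~\ref{lem:high_exc_condUB}. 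Dropping the $\le 1$ positivity and tilt weights, the numerator is bounded via the dichotomy ``the excursion stays above $\tfrac12\lambda n^{1/3}$ for $\gtrsim\sqrt\lambda n^{2/3}$ steps'' (area $\gtrsim\lambda^{3/2}n$, tilt $\le e^{-c\lambda^{3/2}}$) versus ``it has amplitude $\asymp\lambda n^{1/3}$ over $\lesssim\sqrt\lambda n^{2/3}$ steps'' (probability $\le e^{-c(\lambda n^{1/3})^2/(\sqrt\lambda n^{2/3})}=e^{-c\lambda^{3/2}}$, by Doob's inequality or the bridge bounds of~\cite{Ott+Velenik-2025a}); the denominator is bounded below by the explicit confined-path construction of Lemmas~\ref{lem:high_exc_condUB}–\ref{lem:dev_proba_LB} (keeping $Z$ inside $[0,3M_\lambda n^{1/3}]$), whose area cost is only $e^{-O(\lambda^{7/6})}=e^{o(\lambda^{3/2})}$. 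The $\lambda$-polynomial prefactors in numerator and denominator match up to constants, so the ratio is $\le e^{-c\lambda^{3/2}}$ once $\lambda_0$ is large enough, and summing over $s,t,x,y$ only introduces the probability weight $\le 1$. Combined with the first paragraph this gives $P(W_k\ge\lambda n^{1/3})\le e^{-c\lambda^{3/2}}$ for $\lambda_0\le\lambda\le n^\beta$.

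For $\lambda>n^\beta$, Lemma~\ref{lem:high_exc_UB} is no longer available at height $\lambda n^{1/3}$ and one settles for $e^{-c\lambda}$. When $\lambda\ge C_0 n^{1/3}$ ($C_0$ large), a union bound and an exponential maximal inequality for the increments (which have uniform exponential moments) give the numerator $E(\mathds{1}_{\exists i:\,Z_i\ge\lambda n^{1/3}}\mathds{1}_{Z_n=z_n}\mid Z_0=0)\le e^{-c\min(\lambda^2/n^{1/3},\,\lambda n^{1/3})}$; this minimum is both $\ge C_0 n^{1/3}$ and $\ge\lambda$, so dividing by $\kerPF^n_{0,n}(0,z_n)\ge e^{-Cn^{1/3}}$ (Lemma~\ref{lem:thm_unif_curv_free_energy:LB}) still leaves $\le e^{-c\lambda}$. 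In the remaining range $n^\beta<\lambda<C_0 n^{1/3}$ one bootstraps: localising as in the previous paragraph but at the highest admissible level $\asymp n^\beta n^{1/3}$ yields $e^{-c\lambda}$ for $\lambda$ up to $\asymp n^{3\beta}$, re-localising at that level extends it to $\asymp n^{9\beta}$, and so on, so finitely many iterations reach $C_0 n^{1/3}$ (and already for $\lambda\gtrsim n^{2/9}$, when the extra area $\Theta(\lambda^{3/2}n)$ dominates the $\Theta(n)$ baseline, the area-vs-free-energy comparison above applies directly). Finally, $E(W_k^p)=\int_0^\infty P(W_k^p\ge t)\,dt$; the substitution $t=(\lambda n^{1/3})^p$ and a split at $\lambda=n^\beta$ bound it by $\lambda_0^p n^{p/3}+Cn^{p/3}\int_{\lambda_0}^\infty\lambda^{p-1}e^{-c\lambda^{3/2}}\,d\lambda+Cn^{p/3}\int_{n^\beta}^\infty\lambda^{p-1}e^{-c\lambda}\,d\lambda=O(n^{p/3})$.

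The main obstacle is the range $\lambda>n^\beta$: neither the global free-energy lower bound $e^{-\Theta(n^{1/3})}$ nor the estimates imported from~\cite{Ott+Velenik-2025a} are strong enough at height $\lambda n^{1/3}$ directly, and reconciling them forces the bootstrap over height scales. Within the descent bound, the delicate bookkeeping is the joint choice of the intermediate level $M_\lambda$ and of the window length, which must keep every imported estimate inside its domain of validity — this is exactly where the hypothesis $\beta<1/6$ is consumed.
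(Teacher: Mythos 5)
The overall architecture of your argument (barrier event on flanking windows, Markov decomposition at the hitting times $N_\pm$, ratio of the area-tilted kernel to the partition function $\kerPF_{s,t}^n$) is the same as the paper's, but your choice of the intermediate level $M_\lambda\asymp\lambda^{2/3}$ does not deliver the claimed exponent, and the motivation for that choice is a dead end.

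Concretely, after decomposing
\[
P(D)\le P\bigl(A^{M_\lambda}_{k-2L,k-L}\bigr)+P\bigl(A^{M_\lambda}_{k+L,k+2L}\bigr)+\sum_{s,t,x,y}P(\cdots)\,P\bigl(D\bgiven W_s=x,W_t=y\bigr),
\]
the first two terms are controlled by Lemma~\ref{lem:high_exc_UB} applied at level $M_\lambda$ over a window of length $L\asymp\sqrt\lambda\,n^{2/3}$, giving $e^{-cM_\lambda L/n^{2/3}}=e^{-cM_\lambda\sqrt\lambda}$. With $M_\lambda\asymp\lambda^{2/3}$ this is only $e^{-c\lambda^{7/6}}$, not $e^{-c\lambda^{3/2}}$. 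The constraint you wrote, $M_\lambda^{3/2}\sqrt\lambda\gtrsim\lambda^{3/2}$, carries a spurious factor $M_\lambda^{1/2}$; the correct requirement is $M_\lambda\sqrt\lambda\gtrsim\lambda^{3/2}$, which forces $M_\lambda\gtrsim\lambda$. This is not a cosmetic slip: you chose $\lambda^{2/3}$ precisely so that the denominator's area-tilt cost (for a path confined to $[0,3M_\lambda n^{1/3}]$ over a window of length $\asymp L$) would be $e^{-O(M_\lambda\sqrt\lambda)}=e^{-O(\lambda^{7/6})}=e^{o(\lambda^{3/2})}$, letting you avoid any tuning of constants. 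But the flanking-window cost and the denominator tilt cost are \emph{the same quantity} up to constants — both are $\exp(-\Theta(M_\lambda L/n^{2/3}))$ — so making one $o(\lambda^{3/2})$ automatically makes the other too weak. There is no choice of $M_\lambda$ that sidesteps this, and for $\beta$ close to $1/6$ one cannot trade it off by enlarging $L$ either (the constraint $M_\lambda n^{1/3}\lesssim L^\alpha$, $\alpha<2/3$, pins down $M_\lambda\asymp\lambda$, $L\asymp\sqrt\lambda n^{2/3}$). The paper lives with both exponents being $\Theta(\lambda^{3/2})$: it introduces the two parameters $R$ (level $\lambda/R$) and $K$ (window length $K\sqrt\lambda n^{2/3}$), getting numerator $\le e^{-c\lambda^{3/2}/K}$ and denominator $\ge e^{-c'K\lambda^{3/2}/R}$, and then fixing $R\asymp K^2$ with $K$ large so that $1/K > c'K/R$. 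That two-parameter balance is the part your proposal explicitly tried to eliminate, and is in fact unavoidable.

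For $\lambda>n^\beta$ the multi-stage bootstrap over height scales is plausible in outline, but at each stage you again face the same matching problem (the level you localise at is a function of $n$, the window a function of $\lambda$, and the two tilt costs must be balanced against the numerator's $e^{-c\lambda^2n^{2/3}/L}$), so it is not actually simpler. The paper instead takes a constant level $R$ (independent of $\lambda$ and $n$) and a window $L\asymp\epsilon\lambda n^{2/3}$: the flanking and denominator costs are then $e^{-\Theta(R\epsilon\lambda)}$, the numerator is $e^{-c\lambda/\epsilon}$, and one simply picks $R$ large, then $\epsilon$ small as a function of $R$. This gives $e^{-c\lambda}$ in a single step with no bootstrap; it also uses only the crude large-deviation bound~\cite[Lemma~2.1]{Ott+Velenik-2025a} rather than the excursion estimates, which indeed are out of range for $\lambda>n^\beta$ — a point your ``area-vs-free-energy comparison applies directly'' for $\lambda\gtrsim n^{2/9}$ glosses over.
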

\begin{figure}
	\centering
	\includegraphics{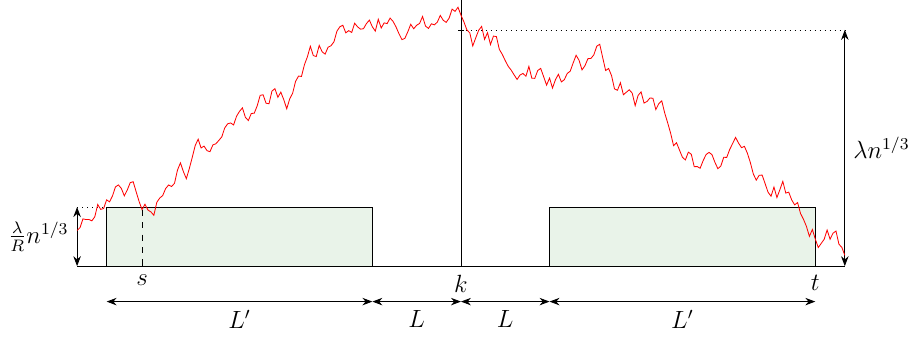}
	\caption{The construction in the proof of Lemma~\ref{lem:dev_proba_UB}. When the event \(B\) occurs, the trajectory visits both shaded rectangles. The leftmost and rightmost points at which this occurs are denoted \(s\) and \(t\) respectively.}
	\label{fig:lem:dev_proba_UB}
\end{figure}
\begin{proof}
	\textbf{We first consider \(\boldsymbol{\lambda\leq n^{\beta}}\).} Let \(K,R>0\) be large enough. Let \(L = \lceil \sqrt{\lambda} n^{2/3}\rceil\), and \(L' = \lceil K \sqrt{\lambda} n^{2/3}\rceil\). Let
    \begin{equation*}
        B = (A_{k-L-L',k-L}^{\lambda/R})^c \cap (A_{k+L,k+L+L'}^{\lambda/R})^c
    \end{equation*}
    Then,
    \begin{equation*}
        P\bigl( W_{k}\geq \lambda n^{1/3}\bigr)
        \leq
        P\bigl( W_{k}\geq \lambda n^{1/3}, B\bigr) + P(A_{k-L-L',k-L}^{\lambda/R}) + P(A_{k+L,k+L+L'}^{\lambda/R}).
    \end{equation*}
    Now, on the one hand, by Lemma~\ref{lem:high_exc_UB},
    \begin{equation*}
        P(A_{k-L-L',k-L}^{\lambda/R}) + P(A_{k+L,k+L+L'}^{\lambda/R})
        \leq
        2\exp(-c\tfrac{\lambda L'}{Rn^{2/3}})
        \leq
        2\exp(-c \tfrac{K}{R} \lambda^{3/2} ).
    \end{equation*}
    On the other hand, under the event \(B\), there are \(s\in \{k-L-L',\dots, k-L\}\) and \(t\in \{k+L,\dots, k+L+L'\}\) such that \(W_s,W_t\leq \lambda  n^{1/3}/R\). Letting \(N_-\) denote the leftmost such \(s\), and \(N_+\) the rightmost such \(t\), we have (see Figure~\ref{fig:lem:dev_proba_UB})
    \begin{multline}
    \label{eq:prf:lem:dev_proba_UB_excursion:constr_dev_Markov_decomp}
        P\bigl( W_{k}\geq \lambda n^{1/3}, B\bigr)
        =
        \sum_{s =k-L-L'}^{k-L} \sum_{t=k+L}^{k+L+L'} \sum_{0\leq x,y\leq \frac{\lambda  n^{1/3}}{R}}P\bigl( W_{k}\geq \lambda n^{1/3} \bgiven W_s =x, W_t = y\bigr)
        \\
        \cdot P\bigl( N_- = s, N_+ = t, W_s = x, W_t = y\bigr).
    \end{multline}
    Now, for \(s,t,x,y\) as in the above display,
    \begin{multline*}
        P\bigl( W_{k}\geq \lambda n^{1/3} \bgiven W_{s}=x, W_{t} = y\bigr)
        \\=
        \frac{1}{\kerPF_{s,t}^n(x,y)} E\Bigl( \mathds{1}_{Z_t=y} \mathds{1}_{ Z_k \geq \lambda n^{1/3}} \prod_{i= s+1}^t e^{-\frac{\alpha_i}{n} Z_i} \mathds{1}_{Z_i \geq 0} \Bgiven Z_{s} = x\Bigr).
    \end{multline*}
    We first lower bound the partition function:
    \begin{align*}
        \kerPF_{s,t}^n(x,y)
        &\geq
        e^{-\frac{2\alpha_+\lambda}{Rn}n^{1/3}(s-t)} P\bigl(Z_t= y,\, \cap_{i=s+1}^t \{0\leq Z_i\leq \tfrac{2\lambda n^{1/3}}{R}\}  \bgiven Z_s = x\bigr)
        \\
        &\geq
        e^{-\frac{6\alpha_+K \lambda^{3/2}}{R}} \frac{C\min(x+1,\sqrt{L})\min(y+1,\sqrt{L})}{(L+L')^{3/2}} e^{-c\tfrac{(x-y)^2}{L}}
        \\
        &\geq
        \frac{C\min(x+1,\sqrt{L})\min(y+1,\sqrt{L})}{K^{3/2}L^{3/2}} \exp(-\tfrac{6\alpha_+K \lambda^{3/2}}{R} - c\tfrac{\lambda^{3/2}}{R^2})
        \\
        &\geq
        \frac{C\min(x+1,\sqrt{L})\min(y+1,\sqrt{L})}{K^{3/2}L^{3/2}} \exp(-c \tfrac{K}{R} \lambda^{3/2}),
    \end{align*}
    where we used~\cite[Theorem 7.3]{Ott+Velenik-2025a}, the constants \(C\) and \(c\) do not depend on \(K,R\), and we used \(\lambda\geq \lambda_0\) large enough as a function of \(K,R\). Then, we upper bound the expectation term:
    \begin{align*}
        E\Bigl( \mathds{1}_{Z_t=y} \mathds{1}_{ Z_k \geq \lambda n^{1/3}} &\prod_{i= s+1}^t e^{-\frac{\alpha_i}{n} Z_i} \mathds{1}_{Z_i \geq 0} \Bgiven Z_{s} = x\Bigr)
        \\
        &\leq
        P\bigl( Z_t=y, Z_k \geq \lambda n^{1/3}, \min_{i= s,\dots, t} Z_i \geq 0\bgiven Z_{s} = x \bigr)
        \\
        &\leq
        \frac{C\sqrt{L+L'} \min(x+1, \sqrt{KL})\min(y+1, \sqrt{KL})}{\lambda n^{1/3}L^{3/2}} \exp(-c \tfrac{\lambda^2 n^{2/3}}{t-s})
        \\
        &\leq
        \frac{CK^{3/2} \min(x+1, \sqrt{L})\min(y+1, \sqrt{L})}{\lambda^{3/4}L^{3/2}} \exp(-c\lambda^{3/2}/K)
    \end{align*}
    by~\cite[Lemma 7.4]{Ott+Velenik-2025a}, where \(C\) and \(c>0\) again do not depend on \(K,R\).
    Combining the three last displays and using \(\lambda\) larger than \(\lambda_0\) large enough, we obtain
    \begin{equation*}
        P\bigl( W_{k}\geq \lambda n^{1/3} \bgiven W_{s}=x, W_{t} = y\bigr)
        \\\leq
        \tfrac{CK^3}{\lambda^{3/4}} \exp(-c(\tfrac 1K-\tfrac{c'K}{R})\lambda^{3/2})
    \end{equation*}
    where \(C\in (0,+\infty)\) and \(c,c'>0\) do not depend on \(K,R\). Plugging this into~\eqref{eq:prf:lem:dev_proba_UB_excursion:constr_dev_Markov_decomp}, we obtain the claim for any \(n\geq n_0\) and any \(\lambda\leq n^{\beta}\), once we set \(R=\tfrac12c'K^2\) and then choose \(K\) large enough.

    \medskip
    \textbf{We now derive an upper bound valid for any \(\lambda > n^{\beta}\).}
    Let \(R>0\) large and \(\epsilon>0\) small, to be fixed later. Set \(L = \lceil \epsilon\lambda n^{2/3}\rceil\). We decompose the event similarly:
    \begin{equation*}
        B = (A_{k-2L,k-L}^{R})^c\cap (A_{k+L,k+2L}^{R})^c
    \end{equation*}
    Then, by a union bound and Lemma~\ref{lem:high_exc_UB},
    \begin{equation*}
        P(W_k\geq \lambda n^{1/3})
        \leq
        2 \exp(-cR\epsilon\lambda) + P(W_k\geq \lambda n^{1/3}, B).
    \end{equation*}
    Now, letting \(N_- = \min\{k-2L \leq s \leq k-L \,:\, W_{s}\leq R n^{1/3}\}\) and \(N_+ = \max\{k+L \leq t \leq k+2L \,:\, W_t\leq R n^{1/3}\}\), we have
    \begin{multline*}
        P(W_k\geq \lambda n^{1/3}, B)
        =
        \sum_{s=k-2L}^{k-L}\sum_{t=k+L}^{k+2L} \sum_{0\leq x,y \leq R n^{1/3} } P(W_{k} \geq \lambda n^{1/3} \given W_s = x, W_t = y) 
        \\
        \cdot P(N_- = s, W_s = x, N_+ = t, W_t = y).
    \end{multline*}
    Now, we use the same expression for \(P(W_{k} \geq \lambda n^{1/3} \given W_s = x, W_t = y)\) as in the previous point. We first lower bound
    \begin{align*}
        \kerPF_{s,t}^n(x,y)
        &\geq
        e^{-\alpha_+2Rn^{1/3}L} P\bigl(Z_t= y,\, \cap_{i=s+1}^t \{0\leq Z_i\leq 2R n^{1/3}\}  \bgiven Z_s = x\bigr)
        \\
        &\geq
        e^{-2\alpha_+R\epsilon\lambda} \frac{C(x+1)(y+1)}{R^3n} e^{-c L/ (Rn^{1/3})^2}
        \\
        &\geq
        \frac{C(x+1)(y+1)}{R^3n} e^{-c \epsilon R^{-2} \lambda -2\alpha_+R\epsilon\lambda}
        \\
        &\geq
        e^{-cR\epsilon\lambda},
    \end{align*}
    by~\cite[Lemma~7.1]{Ott+Velenik-2025a}, where the constants \(C\) and \(c\) do not depend on \(R,\epsilon\), and we used \(R\) large enough in the last line together with \(\lambda > n^{\beta}\). Then, as \(x\leq R n^{1/3}\),
    \begin{align*}
        E\Bigl( \mathds{1}_{Z_t=y} \mathds{1}_{ Z_k \geq \lambda n^{1/3}} \prod_{i= s+1}^t e^{-\frac{\alpha_i}{n} Z_i} \mathds{1}_{Z_i \geq 0} \Bgiven Z_{s} = x\Bigr)
        &\leq
        P\bigl( Z_k \geq \lambda n^{1/3} \bgiven Z_{s} = x \bigr) \\
        &\leq
        P\bigl( Z_k \geq \tfrac12 \lambda n^{1/3} \bgiven Z_{s} = 0 \bigr) \\
        &\leq
        e^{-c \lambda^2n^{2/3}/L}
        \\
        &\leq 
        e^{-c \lambda/\epsilon},
    \end{align*}
    by~\cite[Lemma 2.1]{Ott+Velenik-2025a}. Combining as in the previous point and taking \(R\) large enough and then \(\epsilon\) small enough yields the desired result.
\end{proof}

\subsection{Proof of Theorem~\ref{thm:Covariance_decay}}
\label{subsec:covar_decay}

\begin{lemma}
    \label{lem:covariances_UB}
    There are \(n_0\geq 0, r_0\geq 0\), \(c,C>0\) such that for any \(n\geq n_0\), and any \(1\leq i<j\leq n\) with \(j-i\geq r_0 n^{2/3}\),
    \begin{equation*}
        \babs{ \Cov(n^{-1/3}W_i,n^{-1/3}W_j) } \leq C\exp(-c\tfrac{|j-i|}{n^{2/3}}).
    \end{equation*}
\end{lemma}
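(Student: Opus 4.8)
The plan is to prove the bound at the level of the chain $W$, i.e.\ to show $\babs{\Cov(W_i,W_j)}\le Cn^{2/3}\exp(-c|j-i|/n^{2/3})$, which is equivalent to the claim after rescaling. Set $D=j-i\ge r_0n^{2/3}$ and $m=\lfloor(i+j)/2\rfloor$. Fix a constant $R\ge\lambda_0$ and consider the two half‑windows $\mathcal I_-=\{m-\lfloor D/4\rfloor,\dots,m\}$ and $\mathcal I_+=\{m,\dots,m+\lfloor D/4\rfloor\}$, both of length $\asymp D$ and, since $m\pm\lfloor D/4\rfloor=i+\lfloor D/4\rfloor$ resp.\ $j-\lfloor D/4\rfloor$, both strictly inside $(i,j)$. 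Let $F$ be the event that $W$ drops to level $\le Rn^{1/3}$ somewhere in $\mathcal I_-$ \emph{and} somewhere in $\mathcal I_+$, so that $F^{\comp}\subset A^{R}_{\mathcal I_-}\cup A^{R}_{\mathcal I_+}$ in the notation of Section~\ref{subsec:dev_proba_moments}. By Lemma~\ref{lem:high_exc_UB} applied with $\lambda=R$ and window length $\ge(r_0/4)n^{2/3}\ge r_0'\sqrt R\,n^{2/3}$ (which holds once $r_0$ is large enough), $P(F^{\comp})\le 2\exp(-cRD/n^{2/3})$. On $F^{\comp}$ we estimate, by Cauchy–Schwarz,
\[
\babs{E\bigl((W_i-EW_i)(W_j-EW_j)\mathds{1}_{F^{\comp}}\bigr)}\le\sqrt{E\bigl((W_i-EW_i)^2(W_j-EW_j)^2\bigr)}\,\sqrt{P(F^{\comp})},
\]
and the fourth‑moment factor is $O(n^{4/3})$ by the moment bounds of Lemma~\ref{lem:dev_proba_UB} (extended in the trivial way to indices close to $0$ or $n$, where $W$ is pinned and the moments are only smaller); hence this contribution is $O(n^{2/3}\exp(-cD/n^{2/3}))$.

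On $F$, let $S$ be the leftmost point of $\mathcal I_-$ and $T$ the rightmost point of $\mathcal I_+$ at which $W\le Rn^{1/3}$. Then $i<S\le m\le T<j$, $W_S,W_T\le Rn^{1/3}$, and, crucially, $\{S=s,W_s=x\}$ is measurable with respect to $(W_0,\dots,W_s)$ and $\{T=t,W_t=y\}$ with respect to $(W_t,\dots,W_n)$. By the Markov property, conditionally on $(S,W_S,T,W_T)$ the variables $W_i$ and $W_j$ are independent, and $E\bigl(W_i\bgiven S=s,W_s=x,T=t,W_t=y\bigr)=\phi_1(s,x):=E\bigl(W_i\bgiven S=s,W_s=x\bigr)$, with a symmetric statement for $\phi_2$. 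Therefore
\[
E\bigl((W_i-EW_i)(W_j-EW_j)\mathds{1}_F\bigr)=E\Bigl(\mathds{1}_F\bigl(\phi_1(S,W_S)-EW_i\bigr)\bigl(\phi_2(T,W_T)-EW_j\bigr)\Bigr).
\]
Granting the a priori bound $0\le\phi_1(s,x)\le Cn^{1/3}$ for $s\in\mathcal I_-$, $0\le x\le Rn^{1/3}$ (an excursion estimate of the kind used in Lemmas~\ref{lem:high_exc_condUB}--\ref{lem:dev_proba_UB}) together with the \emph{loss‑of‑memory estimate}
\[
\sup\bigl\{\babs{\phi_1(s,x)-\phi_1(s',x')}:\,s,s'\in\mathcal I_-,\ 0\le x,x'\le Rn^{1/3}\bigr\}\le Cn^{1/3}\exp(-cD/n^{2/3}),
\]
and combining the latter with the $F^{\comp}$ bound to get $\babs{EW_i-\phi_1(s_0,x_0)}=O(n^{1/3}\exp(-cD/n^{2/3}))$, we obtain $\babs{\phi_1(S,W_S)-EW_i}\le Cn^{1/3}\exp(-cD/n^{2/3})$ on $F$, and likewise for $\phi_2$; plugging this in gives $E((W_i-EW_i)(W_j-EW_j)\mathds{1}_F)=O(n^{2/3}\exp(-cD/n^{2/3}))$ and hence the lemma.

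The loss‑of‑memory estimate is the main obstacle. Conditionally on $\{S=s,W_s=x\}$, the segment $(W_0,\dots,W_s)$ is an $h_n$‑free, non‑negative, area‑tilted bridge from $0$ to $x$, further conditioned to stay above level $Rn^{1/3}$ on $\{m-\lfloor D/4\rfloor,\dots,s-1\}$; since $i<m-\lfloor D/4\rfloor$, applying the Markov property at time $\ell:=m-\lfloor D/4\rfloor$ reduces the statement to showing that $w\mapsto E(W_i\bgiven W_\ell=w)$ varies by at most $Cn^{1/3}\exp(-c(\ell-i)/n^{2/3})$ as $w$ ranges over the (diffusive, $\Theta(n^{1/3})$‑scale) bulk of the relevant conditional law — i.e.\ that the value at time $i$ forgets the value $\Theta(n^{2/3})$ steps to its right exponentially fast on the scale $n^{2/3}$. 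Writing $E(W_i\bgiven W_\ell=w)$ as the ratio $\sum_z z\,\kerPF^{n}_{0,i}(0,z)\kerPF^{n}_{i,\ell}(z,w)\big/\kerPF^{n}_{0,\ell}(0,w)$ and inserting the sharp barrier‑bridge asymptotics of~\cite{Ott+Velenik-2025a} (the same local‑limit‑type estimates invoked in Lemmas~\ref{lem:high_exc_condUB}--\ref{lem:dev_proba_UB}), the $w$‑dependence factors out up to a multiplicative error $1+O(\exp(-c(\ell-i)/n^{2/3}))$, which yields the bound; alternatively, the now‑standard coupling argument applies — two copies of the constrained bridge issued from different heights both reach a common low level within $O(n^{2/3})$ steps with probability bounded below uniformly in $n$, so the coupling time has exponential tails on scale $n^{2/3}$. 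The boundary cases $i$ near $0$ or $j$ near $n$ are only easier, since one endpoint is then pinned and the corresponding $\phi$ is trivially small.
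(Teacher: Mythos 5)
Your proposal takes a genuinely different route from the paper, but it has a real gap at the crucial step, which you yourself flag as ``the main obstacle''.

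The paper's proof is a direct coupling argument: write \(2\Cov(W_i,W_j)=E\bigl((W_i-W_i')(W_j-W_j')\prod_{k=i}^j\mathds{1}_{W_k\neq W_k'}\bigr)\) for an independent copy \(W'\), apply Cauchy--Schwarz, and then show \(P(W_k\neq W_k'\ \forall\,k\in[i,j])\leq e^{-c|j-i|/n^{2/3}}\) by a block decomposition (via Lemma~\ref{lem:high_exc_UB} both copies are simultaneously low in a positive fraction of blocks of size \(\Theta(n^{2/3})\), and a second-moment argument shows two low paths intersect in a block with uniformly positive conditional probability). No mixing/decorrelation lemma for the kernel is ever needed. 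Your route instead splits on a ``drop-down near the midpoint'' event \(F\), uses the Markov property at the hitting times \(S,T\) to make \(W_i,W_j\) conditionally independent, and then needs a \emph{loss-of-memory estimate}: \(\phi_1(s,x)=E(W_i\mid S=s,W_s=x)\) must be, up to \(O(n^{1/3}e^{-c(\ell-i)/n^{2/3}})\), independent of the terminal data over the relevant range. That estimate is the entire content of the lemma and it is not proved. The first justification you offer (``inserting the sharp barrier-bridge asymptotics, the \(w\)-dependence factors out up to a multiplicative error \(1+O(e^{-c(\ell-i)/n^{2/3}})\)'') does not follow from the estimates cited in Lemmas~\ref{lem:high_exc_condUB}--\ref{lem:dev_proba_UB}: those are two-sided bounds with unspecified multiplicative constants, so they can never yield a ratio that is \(1+o(1)\), let alone with exponential precision. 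The second justification (``the now-standard coupling argument'') is precisely the work the paper actually does, and invoking it here essentially redirects the argument back to the paper's proof, so it is not an alternative but the missing ingredient.

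There are also smaller points to tighten if you pursued this route: the a priori bound \(0\leq\phi_1(s,x)\leq Cn^{1/3}\) is not immediate, because conditioning on \(\{S=s\}\) forces \(W\) to stay above \(Rn^{1/3}\) on \(\{\ell,\dots,s-1\}\), a ``stay high'' condition whose effect at time \(i\) is exactly what the loss-of-memory estimate is supposed to control (one can get the crude bound without it, but this requires a separate argument); and the ``relevant range of \(w\)'' is not only the \(\Theta(n^{1/3})\) diffusive scale, since the conditioning \(\{S=s\}\) with \(s>\ell\) forces \(W_\ell>Rn^{1/3}\) and the conditional law of \(W_\ell\) can put non-negligible mass on larger values. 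The overall scheme (split on \(F\), Cauchy--Schwarz off \(F\), conditional independence on \(F\)) is sound, but until the mixing estimate is actually established, the proof is incomplete.
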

\begin{proof}
	Let \(W'\) be an independent copy of \(W\). One then has that by Markov's property,
	\begin{equation*}
		2\Cov(W_i,W_j)
		=
		E\big((W_i-W_i')(W_j-W_j')\big)
		=
		E\Big((W_i-W_i')(W_j-W_j')\prod_{k=i}^j \mathds{1}_{W_k\neq W_{k}'}\Big).
	\end{equation*}Thus, by Cauchy--Schwartz inequality, and Lemma~\ref{lem:dev_proba_UB},
	\begin{equation}
		\label{eq:prf:lem:covariances_UB:Cov_to_non_intersect}
		\babs{\Cov(W_i,W_j)}
		\leq
		Cn^{2/3} P(W_k\neq W_k'\ k=i,\dots,j)^{1/2}.
	\end{equation}

    Let \(R>0\) to be fixed later. Let \(\ell = \lfloor \frac{j-i}{3n^{2/3}}\rfloor\). Then, let \(i=L_0<L_1<\dots <L_{\ell} = j\) be such that
    \begin{equation*}
        3n^{2/3} \leq L_{k}-L_{k-1} \leq 6n^{2/3}.
    \end{equation*}
    For \(k=1,\dots, \ell\), introduce
    \begin{equation*}
        \Inter_k = \sum_{l=L_{k-1} +1}^{L_k} \mathds{1}_{W_l=W_l'},
        \quad
        \Small_k = \bigcap_{l=L_{k-1}+1}^{L_k}\{W_l\leq R n^{1/3}\}\cap \{W_l'\leq R n^{1/3}\},
    \end{equation*}
    the number of intersections of \(W\) and \(W'\) in the time interval \(\{L_{k-1}+1,\dots, L_k\}\), and the event that \(W,W'\) are both below height \(Rn^{1/3}\) in this time interval. Let then
    \begin{equation*}
        M= \sum_{k=1}^{\ell} \mathds{1}_{\Small_k}.
    \end{equation*}
    
    We will proceed in two steps. First we show that there are many places where the fields are small: for \(R\) large enough, there are \(\epsilon, c, C>0\) such that
    \begin{equation}
        \label{eq:prf:lem:covariances_UB:small}
        P\bigl(M \leq \epsilon \ell \bigr) \leq C\exp(-c\ell).
    \end{equation}
    Then, we show that when the fields are small, there is a positive probability that they meet: there is \(c>0\) such that for any \(0\leq u,v,u',v'\leq Rn^{1/3}\), and any \(k\in\{1,\dots, \ell\}\),
    \begin{equation}
        \label{eq:prf:lem:covariances_UB:intersections_when_small}
        P\bigl(\Inter_k = 0 \bgiven \Small_k, W_{L_{k-1}+1} = u, W_{L_k} = v, W_{L_{k-1}+1}' = u', W_{L_k}' = v' \bigr) \leq e^{-c}.
    \end{equation}
    From these, a comparison with i.i.d.\ Bernoulli random variables gives
    \begin{equation*}
        P\bigl(\cap_{k=i}^{j}\{W_k\neq W_k'\} \bigr)
        \leq
        Ce^{-c\ell},
    \end{equation*}
    which we plug in~\eqref{eq:prf:lem:covariances_UB:Cov_to_non_intersect} to get
    \begin{equation*}
        \babs{\Cov(W_i,W_j)}
        \leq
        Cn^{2/3}e^{-c\ell},
    \end{equation*}
    which gives the claim. Remains to prove~\eqref{eq:prf:lem:covariances_UB:small} and~\eqref{eq:prf:lem:covariances_UB:intersections_when_small}.

    \medskip
    \begin{figure}
    	\centering
    	\includegraphics{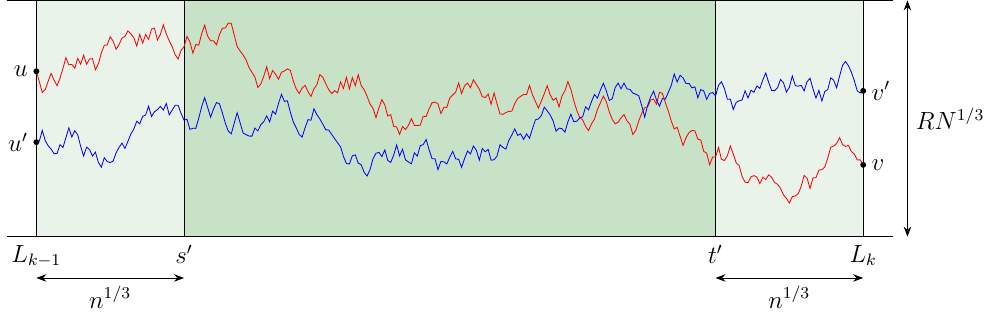}
    	\caption{The construction in the proof of Lemma~\ref{lem:covariances_UB}. The two independent paths are constrained to lie inside the shaded area. the goal is to prove that there is a reasonable probability that they intersect inside the dark shaded region.}
    	\label{fig:lem:covariances_UB}
    \end{figure}
    \textbf{We start by showing~\eqref{eq:prf:lem:covariances_UB:intersections_when_small}.} For \(0\leq u,v,u',v'\leq Rn^{1/3}\), and \(k\in\{1,\dots, \ell\}\), we have (see Figure~\ref{fig:lem:covariances_UB})
    \begin{multline*}
        P\bigl(\Inter_k > 0 \bgiven \Small_k, W_{s} = u, W_{t} = v, W_{s}' = u', W_{t}' = v' \bigr)
        \\
        =
        \frac{E\Bigl( \mathds{1}_{Z_{t}=v}\mathds{1}_{Z_{t}'=v'} (1-\prod_{l=s}^t \mathds{1}_{Z_l \neq Z_l'})\prod_{l= s}^{t} e^{-\frac{\alpha_l}{n} (Z_l+Z_l')} \mathds{1}_{Rn^{1/3}\geq Z_l, Z_l' \geq 0} \Bgiven Z_{s} = u, Z_{s}' = u'\Bigr)}{E\Bigl( \mathds{1}_{Z_{t}=v}\mathds{1}_{Z_{t}'=v'} \prod_{l= s}^{t} e^{-\frac{\alpha_l}{n} (Z_l+Z_l')} \mathds{1}_{Rn^{1/3}\geq Z_l, Z_l' \geq 0} \Bgiven Z_{s} = u, Z_{s}' = u'\Bigr)}
        \\
        \geq
        e^{-12R\alpha_+} P_{u,u'}^{v,v'}\bigl( \cup_{l= s'}^{t'}\{Z_l = Z_l'\} \bgiven D \bigr),
    \end{multline*}
    where \(Z'\) is an independent copy of \(Z\), we used the shorthands \(s = L_{k-1}+1, t= L_k\), \(s' = s+\lceil n^{2/3}\rceil, t' = t-\lceil n^{2/3}\rceil\), the fact that \(t-s \leq 6n^{2/3}\), and we introduced the double bridge measure
    \begin{equation*}
        P_{u,u'}^{v,v'}(\cdot ) = P\bigl( \cdot \bgiven Z_{s} = u, Z_{s}' = u', Z_t =v, Z_{t}' = v'\bigr),
    \end{equation*}
    and the event
    \begin{equation*}
        D = \cap_{l= s}^{t} \{Rn^{1/3}\geq Z_l, Z_l' \geq 0\}.
    \end{equation*}
    Let then
    \begin{equation*}
        N = \sum_{l=s'}^{t'} \mathds{1}_{Z_l= Z_l'}
    \end{equation*}
    be the intersection number. We will use a second moment method: \(P_{u,u'}^{v,v'}(N >0\given D) \geq \tfrac{E_{u,u'}^{v,v'}(N \given D)^2}{E_{u,u'}^{v,v'}(N^2\given D)}\). First, for \(s'\leq l\leq t'\), we have that by~\cite[Theorem 7.3]{Ott+Velenik-2025a}
    \begin{multline*}
        P_{u,u'}^{v,v'}\bigl(Z_l= Z_l' \bgiven D \bigr)
        =
        \sum_{0\leq w\leq Rn^{1/3}} P_{u,u'}^{v,v'}\bigl(Z_l= Z_l' = w \bgiven D \bigr)
        \\
        \geq
        C\sum_{0\leq w\leq Rn^{1/3}} \tfrac{\min(w+1,Rn^{1/3}-w+1, n^{1/3})^4}{n^2}e^{-cR^2}
        \geq
        \tfrac{C}{n^2} Rn^{1/3}
        n^{4/3}e^{-cR^2}
        =
        \tfrac{CR}{n^{1/3}}e^{-cR^2},
    \end{multline*}
    where the \(C\in (0,+\infty)\) do not depend on \(R\) as long as \(R \geq 6\) (and \(n\) large enough as a function of \(R\)). In particular, \(E(N\given D) \geq CRn^{1/3}\). Now, proceeding in a similar fashion as for the lower bound on the expectation of \(N\), for \(s'\leq l < r \leq t'\),
    \begin{align*}
        P_{u,u'}^{v,v'}\bigl(Z_l= Z_l', Z_r = Z_r' \bgiven D \bigr)
        &=
        \sum_{0\leq w, z\leq Rn^{1/3}} P_{u,u'}^{v,v'}\bigl(Z_l= Z_l' = w, Z_r = Z_r' = z \bgiven D \bigr)
        \\
        &\leq 
        C e^{cR^2}\sum_{0\leq w, z\leq Rn^{1/3}} \tfrac{n^{4/3}}{n^2} P(Z_{r} = z \given Z_l = w)^2
    \end{align*}
    where we used~\cite[Theorem 7.3]{Ott+Velenik-2025a} (together with straightforward algebra).
    Now, by the inhomogeneous LLT~\cite[Theorem 4.1]{Ott+Velenik-2025a}, and a straightforward large deviation bound, see for example~\cite[Lemma 2.1]{Ott+Velenik-2025a}, we have
    \begin{equation*}
        P(Z_{r} = z \given Z_l = w)
        \leq
        \begin{cases}
            \tfrac{C}{\sqrt{r-l}}e^{-c(z-w)^2/(r-l)} & \text{ if } |z-w|\leq \rho (r-l),
            \\
            e^{-c|z-w|} & \text{ otherwise,}
        \end{cases}
    \end{equation*}
    for some \(\rho,c,C >0\). Thus,
    \begin{equation*}
        \sum_{0\leq w, z\leq Rn^{1/3}} P(Z_{r} = z \given Z_l = w)^2
        \leq
        Rn^{1/3} \tfrac{C}{\sqrt{r-l}}.
    \end{equation*}
    In particular,
    \begin{equation*}
        P_{u,u'}^{v,v'}\bigl(Z_l= Z_l', Z_r = Z_r' \bgiven D \bigr)
        \leq
        \tfrac{C Re^{cR^2}}{n^{1/3}\sqrt{r-l}},
    \end{equation*}
    and so
    \begin{align*}
        E_{u,u'}^{v,v'}(N^2\given D) 
        &=
        \sum_{l=s'}^{t'} P_{u,u'}^{v,v'}\bigl(Z_l= Z_l' \bgiven D \bigr) + 2\sum_{s'\leq l < r\leq t'} P_{u,u'}^{v,v'}\bigl(Z_l= Z_l', Z_r = Z_r' \bgiven D \bigr)
        \\
        &\leq
        E_{u,u'}^{v,v'}(N\given D) + \tfrac{C Re^{cR^2}}{n^{1/3}} n^{2/3}n^{1/3}
        \\
        &\leq
        E_{u,u'}^{v,v'}(N\given D) + C Re^{cR^2} n^{2/3}.
    \end{align*}
    We thus obtained
    \begin{equation*}
        P_{u,u'}^{v,v'}(N >0\given D)
        \geq
        \frac{E_{u,u'}^{v,v'}(N \given D)^2}{E_{u,u'}^{v,v'}(N^2\given D)}
        \geq
        \frac{E_{u,u'}^{v,v'}(N \given D)}{1 + \frac{C Re^{cR^2} n^{2/3}}{E_{u,u'}^{v,v'}(N\given D)}}
        \geq
        \frac{CRn^{1/3}e^{-cR^2}}{1 + C' n^{1/3}e^{cR^2}},
    \end{equation*}
    which implies
    \begin{equation*}
        P\bigl(\Inter_k > 0 \bgiven \Small_k, W_{s} = u, W_{t} = v, W_{s}' = u', W_{t}' = v' \bigr)
        \geq
        \frac{CRe^{-cR^2}}{1 + C' e^{cR^2}}e^{-12R\alpha_+},
    \end{equation*}
    which is~\eqref{eq:prf:lem:covariances_UB:intersections_when_small}.

    \medskip
    \textbf{To conclude, we prove~\eqref{eq:prf:lem:covariances_UB:small}.}
    Introduce
    \begin{gather*}
        \overline{\Small}_k = \bigcup_{l=L_{k-1}+1}^{L_k}\{W_l\leq R n^{1/3}/10\},
        \quad
        \overline{\Small}_k' = \bigcup_{l=L_{k-1}+1}^{L_k}\{W_l'\leq R n^{1/3}/10\}
        \\
        \calM = \sum_{k=1}^{\ell} \mathds{1}_{\overline{\Small}_k},
        \quad
        \calM' = \sum_{k=1}^{\ell} \mathds{1}_{\overline{\Small}_k'},
        \\
        \PreSmall_k = \overline{\Small}_{k-1}\cap \overline{\Small}_{k+1} \cap \overline{\Small}_{k-1}'\cap \overline{\Small}_{k+1}'.
    \end{gather*}
    We will first show that \(\calM , \calM '\) are close to \(\ell\) with high probability. Then, we will show that conditionally on the realization of \(\PreSmall_k\), \(\Small_{k}\) has positive probability, which will allow to conclude. We have
    \begin{equation*}
        (\overline{\Small}_k)^c = \bigcap_{l=L_{k-1}+1}^{L_k}\{W_l> R n^{1/3}/10\}.
    \end{equation*}
    Now, under the event \(\{\calM \leq \ell - r\}\), we have that there are at least \(r\) indices \(k\in \{1,\dots,\ell\}\) such that the field is above \(R n^{1/3}/10\) in the time interval \(\{L_{k-1}+1,\dots L_k\}\). So,
    \begin{equation*}
        \{\calM \leq \ell - r\} \implies \sum_{l=i}^{j} W_l \geq \tfrac{3r R n}{10}.
    \end{equation*}
    In particular, by Lemma~\ref{lem:high_exc_UB}, for \(R\) large enough, and \(\frac{j-i}{n^{2/3}}\) large enough as a function of \(R\),
    \begin{equation*}
        P\bigl(\calM \leq \tfrac{99}{100}\ell\bigr)
        \leq
        P\Bigl(\sum_{l=i}^{j} W_l \geq \tfrac{3 R n}{1000}\ell\Bigr)
        \leq
        P\Bigl(\sum_{l=i}^{j} W_l \geq \tfrac{ R n^{1/3}}{2000}(i-j)\Bigr)
        \leq
        \exp(-\tfrac{c R}{2000}\tfrac{j-i}{n^{2/3}}).
    \end{equation*}
    Introduce now
    \begin{gather*}
        \calI_4 = \bigr\{k\in \{1,\dots,\ell-1\}:\, k = 0\mod 4\bigr\},
        \quad
        I_4 = \bigr\{k\in \calI_4:\, \mathds{1}_{\PreSmall_k} = 1\bigr\}.
    \end{gather*}
    
    Now, under \(\{\calM > \tfrac{99}{100}\ell\}\cap \{\calM' > \tfrac{99}{100}\ell\}\), one has that there are at least \(\tfrac{98}{100}\) of the indices \(k\in \{1,\dots, \ell\}\) such that \(\overline{\Small}_k\cap \overline{\Small}_k'\) is realized. In particular, there are at least \(\tfrac{96}{100}\) of the odd indices in \(\{1,\dots,\ell\}\) such that \(\overline{\Small}_k\cap \overline{\Small}_k'\) is realized. Now, this implies that there are at least \(\tfrac{92}{100}\) of the \(k\) even in \(\{2,\dots,\ell-1\}\) such that \(\PreSmall_k\) is realized, which in turn implies that \(|I_4| \geq \tfrac{84}{100} |\calI_4|\). For \(k\in I_4\), define
    \begin{gather*}
        \tau_{k}^+ = \min\{l\in \{L_{k}+1,\dots, L_{k+1}\}:\, W_{l}\leq \tfrac{R n^{1/3}}{10}\bigr\},
        \\
        \tau_k^- = \max\{l\in \{L_{k-2}+1,\dots, L_{k-1}\}:\, W_{l}\leq \tfrac{R n^{1/3}}{10}\bigr\}.
    \end{gather*}Now, for \(k\in \calI_4\), \(s\in \{L_{k-2}+1,\dots, L_{k-1}\}\), \(t\in \{L_{k}+1,\dots, L_{k+1}\}\), and \(0\leq u, v< \frac{R n^{1/3}}{10}\), we have (using the same arguments as we used in the proof of~\eqref{eq:prf:lem:covariances_UB:intersections_when_small})
    \begin{multline*}
        P\bigl( \cap_{l=L_{k-1}+1}^{L_k}\{W_k \leq Rn^{1/3}\} \bgiven W_s=u, W_t = v \bigr)
        \\
        \geq
        e^{-\alpha_+ Rn^{-2/3} (t-s)}\frac{P(Z_t = v,\, \cap_{l=s}^t\{ Rn^{1/3}\geq Z_i\geq 0\} \given Z_s = u)}{P(Z_t = v,\, \cap_{l=s}^t\{Z_i\geq 0\} \given Z_s = u)}
        \\
        \geq
        e^{-18 \alpha_+ R}\frac{C \min(u+1,n^{1/3}) \min(v+1,n^{1/3})e^{-cR^2}n}{n\min(u+1,n^{1/3}) \min(v+1,n^{1/3})}
        =
        Ce^{-18\alpha_+R}e^{-cR^2}
        \eqcolon
        q_R>0
    \end{multline*}
    by~\cite[Theorem 7.3 and Lemma 6.2]{Ott+Velenik-2025a}.
    This, using Markov's property and independence, straightforwardly implies that, conditionally on \(I_4 = I\), the family \((\mathds{1}_{\Small_k})_{k\in I}\) stochastically dominates an i.i.d. family of Bernoulli random variables indexed by \(I\) with parameter \(q_R^2\). Thus, partitioning over the values of \(I_4\), we get
    \begin{equation*}
        P\bigl( M \leq \tfrac{q_R^2}{8}\ell \bigr)
        \leq
        2P\bigl(\calM\leq \tfrac{99}{100}\ell \bigr) +
        \sum_{I\subset \calI_4:\, |I|\geq \frac{21}{100}\ell} P(I_4 = I) P\Bigl(\sum_{k\in I} \xi_k \leq \tfrac{q_R^2}{8}\ell \Bigr)
    \end{equation*}
    where the \(\xi_k\)'s, \(k\in \calI_4\) are an i.i.d. collection of Bernoulli random variables with parameter \(q_R^2\). Large deviation bounds for sums of Bernoulli and the previously obtained bound on \(P\bigl(\calM\leq \tfrac{99}{100}\ell \bigr)\) directly give~\eqref{eq:prf:lem:covariances_UB:small} for \(R\) large enough.
\end{proof}

\section{Gaussian walk: proof of Theorem~\ref{thm:xp}}
\label{sec:Gaussian_p}

\subsection{Framework and notations}

We will leave \(p\) implicit in the functions \(h,h_n\). Let
\begin{gather*}
    h:[-1,1]\to [0,1],\quad h(x) = 1-\abs{x}^p,
    \\
    h_n(x) = nh(x/n).
\end{gather*}
For readability, define
\begin{equation*}
    \alpha_p = \frac{p-1}{2p-1},\quad T=\frac{1}{\beta}.
\end{equation*}

We are interested in the typical height of \(S_0\) given \(S_{-n} = 0= S_n\) and \(S_k \geq nh(k/n)\), when \(S\) is a random walk with Gaussian increments. Let \(\Lambda_{n,m} = \{-n+1,\dots, m-1\}\). Write
\begin{equation*}
    H_{n,m}(\varphi) = \frac{1}{2}\sum_{k=-n+1}^{m}(\varphi_{k}-\varphi_{k-1})^2.
\end{equation*}
For \(g\in (\R\cup \{- \infty\})^{\Z}\), and \(\xi\in \R^{\Z}\) with \(\xi\geq g\), define the measures \(\mu_{n,m}^{\xi,g}\) via
\begin{gather*}
    d\mu_{n,m}^{\xi,g}(\varphi) = \frac{1}{Z_{n,m}^{\xi,g}}\delta(\varphi_{\Lambda_{n,m}^c}-\xi_{\Lambda_{n,m}^c})e^{-T H_{n,m}(\varphi)} \mathds{1}_{\varphi_{\Lambda_{n,m}}\geq g_{\Lambda_{n,m}}}d\varphi_{\Lambda_{n,m}},
    \\
    Z_{n,m}^{\xi,g}=\int_{\R^{\Lambda_{n,m}}} d\varphi e^{-T H_{n,m}(\varphi)}\mathds{1}_{\varphi_{\Lambda_{n,m}}\geq g_{\Lambda_{n,m}}},\quad \varphi_{-n}= \xi_{-n}, \varphi_{m} = \xi_m,
\end{gather*}
the Gaussian Free Field on \(\Lambda_{n,m}\) with boundary conditions \(\xi\) conditioned to stay above \(g\). When \(m\) is omitted, it is set to be \(n\).
From the definition, one has that the random walk \(S\) in Theorem~\ref{thm:xp} has law \(\mu_{n}^{0,h_n}\).

The measures \(\mu_{n,m}^{\xi,g}\) are strong FKG. In particular, if \(g\geq g'\), \(\xi \geq \xi'\), \(\xi\geq g\), \(\xi'\geq g'\), one has
\begin{equation*}
    \mu_{n,m}^{\xi',g'} \preccurlyeq \mu_{n,m}^{\xi,g}.
\end{equation*}
We will refer to this property as simply ``monotonicity''. When \(g\) is omitted, it is set to be constant \(-\infty\) (random walk bridge/Gaussian Free Field).

\subsection{Lower bound on height deviation probability}
\label{subsec:Gaussian:LowerBoundHeightdeviation}

\begin{figure}
	\centering
    \includegraphics{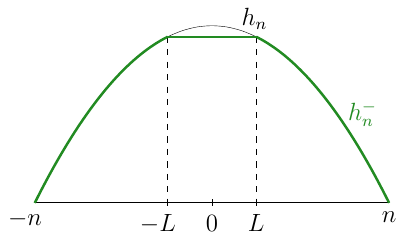}
    \hspace*{5mm}
    \includegraphics{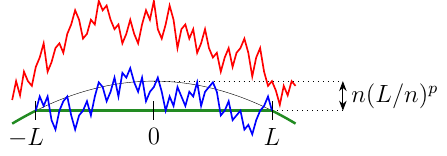}
	\caption{The construction in Section~\ref{subsec:Gaussian:LowerBoundHeightdeviation}. Left: the truncated obstacle \(h_n^-\) (green). Right: zoom near the plateau. The trajectory in red is the original process, conditioned to stay above \(h_n\) (black); the trajectory in blue is the random walk bridge that is stochastically dominated by the original process.}
\end{figure}

For \(n\geq L\geq 0\), introduce
\begin{equation*}
    h_n^-(k) = \begin{cases}
        h_n(L) & \text{ if } |k|\leq L\\
        h_n(k) & \text{ else }
    \end{cases}.
\end{equation*}
One has \(h_n^-\leq h_n\) so
\begin{equation*}
    \mu_{n}^{0,h_n^-} \preccurlyeq \mu_{n}^{0,h_n}.
\end{equation*}

\begin{claim}
    \label{claim:p_obstacle_Gaussian_LB_hminus}
    For any \(L\geq 1\), and any \(\lambda'\geq 0\),
    \begin{equation*}
        \mu_n^{0,h_n^-}\bigl(\varphi_0 \geq h_n(L) + \lambda'\sqrt{L}\bigr) \geq 2P\bigl(\calN(0,1)\geq \lambda'\sqrt{2/\beta}\bigr).
    \end{equation*}
\end{claim}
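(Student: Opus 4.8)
Claim~\ref{claim:p_obstacle_Gaussian_LB_hminus} asks for a lower bound on the probability that the process sits high at the origin, and the natural strategy is to \emph{shrink the domain to the plateau} and compare with a free bridge. The plan is as follows. Restrict attention to the block $\{-L,\dots,L\}$. By the domain Markov property (the Hamiltonian $H$ factorizes over nearest-neighbour edges), conditionally on $(\varphi_{-L},\varphi_L)$ the restriction of $\mu_n^{0,h_n^-}$ to $\{-L+1,\dots,L-1\}$ is the law of a Gaussian random-walk bridge of length $2L$ from $\varphi_{-L}$ to $\varphi_L$, conditioned to stay above the \emph{constant} $h_n(L)$ (the portion of $h_n^-$ lying inside the block). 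Since $\varphi_{\pm L}\geq h_n^-(\pm L)=h_n(L)$ holds $\mu_n^{0,h_n^-}$-almost surely, monotonicity (raising the boundary values from $\varphi_{\pm L}$ down to $h_n(L)$, and integrating over the conditioning) shows that this restriction stochastically dominates the law $\nu$ of a bridge from $h_n(L)$ to $h_n(L)$ conditioned to stay above $h_n(L)$. As $\{\varphi_0\geq h_n(L)+\lambda'\sqrt L\}$ is an increasing event,
\[
    \mu_n^{0,h_n^-}\bigl(\varphi_0\geq h_n(L)+\lambda'\sqrt L\bigr)\;\geq\;\nu\bigl(\varphi_0\geq h_n(L)+\lambda'\sqrt L\bigr)\;=\;\tilde\nu\bigl(\psi_0\geq\lambda'\sqrt L\bigr),
\]
where, after subtracting the constant $h_n(L)$, $\tilde\nu$ denotes the law of the free bridge $\psi$ on $\{-L,\dots,L\}$ with $\psi_{\pm L}=0$ conditioned on $\min_k\psi_k\geq 0$.

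Next I would carry out the elementary Gaussian bookkeeping. Under the \emph{unconstrained} bridge the midpoint value $\psi_0$ is centred Gaussian with variance $\beta L/2$, so $P_{\mathrm{free}}(\psi_0\geq\lambda'\sqrt L)=P(\calN(0,1)\geq\lambda'\sqrt{2/\beta})$, which is precisely half of the target quantity. Hence it remains to establish the factor-two improvement
\[
    \tilde\nu\bigl(\psi_0\geq t\bigr)\;\geq\;2\,P_{\mathrm{free}}\bigl(\psi_0\geq t\bigr),\qquad t:=\lambda'\sqrt L\geq 0 .
\]

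For this last inequality I would decompose the free bridge at its midpoint: conditionally on $\psi_0=h$ the two halves are independent bridges, each of which stays nonnegative with probability $\pi_L(h)$ (the two halves have the same law by the reflection $k\mapsto -k$), where $\pi_L(h)=0$ for $h<0$ and $\pi_L$ is nondecreasing on $[0,\infty)$ by monotonicity. Writing $\rho$ for the $\calN(0,\beta L/2)$ density, $p:=P_{\mathrm{free}}(\psi_0\geq t)$, $A:=\int_0^t\pi_L(h)^2\rho(h)\,dh$ and $B:=\int_t^\infty\pi_L(h)^2\rho(h)\,dh$, one gets $P_{\mathrm{free}}(\min\psi\geq 0)=A+B$ and $P_{\mathrm{free}}(\psi_0\geq t,\min\psi\geq 0)=B$, so $\tilde\nu(\psi_0\geq t)=B/(A+B)$ and the desired bound is equivalent to $B(1-2p)\geq 2pA$. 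This holds because $1-2p\geq 0$ (as $t\geq 0$), while $A\leq\pi_L(t)^2(\tfrac12-p)$ and $B\geq\pi_L(t)^2 p$ (using that $\rho$ is symmetric and $\pi_L$ is nondecreasing), whence $B(1-2p)\geq\pi_L(t)^2p(1-2p)\geq 2pA$. Chaining the three displays yields the claim.

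The routine parts (domain Markov property, the two invocations of monotonicity, the variance computation) are standard; the only step requiring an actual idea is the final factor-two inequality, and even there the work is a short one-line estimate once the midpoint decomposition and the monotonicity of $h\mapsto\pi_L(h)$ are in place. That is the step I would regard as the crux.
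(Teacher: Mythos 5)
Your proposal is correct, but the route to the factor two differs from the paper's. The opening moves coincide: shrink to the plateau $\{-L,\dots,L\}$ by the Markov property, lower the boundary values to $h_n(L)$ by FKG monotonicity, and recenter. Where you then keep the full positivity constraint and prove directly that conditioning the free bridge on $\min_k\psi_k\geq 0$ at least doubles the tail $P_{\mathrm{free}}(\psi_0\geq t)$ --- via the midpoint decomposition, the monotonicity of $h\mapsto\pi_L(h)$, and the estimates on $A,B$ --- the paper takes a shortcut: it relaxes the constraint further, replacing the obstacle $0$ by $g_0$ (which is $0$ at $k=0$ and $-\infty$ elsewhere). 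By FKG this can only decrease the probability, and conditioning the free bridge on the single event $\{\varphi_0\geq 0\}$ multiplies the tail by exactly $1/P_{\mathrm{free}}(\varphi_0\geq 0)=2$ by symmetry, with no further work. So the paper's "factor-two" step is a one-liner, while yours is an honest (and correct) computation; in exchange your argument proves the slightly stronger fact that the fully constrained bridge already has tail at least $2P_{\mathrm{free}}$, rather than obtaining $2P_{\mathrm{free}}$ only for the weaker $g_0$-conditioned measure and then invoking domination.
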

\begin{proof}
    Let
    \begin{equation*}
        g_0(k) =
        \begin{cases}
            0 & \text{ if } k=0,\\
            -\infty &\text{ if } k\neq 0.
        \end{cases}
    \end{equation*}
    By monotonicity (strong FKG) of \(\mu\), and the spatial Markov property, one has
    \begin{multline*}
        \mu_n^{0,h_n^-}\bigl(\varphi_0 \geq h_n(L) + \lambda'\sqrt{L}\bigr)
        \geq
        \mu_{L}^{h_n(L),h_n(L)}\bigl(\varphi_0 \geq h_n(L) + \lambda'\sqrt{L}\bigr)
        \\
        =
        \mu_{L}^{0,0}\bigl(\varphi_0 \geq \lambda'\sqrt{L}\bigr)
        \geq
        \mu_{L}^{0,g_0}\bigl(\varphi_0 \geq \lambda'\sqrt{L}\bigr).
    \end{multline*}
    Now, the last probability can be rephrased as
    \begin{equation*}
        \mu_{L}^{0,g_0}\bigl(\varphi_0 \geq \lambda'\sqrt{L}\bigr) = \frac{\mu_{L}^{0}\bigl(\varphi_0 \geq \lambda'\sqrt{L}\bigr)}{\mu_{L}^{0}\bigl(\varphi_0 \geq 0\bigr)} = 2\mu_{L}^{0}\bigl(\varphi_0 \geq \lambda'\sqrt{L}\bigr),
    \end{equation*}
    by symmetry. Under \(\mu_{L}^{0}\), \(\varphi_0\) is a centred Gaussian random variable with variance \(\beta L /2\)
    by Lemma~\ref{lem:Gaussian_Walk:Walk_Bridge_are_Gaussian}. Thus,
    \begin{equation*}
        \mu_{L}^{0}\bigl(\varphi_0 \geq \lambda'\sqrt{L}\bigr) = \frac{1}{\sqrt{\pi \beta L}}\int_{\lambda'\sqrt{L}}^{\infty}dx e^{-x^2/(\beta L)}= \frac{1}{\sqrt{2\pi}} \int_{\lambda'\sqrt{2/\beta}}^{\infty}dx e^{-x^2/2},
    \end{equation*}
    which gives the claim.
\end{proof}

\begin{lemma}
\label{lem:Gaussian:height_dev_proba_LB}
   Let \(p\geq 1\). Let \(S\) be the random walk with i.i.d.\ steps of law \(\calN(0,\beta)\). Then, for any \(\lambda \geq 0\),
    \begin{equation*}
        P\bigl(S_0 \geq n + \lambda n^{\alpha_p} \bgiven S_{-n}=S_n =0,\, S\geq h_n\bigr) \geq 2P\bigl(\calN(0,1)\geq c_{p,\beta} \sqrt{\lambda^{(2p-1)/p}}\bigr),
    \end{equation*}
    where \(c_{p,\beta} = 2p\sqrt{2/\beta}\). In particular, for \(\lambda\geq 1\),
    \begin{equation*}
        P\bigl(S_0 \geq n + \lambda n^{\alpha_p} \bgiven S_{-n}=S_n =0,\, S\geq h_n\bigr) \geq Ce^{-c\lambda^{(2p-1)/p}},
    \end{equation*}
    for some \(C,c>0\) depending only on \(p,\beta\).
\end{lemma}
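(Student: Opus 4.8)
The plan is to combine the two ingredients already set up before the statement: the monotonicity $\mu_n^{0,h_n^-}\preccurlyeq\mu_n^{0,h_n}$ for the plateau-truncated obstacle $h_n^-$, and the explicit lower bound of Claim~\ref{claim:p_obstacle_Gaussian_LB_hminus}. Since $S$ has law $\mu_n^{0,h_n}$, for every plateau half-width $L\in\{1,\dots,n\}$ and every $\lambda'\geq0$ we obtain
\[
    P\bigl(S_0\geq h_n(L)+\lambda'\sqrt L\bgiven S_{-n}=S_n=0,\,S\geq h_n\bigr)
    \;\geq\;\mu_n^{0,h_n^-}\bigl(\varphi_0\geq h_n(L)+\lambda'\sqrt L\bigr)
    \;\geq\;2P\bigl(\calN(0,1)\geq\lambda'\sqrt{2/\beta}\bigr).
\]
Hence it suffices to choose $L$ and $\lambda'$ with $h_n(L)+\lambda'\sqrt L\geq n+\lambda n^{\alpha_p}$ while keeping $\lambda'$ as small as possible (the map $t\mapsto P(\calN(0,1)\geq t)$ is decreasing), and to recall that $\lambda^{(2p-1)/(2p)}=\sqrt{\lambda^{(2p-1)/p}}$.

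Since $h(x)=1-|x|^p$ gives $h_n(L)=n-L^pn^{1-p}$, the requirement reads $\lambda'\sqrt L\geq\lambda n^{\alpha_p}+L^{p}n^{1-p}$. This is the trade-off that fixes the exponent $\alpha_p$: pushing the plateau down (larger $L$) costs a height loss $L^pn^{1-p}$, but opens a Gaussian fluctuation window of order $\sqrt L$ at the bottom of a box of width $2L$. Writing $L=\kappa\,n^{2\alpha_p}$ and using the identity $2p\alpha_p-(p-1)=\alpha_p$ (together with $2\alpha_p/2=\alpha_p$), both $L^pn^{1-p}$ and $\sqrt L$ become multiples of $n^{\alpha_p}$, and the requirement collapses to $\lambda'\sqrt\kappa\geq\lambda+\kappa^{p}$. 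Minimising $(\lambda+\kappa^{p})/\sqrt\kappa$ over $\kappa>0$ yields $\kappa^{p}=\lambda/(2p-1)$ and minimal value $\tfrac{2p}{(2p-1)^{(2p-1)/(2p)}}\,\lambda^{(2p-1)/(2p)}\leq2p\,\lambda^{(2p-1)/(2p)}$. Taking $L=\bigl\lceil(\lambda/(2p-1))^{1/p}n^{2\alpha_p}\bigr\rceil$ (admissible as long as $L\leq n$, which is the range of interest; the rounding only enlarges $\sqrt L$ while $h_n$ is monotone, so it is absorbed for $n$ large) and $\lambda'=2p\,\lambda^{(2p-1)/(2p)}$, the requirement holds, and therefore $\lambda'\sqrt{2/\beta}\leq2p\sqrt{2/\beta}\,\sqrt{\lambda^{(2p-1)/p}}=c_{p,\beta}\sqrt{\lambda^{(2p-1)/p}}$. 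Plugging this into the first display proves the first asserted inequality.

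The ``in particular'' part then follows from the elementary Gaussian tail bound $P(\calN(0,1)\geq s)\geq\tfrac{c}{1+s}e^{-s^2/2}$: for $\lambda\geq1$ one has $s=c_{p,\beta}\sqrt{\lambda^{(2p-1)/p}}\geq c_{p,\beta}$, so the first inequality yields $P(\cdots)\geq C e^{-c\lambda^{(2p-1)/p}}$ with $C,c>0$ depending only on $p,\beta$.

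The only genuinely substantive step is the choice of $L$: one must see that the right balance is $L\asymp\lambda^{1/p}n^{2\alpha_p}$, i.e.\ that the height-loss term $L^pn^{1-p}$ and the fluctuation term $\sqrt L$ must both be of order $n^{\alpha_p}$ — with any mismatched power of $\lambda$ in $L$, the smallest admissible $\lambda'$ grows faster than $\lambda^{(2p-1)/(2p)}$ and the tail exponent degrades below $(2p-1)/p$. Everything else — the monotone reduction, the one-variable minimisation, the rounding of $L$, the exact value of $c_{p,\beta}$, and the passage to the $e^{-c\lambda^{(2p-1)/p}}$ form — is routine bookkeeping.
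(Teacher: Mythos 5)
Your proposal is correct and reproduces the paper's argument essentially verbatim: the same reduction via monotonicity to the plateau-truncated obstacle $h_n^-$, the same application of Claim~\ref{claim:p_obstacle_Gaussian_LB_hminus}, and the same choice of scales $L\asymp\lambda^{1/p}n^{2\alpha_p}$, $\lambda'=2p\,\lambda^{(2p-1)/(2p)}$ obtained by minimising $(\lambda+\kappa^p)/\sqrt\kappa$ (the paper parametrises $L=an^{2\alpha_p}$ and picks $a=(\lambda/(2p-1))^{1/p}$, which is the same minimisation phrased differently). Your explicit handling of the integer rounding of $L$ and the side condition $L\leq n$ is a small cleanliness improvement over the paper's write-up, which glosses over both.
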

\begin{proof}
    The law of \(S\) under \(P(\cdot \given S_{-n}=S_n =0,\, S\geq h_n)\) is \(\mu_{n}^{0,h_n}\). Let \(a>0\). Taking \(L= an^{2\alpha_p}\) and \(\lambda' = \frac{1}{\sqrt{a}}(\lambda+a^p)\) in Claim~\ref{claim:p_obstacle_Gaussian_LB_hminus}, one obtains that
    \begin{multline*}
        \mu_{n}^{0,h_n^-}\bigl(\varphi_{0}\geq n + \lambda n^{\alpha_p}\bigr)
        =
        \mu_{n}^{0,h_n^-}\bigl(\varphi_{0}\geq h_n(L) + \tfrac{L^p}{n^{p-1}} +  \tfrac{\lambda}{\sqrt{a}}\sqrt{L}\bigr)
        \\
        =
        \mu_{n}^{0,h_n^-}\bigl(\varphi_{0}\geq h_n(L) + (\tfrac{a^p}{\sqrt{a}} +  \tfrac{\lambda}{\sqrt{a}})\sqrt{L}\bigr)
        \geq
        2P\bigl(\calN(0,1)\geq \lambda'\sqrt{2 /\beta}\bigr),
    \end{multline*}
    as \(h_n(L) = n- \tfrac{L^p}{n^{p-1}} = n- a^{p-1/2}\sqrt{L} \), and \(L^{1/2} = a^{1/2}n^{\alpha_p}\).

    Taking \(a = (\lambda/(2p-1))^{1/p}\), we get
    \begin{equation*}
        \lambda'= \frac{2p}{2p-1} \lambda^{(2p-1)/2p} (2p-1)^{1/2p}
        \leq
        2p \lambda^{(2p-1)/2p}
    \end{equation*}
    and using \(\mu_{n}^{0,h_n} \succcurlyeq \mu_{n}^{0,h_n^-}\), one obtains that
    \begin{equation*}
        \mu_{n}^{0,h_n}\bigl(\varphi_{0}\geq n + \lambda n^{\alpha_p}\bigr) \geq 2P\bigl(\calN(0,1)\geq 2p \sqrt{2\lambda^{(2p-1)/p}\smash{/}\beta} \bigr),
    \end{equation*}
    which is the wanted result. The last part follows from
    \begin{equation*}
        P(\calN(0,1)\geq a)
        =
        \frac{1}{\sqrt{2\pi}}\int_{a}^{\infty} dx e^{-x^2/2}
        \geq
        \frac{1}{a(1+a^{-2})\sqrt{2\pi}}e^{-a^2/2}
    \end{equation*}
    for \(a>0\).
\end{proof}

\subsection{Upper bound on height deviation probability}
\label{subsec:Gaussian:height_dev_proba_UB}

	\begin{figure}
	\centering
    \includegraphics{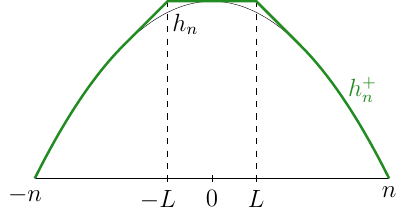}
    \hspace{5mm}
    \includegraphics{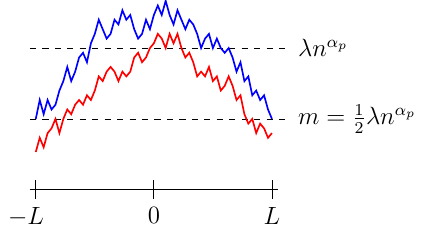}
	\caption{The construction in Section~\ref{subsec:Gaussian:height_dev_proba_UB}. Left: The extended concave obstacle \(h_n^+\) (green). Right: in red, the original process after the change of variables \(\phi\mapsto\psi\) flattening the extended obstacle; when \(\psi_{-L}\vee \psi_L \leq m\), which is likely due to the effective potential acting at these two points, this process is stochastically dominated by the bridge represented in blue.}
\end{figure}

For the upper bound, we will proceed in a similar fashion as for the lower bound, relying again on monotonicity. For \(0\leq L\leq \frac{p-1}pn\), introduce the function \(h_n^+:[-n,n]\to \R\) defined by
\begin{equation*}
    h_n^+(x) =
    \begin{cases}
        n & \text{ if } \abs{x}\leq L,
        \\
        n-\frac{p^pL^{p-1}}{(p-1)^{p-1}n^{p-1}}\bigl( \abs{x} - L\bigr) & \text{ if } L<\abs{x}\leq \tfrac{p}{p-1}L,
        \\
        h_n(x) = n- \frac{\abs{x}^p}{n^{p-1}} & \text{ else}.
    \end{cases}
\end{equation*}
Note that this function is concave and that \(h_n^+\geq h_n\). In particular, the lattice FKG property implies that
\begin{equation}
\label{eq:Gaussian:h_to_hplus}
    \mu_{n}^{0,h_n}(\varphi_0 \geq \lambda)
    \leq
    \mu_{n}^{0,h_n^+}(\varphi_0 \geq \lambda).
\end{equation}
It thus suffices to bound the last probability.
Introduce
\begin{equation*}
    \gamma_i
    =
    2h_n^+(i) - h_n^+(i-1)-h_n^+(i+1) \geq 0.
\end{equation*}
One has (recall \(T= 1/\beta\)), for \(f:\R^{\{-n,\dots,n\}}\to \R\),
\begin{align}
\label{eq:Gaussian_change_of_measure}
    \nonumber\int d\mu_n^{0,h_n^+}(\varphi) f(\varphi)
    &=
    \frac{\int d\varphi e^{-TH_{n}(\varphi) } f(\varphi)\mathds{1}_{\varphi\geq h_n^+} }{\int d\varphi e^{- TH_{n}(\varphi) } \mathds{1}_{\varphi\geq h_n^+}}
    \\
    &=
    \frac{\int d\psi e^{-TH_{n}(\psi) } e^{-T\sum_{i=1-n}^{n-1}\gamma_i \psi_i} f(\psi + h_n^+) \mathds{1}_{\psi\geq 0} }{\int d\psi e^{-TH_{n}(\psi) } e^{-T\sum_{i=1-n}^{n-1}\gamma_i \psi_i}\mathds{1}_{\psi\geq 0}},
\end{align}
where \(\varphi_{-n}=\varphi_n = 0\), and we used the change of variable \(\psi = \varphi - h_n^+\), and the identity
\begin{align}
    H_n(\psi + h_n^+)
    &=
    H_n(\psi) + H_n(h_n^+) + \sum_{i=1-n}^{n}(\psi_i-\psi_{i-1})(h_n^+(i)-h_{n}^+(i-1))
    \\
    &=
    H_n(\psi) + H_n(h_n^+) + \sum_{i=1-n}^{n-1}\psi_i \gamma_i.
\end{align}
We will use heavily that
\begin{equation*}
    \gamma_{-L} = \gamma_L = \frac{p^p L^{p-1}}{(p-1)^{p-1} n^{p-1}}.
\end{equation*}
Denote \(Q'',Q',Q\) the probability measures given by
\begin{gather*}
    dQ''(\psi) \propto  e^{-TH_{n}(\psi) } e^{-T\sum_{i=1-n}^{n-1}\gamma_i \psi_i} \mathds{1}_{\psi\geq 0}d\psi,
    \\
    dQ'(\psi) \propto  e^{-TH_{n}(\psi) } e^{-T\gamma_L \psi_L - T \gamma_{L}\psi_{-L}} \mathds{1}_{\psi\geq 0}d\psi,
    \\
    dQ(\psi)
    \propto
    g_{n,\beta}(\psi) e^{-T\gamma_L \psi_L - T \gamma_{L}\psi_{-L}} d\psi,
\end{gather*}
where
\begin{equation*}
    g_{n,\beta}(\psi)
    =
    g_{1,\dots,2n-1}^{(2n)}\bigl( \sqrt{T}\psi_{1-n},\dots, \sqrt{T}\psi_{n-1} \bigr),
\end{equation*}
with \(g_{1,\dots,2n-1}^{(2n)}\) the density of the Brownian excursion of length \(2n\) at integer times (see Appendix~\ref{app:comparison_Brown_excursion}). Now, again using the FGK-lattice property and Lemma~\ref{lem:Gaussian:excursion_comparison}, one has that
\begin{equation*}
    Q''\preccurlyeq Q'\preccurlyeq Q.
\end{equation*}
We start with the technical part of the estimate.
\begin{lemma}
    \label{lem:Gaussian:height_normal_dev_proba_UB}
    Let \(\beta>0, p\geq 1\). Then, there are \(c>0, n_0\geq 1, \lambda_0\geq 1\) such that for any \(n\geq n_0\), \(\lambda \geq \lambda_0\) with \(\lambda n^{\alpha_p}< n/2^p\),
    \begin{equation*}
        \mu_n^{0,h_n^+}(\varphi_0\geq \lambda n^{\alpha_p}+n)
        \leq
        e^{-c\lambda^{(2p-1)/p}}.
    \end{equation*}
\end{lemma}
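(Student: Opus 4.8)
The plan is to transfer the statement to the measure \(Q\) (which is allowed since \(Q''\preccurlyeq Q\) and \(\{\psi_0\ge t\}\) is increasing), and then to split the deviation of \(\psi_0\) into the two competing mechanisms that make it costly: a large value of \(\psi\) at the two kink points \(\pm L\), which is penalized by the effective potential \(e^{-T\gamma_{\pm L}\psi_{\pm L}}\) present exactly there, versus a large excursion of the flat part \((\psi_i)_{|i|\le L}\) above its endpoints. Concretely, since \(h_n^+(0)=n\), the change of variables \(\psi=\varphi-h_n^+\) turns \(\{\varphi_0\ge\lambda n^{\alpha_p}+n\}\) into \(\{\psi_0\ge\lambda n^{\alpha_p}\}\) and \(\mu_n^{0,h_n^+}(\varphi_0\ge\lambda n^{\alpha_p}+n)=Q''(\psi_0\ge\lambda n^{\alpha_p})\le Q(\psi_0\ge\lambda n^{\alpha_p})\). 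I would fix \(L=\lceil c_p\lambda^{1/p}n^{2\alpha_p}\rceil\) with \(c_p\) a small constant depending only on \(p\); the hypothesis \(\lambda n^{\alpha_p}<n/2^p\) is precisely what lets one ensure \(L\le\frac{p-1}{p}n\), so that \(h_n^+\) is well defined (the degenerate case \(p=1\) is treated separately and is easier). With this choice, \(s:=T\gamma_{\pm L}=\tfrac1\beta\frac{p^pL^{p-1}}{(p-1)^{p-1}n^{p-1}}\) is of order \(\lambda^{(p-1)/p}n^{-\alpha_p}\), and using \(\lambda\ge1\) one has \(1\lesssim 1/s\lesssim\sqrt L\). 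Setting \(m=\tfrac12\lambda n^{\alpha_p}\), a union bound gives
\[
Q(\psi_0\ge\lambda n^{\alpha_p})\le Q(\psi_{-L}\ge m)+Q(\psi_L\ge m)+Q\bigl(\psi_0\ge\lambda n^{\alpha_p},\ \psi_{-L}<m,\ \psi_L<m\bigr).
\]

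For the first two terms, say \(Q(\psi_L\ge m)\): by definition of \(Q\) this equals \(E[e^{-s(\psi_{-L}+\psi_L)}\mathds{1}_{\psi_L\ge m}]/E[e^{-s(\psi_{-L}+\psi_L)}]\), the expectations being with respect to the (rescaled) Brownian excursion of length \(2n\) underlying \(g_{n,\beta}\). I would bound the numerator by \(\int_m^\infty e^{-sx}\rho_L(x)\,dx\), where \(\rho_L\) is the density of the excursion at a time within \(L\) of its midpoint, using the one-point estimate \(\rho_L(x)\lesssim x^2n^{-3/2}\); since \(sm\) is of order \(\lambda^{(2p-1)/p}\ge1\) this integral is of order \(s^{-1}m^2n^{-3/2}e^{-sm}\). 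For the denominator I would keep only the event \(\{\psi_{-L}\le 1/s,\ \psi_L\le1/s\}\), on which the integrand is \(\ge e^{-2}\), and invoke the two-point lower bound \(\rho_{-L,L}(u,v)\gtrsim u^2v^2n^{-3/2}L^{-3/2}\) for \(0\le u,v\lesssim\sqrt L\) (valid because \(1/s\lesssim\sqrt L\)), giving a denominator of order \(s^{-6}n^{-3/2}L^{-3/2}\). The ratio is then of order \(m^2s^5L^{3/2}e^{-sm}\); substituting the values of \(m,s,L\), all powers of \(n\) cancel and one is left with a bound of order \(\lambda^{\,7-7/(2p)}e^{-c\lambda^{(2p-1)/p}}\), hence at most \(e^{-c'\lambda^{(2p-1)/p}}\) once \(\lambda\ge\lambda_0\).

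For the remaining term, conditionally on \(\psi_{-L}=u,\ \psi_L=v\) with \(u,v<m\), the Markov property of the Brownian excursion together with the fact that \(\gamma_i=0\) for \(|i|<L\) implies that \((\psi_i)_{|i|\le L}\) is a positive Gaussian bridge from \(u\) to \(v\) over \(2L\) steps. Since \(u,v<m\le\tfrac12\lambda n^{\alpha_p}\), a reflection/ballot estimate for positive bridges (classical, of the type in \cite[Lemma~7.4]{Ott+Velenik-2025a}, see also Appendix~\ref{app:comparison_Brown_excursion}) gives \(P(\psi_0\ge\lambda n^{\alpha_p})\le Ce^{-c(\lambda n^{\alpha_p})^2/L}\) uniformly in such \(u,v\); and \((\lambda n^{\alpha_p})^2/L\) is of order \(\lambda^{2-1/p}=\lambda^{(2p-1)/p}\), so this term is at most \(Ce^{-c\lambda^{(2p-1)/p}}\). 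Summing the three contributions and taking \(\lambda_0,n_0\) large enough yields the lemma.

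The main obstacle is the bookkeeping in the middle step: one needs both a one-point upper bound and a two-point lower bound on the density of a long Brownian excursion at times within \(O(L)\) of its midpoint, evaluated out to heights \(\lambda n^{\alpha_p}\) — far from the natural scale \(\sqrt n\) — and must check that the polynomial-in-\(\lambda\) prefactors that inevitably appear are absorbed into the stretched exponential (the powers of \(n\) must cancel, which is a consistency check on the two density estimates). The choice of \(L\) of order \(\lambda^{1/p}n^{2\alpha_p}\) is dictated by making the ``kink cost'' \(e^{-sm}\) and the ``plateau-excursion cost'' \(e^{-c(\lambda n^{\alpha_p})^2/L}\) both equal to \(e^{-\Theta(\lambda^{(2p-1)/p})}\), which matches the lower bound of Lemma~\ref{lem:Gaussian:height_dev_proba_LB}.
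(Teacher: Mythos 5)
Your proposal is correct and follows essentially the same approach as the paper: the same flattened obstacle $h_n^+$, the same change of variables leading to $Q$, the same decomposition at the kink points $\pm L$ with the same choice of scales $L\asymp\lambda^{1/p}n^{2\alpha_p}$ and $m=\tfrac12\lambda n^{\alpha_p}$, and the same two mechanisms (exponential tilt at $\pm L$ versus bridge deviation over the plateau). The only substantive difference is technical: for the ``kink-point large'' term you use a union bound and one-point excursion marginals, whereas the paper conditions on $M=\max(\psi_{-L},\psi_L)$ via the Markov property under $Q'$ and then bounds $Q(\psi_{-L}+\psi_L\geq m)$ by an explicit two-dimensional Gaussian integral after the change of variables $s=x+y$, $t=x-y$; your bookkeeping of the exponents is correct and the powers of $n$ indeed cancel. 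One small imprecision: under $Q$ the conditional law of $(\psi_i)_{|i|<L}$ given $\psi_{\pm L}$ is the positive \emph{Brownian} bridge sampled at integer times (not the positive Gaussian random-walk bridge), but the ballot estimate you invoke holds for that object as well, so the argument goes through.
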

\begin{proof}
    Introduce \(M = \max(\psi_{-L},\psi_L)\). Then, using~\eqref{eq:Gaussian_change_of_measure}, monotonicity, and Markov's property, one has that, for any \(m>0\) fixed,
    \begin{multline*}
        \mu_n^{0,h_n^+}(\varphi_0\geq \lambda n^{\alpha_p}+n)
        =
        Q''(\psi_0 \geq \lambda n^{\alpha_p})
        \leq
        Q'(\psi_0 \geq \lambda n^{\alpha_p})
        \\
        =
        Q'\bigl( \mu_{L}^{\psi,0}(\varphi_0 \geq \lambda n^{\alpha_p})\bigr)
        \leq
        Q'\bigl( \mu_{L}^{M,0}(\varphi_0 \geq \lambda n^{\alpha_p})\bigr)
        \leq
        \mu_{L}^{m,0}(\varphi_0 \geq \lambda n^{\alpha_p}) + Q'\bigl( M\geq m\bigr).
    \end{multline*}
    We now make a suitable choice of \(L\) and \(m\) as functions of \(\lambda\): fix \(L\) integer and \(m\) real to satisfy (taking \(n,\lambda\) large enough depending on \(p\))
    \begin{equation*}
        \tfrac{1}{2}\lambda^{1/p} n^{2\alpha_p} \leq L \leq \lambda^{1/p} n^{2\alpha_p},
        \quad
        m = \tfrac{1}{2}\lambda n^{\alpha_p}.
    \end{equation*}
    By our constraint on \(\lambda n^{\alpha_p}\), this implies
    \begin{equation*}
        L\leq \tfrac{n}{2},
        \quad
        \sqrt{L}\leq 2\lambda^{-(2p-1)/2p}m \leq \tfrac{m}{2}.
    \end{equation*}
    where we used \(\lambda\geq \lambda_0\) for a suitable choice of \(\lambda_0\) in the second part.
    We first bound \(\mu_{L}^{m,0}(\varphi_0 \geq \lambda n^{\alpha_p})\). One has that using monotonicity once again,
    \begin{multline*}
        \mu_{L}^{m,0}(\varphi_0 \geq \lambda n^{\alpha_p} )
        =
        \mu_{L}^{0,-m}(\varphi_0 \geq \lambda n^{\alpha_p}/2)
        \leq
        \mu_{L}^{0,0}(\varphi_0 \geq \lambda n^{\alpha_p}/2)
        \\
        \leq
        c\lambda^{1-1/2p}e^{-\lambda^{2-1/p}/8\beta}
        =
        c\lambda^{(2p-1)/2p}e^{-\frac{\lambda^{(2p-1)/p}}{8\beta}}
        \leq
        c'e^{-\frac{\lambda^{(2p-1)/p}}{9\beta}},
    \end{multline*}
    where \(c,c'>0\) are constant depending only on \(p,\beta\), and we used Lemma~\ref{lem:Gaussian:excursion_marginals} in the second inequality, the choices of \(L,m\), and \(\lambda \geq 1\). We then bound \(Q'\bigl( M\geq m\bigr)\).
    We first use stochastic ordering: as \(Q'\preccurlyeq Q\),
    \begin{equation*}
        Q'(M\geq m) \leq Q(M\geq m) \leq
        Q\bigl(\psi_L+\psi_{-L}\geq m\bigr).
    \end{equation*}
    Then, (undefined integrals are over \(\R^2\))
    \begin{align*}
        Q\bigl(\psi_L+\psi_{-L}\geq m\bigr)
        &=
        \frac{\int dx dy \, \mathds{1}_{x+y\geq m} xy \, e^{-\frac{x^2+y^2}{2(n-L)}}e^{-T\gamma_L (x+y)}
        \bigl(e^{-\frac{(x-y)^2}{4L}} - e^{-\frac{(x +y)^2}{4L}}\bigr)
        \mathds{1}_{x,y>0}}{\int dx dy \, xy \, e^{-\frac{x^2+y^2}{2(n-L)}}e^{-T\gamma_L (x+y)}
        \bigl(e^{-\frac{(x-y)^2}{4L}} - e^{-\frac{(x +y)^2}{4L}}\bigr)
        \mathds{1}_{x,y>0}}
        \\
        &=
        \frac{\int_m^{\infty} ds \int_{-s}^{s} dt (s^2- t^2)e^{-\frac{s^2+t^2}{4(n-L)}}e^{-T\gamma_L s}
        e^{-\frac{t^2}{4L}}\bigl(1 - e^{-\frac{s^2-t^2}{4L}}\bigr)}{\int_{0}^{\infty} ds \int_{-s}^s dt (s^2-t^2)e^{-\frac{s^2+t^2}{4(n-L)}}e^{-T\gamma_L s}
        e^{-\frac{t^2}{4L}}\bigl(1 - e^{-\frac{s^2-t^2}{4L}}\bigr)}
        \\
        &\leq
        \frac{\int_m^{\infty} ds \int_{-s}^{s} dt \, s^2e^{-\frac{s^2+t^2}{4(n-L)}}e^{-T\gamma_L s}
        e^{-\frac{t^2}{4L}}}{\int_{\sqrt{L}}^{\infty} ds \int_{-s/2}^{s/2} dt (s^2-t^2)e^{-\frac{s^2+t^2}{4(n-L)}}e^{-T\gamma_L s}
        e^{-\frac{t^2}{4L}}\bigl(1 - e^{-\frac{s^2-t^2}{4L}}\bigr)}
        \\
        &\leq
        \frac{4\int_m^{\infty} ds \, s^2 e^{-\frac{s^2}{4(n-L)}}e^{-T\gamma_L s} \int_{-s}^{s} dt \, e^{-\frac{t^2}{4(n-L)}}
        e^{-\frac{t^2}{4L}}}{3\int_{\sqrt{L}}^{\infty} ds \, s^2 e^{-\frac{s^2}{4(n-L)}}e^{-T\gamma_L s} \int_{-s/2}^{s/2} dt \, e^{-\frac{t^2}{4(n-L)}}
        e^{-\frac{t^2}{4L}}\bigl(1 - e^{-\frac{3}{16}}\bigr)},
    \end{align*}
    where we changed variables to \(s= x+y\), \(t=x-y\) in the second line, and we used monotonicity of the integrand in the denominator, as well as the restriction on the integration domain in the last line. We then have that for \(s\geq \sqrt{L}\), (recall \(L\leq n/2\))
    \begin{multline*}
        e^{-\frac{1}{8}}\sqrt{L}
        \leq
        \int_{-\sqrt{L}/2}^{\sqrt{L}/2} dt \, e^{-\frac{t^2}{2n}}
        e^{-\frac{t^2}{4L}}
        \leq
        \int_{-s/2}^{s/2} dt \, e^{-\frac{t^2}{4(n-L)}}
        e^{-\frac{t^2}{4L}}
        \\
        \leq
        \int_{-s}^{s} dt \, e^{-\frac{t^2}{4(n-L)}}
        e^{-\frac{t^2}{4L}}
        \leq
        \int_{\R} dt \, e^{-\frac{t^2}{4L}}
        =
        \sqrt{4\pi L}.
    \end{multline*}
    Plugging this estimates in the bound we had for \(Q\bigl(\psi_L+\psi_{-L}\geq m\bigr)\), we obtain that
    \begin{align*}
        Q\bigl(\psi_L+\psi_{-L}\geq m\bigr)
        &\leq
        C\frac{\int_m^{\infty} ds \, s^2 e^{-\frac{s^2}{4(n-L)}}e^{-T\gamma_L s}}{\int_{\sqrt{L}}^{\infty} ds \, s^2 e^{-\frac{s^2}{4(n-L)}}e^{-T\gamma_L s}}
        \\
        &=
        Ce^{-T\gamma_L (m-\sqrt{L})}\frac{\int_{\sqrt{L}}^{\infty} ds \, (s+m-\sqrt{L})^2 e^{-\frac{(s+m-\sqrt{L})^2}{4(n-L)}}e^{-T\gamma_L s}}{\int_{\sqrt{L}}^{\infty} ds \, s^2 e^{-\frac{s^2}{4(n-L)}}e^{-T\gamma_L s}}
        \\
        &\leq
        Ce^{-T\gamma_L m/2}(1+m/\sqrt{L})^2 \frac{\int_{\sqrt{L}}^{\infty} ds \, s^2 e^{-\frac{s^2}{4(n-L)}}e^{-T\gamma_L s}}{\int_{\sqrt{L}}^{\infty} ds \, s^2 e^{-\frac{s^2}{4(n-L)}}e^{-T\gamma_L s}},
    \end{align*}
    where \(C = (1-e^{3/16})\frac{8\sqrt{\pi}}{3}e^{1/8}\), and we used \(\sqrt{L}\leq m/2\) and monotonicity of the integrand in the numerator. Plugging in the values of \(m,L,\gamma_L\), we get
    \begin{align*}
        Q\bigl(\psi_L+\psi_{-L}\geq m\bigr)
        &\leq
        C'\lambda^{(2p-1)/p} e^{-Tc_p(\lambda^{1/p} n^{2\alpha_p}/2n)^{p-1} \lambda n^{\alpha_p}/4}
        \\
        &\leq
        C'\lambda^{(2p-1)/p} \exp\bigl(-\tfrac{Tc_p}{2^{p+1}} \lambda^{(2p-1)/p}\bigr)
    \end{align*}
    with \(c_p = \frac{p^p}{(p-1)^{p-1}}\), and \(C'\) some constant depending only on \(p\). This concludes the proof of the lemma.
\end{proof}
To conclude, we treat the ``very large'' deviation regime. For \(K\geq n/2^p\), one has
\begin{equation}
    \mu_{n}^{0,h_n}(\varphi_0 \geq n + K)
    \leq
    \mu_{n}^{n,n}(\varphi_0 \geq n + K)
    =
    \mu_{n}^{0,0}(\varphi_0 \geq K)
    \\
    \leq
    c\tfrac{K}{\sqrt{n}} e^{-\frac{K^2}{2\beta n}}
\end{equation}
where \(c>0\) is a constant depending only on \(\beta,p\), and we used monotonicity in the first inequality, and Lemma~\ref{lem:Gaussian:excursion_marginals} in the last. In particular, if one writes \(K=\lambda n^{\alpha_p}\), one has first that \(K/\sqrt{n} \leq \lambda\) and, since \(\lambda\geq 2^{-p}n^{1-\alpha_p}\) and \(\alpha_p \leq 1/2\),
\[
	\frac{K^2}{n} = \lambda^2 n^{2\alpha_p-1} \geq \tfrac12 \lambda^{(2p-1)/p}.
\]
In particular,
\begin{equation*}
    \mu_{n}^{0,h_n}(\varphi_0 \geq n + \lambda n^{\alpha_p})
    \leq
    c' e^{-\frac{\lambda^{(2p-1)/p}}{8\beta}}.
\end{equation*}
where \(c'>0\) depends only on \(p,\beta\). Combining this with the stochastic ordering described at the beginning of the section and Lemma~\ref{lem:Gaussian:height_normal_dev_proba_UB}, we get the next lemma.
\begin{lemma}
    \label{lem:Gaussian:height_dev_proba_UB}
    Let \(p\geq 1,\beta>0\). Let \(S\) be the random walk with i.i.d.\ steps of law \(\calN(0,\beta)\). There are \(c>0, n_0\geq 1,\lambda_0\geq 0\) such that, for any \(\lambda\geq \lambda_0\) and \(n\geq n_0\),
    \begin{equation*}
        P\bigl(S_0 \geq n + \lambda n^{\alpha_p} \given S_{-n}=S_n =0,\, S\geq h_n\bigr) \leq
        e^{-c \lambda^{(2p-1)/p}}.
    \end{equation*}
\end{lemma}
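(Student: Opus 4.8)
The plan is to stitch together the two estimates already prepared in the paragraphs preceding the statement. Since, by definition of the measures $\mu_{n,m}^{\xi,g}$, the law of $S$ under $P(\cdot \given S_{-n}=S_n=0,\,S\geq h_n)$ is exactly $\mu_n^{0,h_n}$, it suffices to bound $\mu_n^{0,h_n}(\varphi_0 \geq n + \lambda n^{\alpha_p})$, and I would split the argument at the threshold $\lambda n^{\alpha_p} = n/2^p$, that is, at $\lambda = 2^{-p} n^{1-\alpha_p}$.

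In the range $\lambda_0 \leq \lambda$ with $\lambda n^{\alpha_p} < n/2^p$, I would first apply the FKG comparison \eqref{eq:Gaussian:h_to_hplus}, so that $\mu_n^{0,h_n}(\varphi_0 \geq n+\lambda n^{\alpha_p}) \leq \mu_n^{0,h_n^+}(\varphi_0 \geq n+\lambda n^{\alpha_p})$, and then invoke Lemma~\ref{lem:Gaussian:height_normal_dev_proba_UB} directly, which yields an upper bound of the form $e^{-c_1\lambda^{(2p-1)/p}}$ with $c_1=c_1(p,\beta)>0$, uniformly over $n\geq n_0$.

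In the complementary range $\lambda n^{\alpha_p}\geq n/2^p$, I would use the chain of inequalities displayed just above the statement: by monotonicity $\mu_n^{0,h_n}(\varphi_0 \geq n+K) \leq \mu_n^{n,n}(\varphi_0 \geq n+K) = \mu_n^{0,0}(\varphi_0 \geq K)$, and the last quantity is $\leq c\,K n^{-1/2} e^{-K^2/(2\beta n)}$ by the Gaussian-excursion marginal estimate (Lemma~\ref{lem:Gaussian:excursion_marginals}). Writing $K=\lambda n^{\alpha_p}$ and using $\lambda\geq 2^{-p}n^{1-\alpha_p}$ together with $\alpha_p=\frac{p-1}{2p-1}<\frac12$, one computes $K/\sqrt n=\lambda n^{\alpha_p-1/2}\leq \lambda$ and $K^2/n=\lambda^2 n^{2\alpha_p-1}\geq\tfrac12\lambda^{(2p-1)/p}$, the last inequality being precisely the one available because $1-\alpha_p=\frac{p}{2p-1}$. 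Hence the bound becomes $\leq c\,\lambda\, e^{-\lambda^{(2p-1)/p}/(4\beta)}$, and since $(2p-1)/p\geq 1>0$ the polynomial prefactor $\lambda$ is absorbed into the exponential once $\lambda_0$ is large enough, giving $\leq e^{-\lambda^{(2p-1)/p}/(8\beta)}$.

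Finally I would take $c=\min\bigl(c_1,\tfrac{1}{9\beta}\bigr)$ and enlarge $\lambda_0$ and $n_0$ if necessary so that all remaining multiplicative constants (those of Lemma~\ref{lem:Gaussian:height_normal_dev_proba_UB} and the factor between $\tfrac{1}{8\beta}$ and $\tfrac{1}{9\beta}$) are absorbed uniformly in $n$; since the two ranges $\lambda n^{\alpha_p}<n/2^p$ and $\lambda n^{\alpha_p}\geq n/2^p$ exhaust all $\lambda\geq\lambda_0$, this proves the lemma. I do not expect any genuine obstacle here: the analytic content lies entirely in Lemma~\ref{lem:Gaussian:height_normal_dev_proba_UB} (itself the real work of this section) and in the excursion-marginal estimate, and the only point requiring attention is the bookkeeping that makes the exponent $(2p-1)/p$ produced by the threshold computation agree with the one coming out of the flattened-obstacle analysis — which it does precisely because the shift $L^p/n^{p-1}$ and the Gaussian cost $\sqrt L$ are balanced by the choice $\alpha_p=\frac{p-1}{2p-1}$.
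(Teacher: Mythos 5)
Your proposal is correct and takes exactly the same route as the paper: bound the measure by $\mu_n^{0,h_n^+}$ via FKG and apply Lemma~\ref{lem:Gaussian:height_normal_dev_proba_UB} in the regime $\lambda n^{\alpha_p}<n/2^p$, then use the monotone comparison $\mu_n^{0,h_n}\preccurlyeq\mu_n^{0,0}(\cdot - n)$ together with the excursion-marginal bound of Lemma~\ref{lem:Gaussian:excursion_marginals} for the complementary regime, matching the paper's computation $K/\sqrt n\leq\lambda$ and $K^2/n\geq\tfrac12\lambda^{(2p-1)/p}$ verbatim. No gaps.
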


\appendix

\section{Gaussian random walk}
\label{app:Gaussian_RW}

In this appendix, we collect some classical results on Gaussian random walks that are used in Section~\ref{sec:Gaussian_p}. Below, \((X_k)_{k\geq 1}\) denotes an i.i.d.\ sequence of \(\calN(0,\beta)\) random variables, \(S_0\) is an \(\calN(0,1)\) random variable independent of \((X_k)_{k\geq 1}\), and
\begin{equation*}
    S_{n} = S_0 + \sum_{k=1}^n X_k.
\end{equation*}

\subsection{Gaussian process}

\begin{lemma}
    \label{lem:Gaussian_Walk:Walk_Bridge_are_Gaussian}
    For any \(n\geq 1\), under \(P(\cdot \given S_0=0)\), the vector \((S_0,S_1,\dots,S_n)\) is a centred Gaussian vector with covariances
    \begin{equation*}
        E(S_iS_j\given S_0=0) = \beta i,\quad 0\leq i \leq j \leq n.
    \end{equation*}
    Moreover, for \(n\geq 2\), under \(P(\cdot \given S_0 =0 = S_n )\), the vector \((S_0,S_1,\dots,S_n)\) is a centred Gaussian vector with covariances
    \begin{equation*}
        E(S_iS_j\given S_0 =0=S_n) = \beta \frac{i(n-j)}{n},\quad 0\leq i \leq j \leq n.
    \end{equation*}
\end{lemma}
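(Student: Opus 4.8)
The plan is to use the single structural fact that $(S_0,X_1,\dots,X_n)$ is a vector of independent, hence jointly, Gaussian random variables, together with the two standard stability properties of Gaussian vectors: a linear image of a Gaussian vector is Gaussian, and the conditional law of a Gaussian vector given an affine event is again Gaussian, with mean and covariance given by the usual Schur-complement formulas.

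First I would treat the walk case. Since $S_0$ is independent of $(X_k)_{k\ge1}$, under $P(\cdot\given S_0=0)$ one simply has $S_0=0$ and $S_k=\sum_{i=1}^k X_i$ for $k\ge1$. For any coefficients $a_0,\dots,a_n$, the combination $\sum_{k=0}^n a_kS_k=\sum_{i=1}^n\big(\sum_{k\ge i}a_k\big)X_i$ is a linear combination of the independent centred Gaussians $X_i$, so it is centred Gaussian; hence $(S_0,\dots,S_n)$ is a centred Gaussian vector under $P(\cdot\given S_0=0)$. For $0\le i\le j\le n$, using $E(X_lX_m)=\beta\,\mathds{1}_{l=m}$ and $l\le i\le j$,
\[
    E(S_iS_j\given S_0=0)=\sum_{l=1}^{i}\sum_{m=1}^{j}E(X_lX_m)=\sum_{l=1}^{i}\beta=\beta i,
\]
which is the first formula (the cases involving $i=0$ being trivial).

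Next I would obtain the bridge case by conditioning the centred Gaussian vector $(S_0,\dots,S_n)$ of the previous step additionally on the affine event $\{S_n=0\}$. Conditioning a Gaussian vector on an affine constraint again yields a Gaussian vector, so $(S_0,\dots,S_n)$ is Gaussian under $P(\cdot\given S_0=0=S_n)$; it is centred because the conditional mean of $S_i$ equals $\frac{\Cov(S_i,S_n)}{\Var(S_n)}\cdot 0=0$. The conditional-covariance (Schur complement) formula, together with the first part, then gives, for $0\le i\le j\le n$,
\[
    E(S_iS_j\given S_0=0=S_n)=E(S_iS_j\given S_0=0)-\frac{E(S_iS_n\given S_0=0)\,E(S_jS_n\given S_0=0)}{E(S_n^2\given S_0=0)}=\beta i-\frac{(\beta i)(\beta j)}{\beta n}=\beta\frac{i(n-j)}{n},
\]
which is the second formula.

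There is essentially no obstacle here. The only points deserving a word of care are that conditioning on $\{S_0=0\}$ causes no difficulty because $S_0$ is independent of the increments, and that the events $\{S_0=0\}$ and $\{S_n=0\}$ have zero probability, so the conditioning is to be read through the standard disintegration of a Gaussian law (equivalently, as the limit as $\varepsilon\to0$ of conditioning on $\{|S_0|\le\varepsilon,\,|S_n|\le\varepsilon\}$); with that reading, the conditional Gaussianity and the Schur-complement formulas used above are classical.
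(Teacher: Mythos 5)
Your proposal is correct and follows essentially the same route as the paper, which also treats the first part as immediate and derives the bridge covariance from the conditional-density (Schur complement) formula for Gaussian vectors. You simply spell out the details that the paper leaves implicit, and the arithmetic $\beta i - (\beta i)(\beta j)/(\beta n) = \beta\,i(n-j)/n$ checks out.
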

\begin{proof}
    The first part is trivial. The second part follows from the first and the formula for conditional densities of Gaussian vectors.
\end{proof}

\subsection{Comparison with Brownian excursion}
\label{app:comparison_Brown_excursion}

Recall that the Brownian excursion of length \(L\) is a \(\R\)-valued process \((\BrownExc_t)_{t\in [0,L]}\) with
\begin{itemize}
    \item continuous sample paths;
    \item \(\BrownExc_0=\BrownExc_L=0\), \(\BrownExc_t>0\) for \(t\in (0,L)\);
    \item marginals given by: for \(m\geq 1\), \(0<t_1<\dots<t_m<L\), \((\BrownExc_{t_1},\dots,\BrownExc_{t_m})\) has a density with respect to Lebesgue given by (see for example~\cite{Takacs-1991})
    \begin{multline}
    \label{eq:def:Brownian_excursion_density}
        g_{t_1,\dots,t_m}^{(L)}(x_1,\dots, x_m)
        \\=
        2\sqrt{2 \pi L^3} q_{t_1}(x_1)p_{t_2-t_1}(x_1,x_2)\dots p_{t_m-t_{m-1}}(x_{m-1},x_m) q_{L-t_m}(x_m),
    \end{multline}
    where
    \begin{equation*}
        q_t(x) = \mathds{1}_{x>0} \frac{x}{\sqrt{2\pi t^3}}e^{-x^2/2t},
        \quad
        p_t(x,y) = \mathds{1}_{y>0}\frac{1}{\sqrt{2\pi t}}\bigl(e^{-(x-y)^2/2t}-e^{-(x+y)^2/2t}\bigr).
    \end{equation*}
\end{itemize}

The next lemma is a direct generalisation of the observation that the Brownian excursion of length \(n\) taken at integer times stochastically dominates the (suitably scaled) Gaussian random walk excursion.
\begin{lemma}
    \label{lem:Gaussian:excursion_comparison}
    Let \(n\geq 1\), \(T=1/\beta>0\). Let \(f_1,\dots, f_{n-1}:(0,+\infty)\to [0,+\infty)\) be measurable functions such that
    \begin{gather*}
        E\bigl( f_1(S_1)\cdots f_{n-1}(S_{n-1})\bgiven S_0=S_n=0, S\geq 0\bigr)<\infty,
        \\
        E\bigl( f_1(\sqrt{\beta}\BrownExc_1)\cdots f_{n-1}(\sqrt{\beta}\BrownExc_{n-1})\bigr)<\infty.
    \end{gather*}
    Let \(\mu\) be the probability measure on \(\R^{\{0,\dots, n\}}\) given by
    \begin{equation*}
        d\mu(x_0,\dots,x_n)
        \propto
        \delta(x_0)\delta(x_n)\Bigl(\prod_{i=1}^{n-1}f_i(x_i) \mathds{1}_{x_i>0} \, dx_i\Bigr) e^{-T\sum_{i=1}^n(x_i-x_{i-1})^2},
    \end{equation*}
    and \(\nu\) be the probability measure on \(\R^{\{0,\dots, n\}}\) given by
    \begin{equation*}
        d\nu(y_0,\dots,y_n)
        \propto
        \delta(y_0)\delta(y_n)g_{1,\dots,n-1}^{(n)}\bigl(\sqrt{T}y_1,\dots, \sqrt{T}y_{n-1}\bigr) \Bigl(\prod_{i=1}^{n-1}f_i(y_i) \, dy_i\Bigr).
    \end{equation*}
    Then,
    \begin{equation*}
        \mu \preccurlyeq \nu.
    \end{equation*}
\end{lemma}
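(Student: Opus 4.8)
The plan is to realise both $\mu$ and $\nu$ as measures absolutely continuous with respect to Lebesgue measure on the open orthant $(0,\infty)^{\{1,\dots,n-1\}}$ — a distributive lattice for the coordinatewise order, once we identify a configuration with its interior coordinates and set $x_0=x_n=0$ — and to show that the density of $\nu$ equals that of $\mu$ multiplied by an explicit \emph{coordinatewise nondecreasing} weight. Stochastic domination then follows because reweighting a positively associated (FKG) measure by a nondecreasing density yields a stochastically larger measure; this is the form of Holley's inequality we shall use. The two integrability hypotheses on the $f_i$ are used precisely to ensure that both normalising constants are finite and positive, so that $\mu$ and $\nu$ are bona fide probability measures carried by the open orthant.

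First I would insert the explicit excursion marginals \eqref{eq:def:Brownian_excursion_density} into the definition of $\nu$. With $q_1(u)\propto\mathds{1}_{u>0}\,u\,e^{-u^2/2}$ and $p_1(u,v)\propto\mathds{1}_{v>0}\,e^{-(u-v)^2/2}\bigl(1-e^{-2uv}\bigr)$, the rescaling $u_i=\sqrt{T}\,x_i$ turns the $n$ factors (the two end $q_1$'s and the $n-2$ interior $p_1$'s) into
\[
    g^{(n)}_{1,\dots,n-1}\!\bigl(\sqrt{T}x_1,\dots,\sqrt{T}x_{n-1}\bigr)
    \;=\;
    C_{n,T}\;\Phi(x)\;\exp\!\Bigl(-\tfrac{T}{2}\textstyle\sum_{i=1}^{n}(x_i-x_{i-1})^2\Bigr),
    \qquad
    \Phi(x):=\mathds{1}_{\{x>0\}}\,x_1\,x_{n-1}\prod_{i=2}^{n-1}\bigl(1-e^{-2Tx_{i-1}x_i}\bigr),
\]
where $C_{n,T}>0$ and $x_0=x_n=0$, the single‑site Gaussian factors from the two $q_1$'s supplying the missing terms $(x_1-x_0)^2$ and $(x_n-x_{n-1})^2$, so that the Gaussian exponent is exactly the one carried by $d\mu/dx$. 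Hence, on the open orthant, $\tfrac{d\nu}{dx}=(\mathrm{const})\cdot\Phi\cdot\tfrac{d\mu}{dx}$: $\nu$ is $\mu$ reweighted by $\Phi$.

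It remains to combine two elementary facts. (i) $\Phi$ is coordinatewise nondecreasing on $(0,\infty)^{n-1}$, being a product of nonnegative functions ($x_1$, $x_{n-1}$, and each reflection term $1-e^{-2Tx_{i-1}x_i}$) each nondecreasing in every variable. (ii) The density $g_\mu=\tfrac{d\mu}{dx}$ is log‑supermodular on $\R^{n-1}$, so $\mu$ is positively associated: the factor $\prod_i f_i(x_i)$ is separable (hence log‑modular), and expanding $-\tfrac{T}{2}\sum_{i=1}^n(x_i-x_{i-1})^2$ and using $x_0=x_n=0$ leaves, apart from single‑site terms, only the ferromagnetic couplings $T\sum_{i=2}^{n-1}x_{i-1}x_i$, each supermodular — this is the strong‑FKG property already recorded for these measures (reweighting by a separable positive function preserves it). Given (i) and (ii), for any bounded nondecreasing $h$ the FKG inequality for $\mu$ gives
\[
    \mathbb{E}_\nu[h]
    =\frac{\mathbb{E}_\mu[h\,\Phi]}{\mathbb{E}_\mu[\Phi]}
    =\mathbb{E}_\mu[h]+\frac{\Cov_\mu(h,\Phi)}{\mathbb{E}_\mu[\Phi]}
    \;\ge\;\mathbb{E}_\mu[h],
\]
all expectations being finite because $\mathbb{E}_\mu[\Phi]<\infty$ (this constant is, up to the explicit factor $C_{n,T}$, the ratio of the normalising constants of $\nu$ and $\mu$, both finite by hypothesis) and $h$ is bounded. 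This is exactly $\mu\preccurlyeq\nu$. The main obstacle is the computation in the second paragraph: correctly unwinding \eqref{eq:def:Brownian_excursion_density} — in particular handling the two end factors $q_1$ and the $\sqrt{T}$ rescaling — so as to exhibit the factorisation of the excursion density into a constant, the nondecreasing weight $\Phi$, and exactly the Gaussian weight of $\mu$; everything downstream of that identity is a routine FKG argument.
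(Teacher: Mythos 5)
Your proposal is correct and is essentially the paper's argument repackaged: you extract the identical factorization of the rescaled excursion density into the Gaussian weight $e^{-TH(y)}$ times the nondecreasing factor $\Phi(y)=y_1y_{n-1}\prod_{i=2}^{n-1}(1-e^{-2Ty_{i-1}y_i})$, and then conclude domination from the supermodularity of $-H$ together with the monotonicity of $\Phi$. The only cosmetic difference is that the paper verifies the two-measure Holley condition $g_\nu(x\vee y)\,g_\mu(x\wedge y)\geq g_\nu(y)\,g_\mu(x)$ directly, whereas you phrase the same two ingredients as ``$\mu$ is FKG and $d\nu/d\mu\propto\Phi$ is nondecreasing, hence $\Cov_\mu(h,\Phi)\geq 0$''; these are equivalent routes resting on exactly the same computations.
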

\begin{proof}
    Introduce
    \begin{equation*}
        H(x) = \frac{1}{2}\sum_{i=1}^n (x_i-x_{i-1})^2.
    \end{equation*}
    First note that
    \begin{equation*}
        g_{1,\dots,n-1}^{(n)}\bigl(\sqrt{T}y_1,\dots, \sqrt{T}y_{n-1}\bigr)
        =
        c_{n,\beta} y_1y_{n-1} e^{-TH(y)} \Bigl(\prod_{i=2}^{n-1}(1-e^{-2Tx_{i}x_{i-1}})\Bigr)\prod_{i=1}^{n-1}\mathds{1}_{y_i>0},
    \end{equation*}
    where \(c_{n,\beta}\) is a normalisation constant depending only on \(n,\beta\).
    We check that the Holley condition is satisfied. We need to check that, for any \(x,y\in [0,+\infty)^{n-1}\),
    \begin{multline*}
        g_{1,\dots,n-1}^{(n)}\bigl(\sqrt{T}(x\vee y)\bigr)\Bigl(\prod_{i=1}^{n-1}f_i\bigl((x\vee y)_i\bigr)\Bigr)
        \Bigl(\prod_{i=1}^{n-1}f_i\bigl((x\wedge y)_i\bigr) \mathds{1}_{(x\wedge y)_i>0} \Bigr) e^{-T H(x\wedge y)}
        \\\geq
        g_{1,\dots,n-1}^{(n)}\bigl(\sqrt{T}(y)\bigr)\Bigl(\prod_{i=1}^{n-1}f_i\bigl( y_i\bigr)\Bigr)
        \Bigl(\prod_{i=1}^{n-1}f_i(x_i) \mathds{1}_{x_i>0}\Bigr) e^{-TH(x)}.
    \end{multline*}
    As
    \begin{equation*}
        \prod_{i=1}^{n-1}\mathds{1}_{(x\vee y)_i >0}f_i\bigl((x\vee y)_i\bigr) \mathds{1}_{(x\wedge y)_i >0} f_i\bigl((x\wedge y)_i\bigr)
        =
        \prod_{i=1}^{n-1}f_i( y_i) \mathds{1}_{y_i >0} f_i(x_i)\mathds{1}_{x_i >0},
    \end{equation*}
    it is equivalent to check that
    \begin{multline*}
        (x_1\vee y_1)(x_{n-1}\vee y_{n-1})e^{-T H(x\vee y)} \Bigl(\prod_{i=2}^{n-1}(1-e^{-2T(x\vee y)_{i}(x\vee y)_{i-1}})\Bigr)  e^{-T H(x\wedge y)} 
        \\\geq
        y_1y_{n-1}e^{-T H( y)} \Bigl(\prod_{i=2}^{n-1}(1-e^{-2Ty_{i}y_{i-1}})\Bigr) e^{-TH(x)}.
    \end{multline*}
    This last inequality follows from observing that
    \begin{equation*}
        H(x\vee y) + H(x\wedge y) \leq H(x) + H(y),
    \end{equation*}
    and that the functions \(x\mapsto x\) and \(x\mapsto 1-e^{-x}\) are non-decreasing on \([0,+\infty)\).
\end{proof}

\subsection{Walk excursion marginals}

\begin{lemma}
    \label{lem:Gaussian:excursion_marginals}
    For any \(1\leq k\leq n/2\), \(a> 0\),
    \begin{equation*}
        P(S_k \geq a \given S_0=0 =S_n, S\geq 0)
        \leq
        \frac{4}{\sqrt{\pi}}\Bigl(\frac{a}{\sqrt{\beta k}} + \frac{\sqrt{\beta k}}{ a}\Bigr)e^{-a^2/2\beta k}.
    \end{equation*}
\end{lemma}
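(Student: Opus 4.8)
The plan is to compare the conditioned Gaussian walk with a Brownian excursion and then carry out an explicit one–dimensional tail estimate. Applying Lemma~\ref{lem:Gaussian:excursion_comparison} with $f_1=\dots=f_{n-1}\equiv 1$, the law of $(S_0,\dots,S_n)$ under $P(\,\cdot\bgiven S_0=0=S_n,\,S\geq 0)$ is stochastically dominated by the law of $\sqrt{\beta}\,(\BrownExc_0,\dots,\BrownExc_n)$, where $\BrownExc$ is a Brownian excursion of length $n$ (here $T=1/\beta$, so the factor $1/\sqrt{T}$ appearing in that lemma equals $\sqrt{\beta}$). Since $\varphi\mapsto\mathds{1}_{\varphi_k\geq a}$ is non-decreasing, this gives
\[
	P\bigl(S_k\geq a\bgiven S_0=0=S_n,\,S\geq 0\bigr)\leq P\bigl(\BrownExc_k\geq a/\sqrt{\beta}\bigr),
\]
so it remains to bound the upper tail of the one-dimensional marginal $\BrownExc_k$ at an integer time $1\leq k\leq n/2$.

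Next I would write this marginal down explicitly. Taking $m=1$, $L=n$, $t_1=k$ in~\eqref{eq:def:Brownian_excursion_density}, the density of $\BrownExc_k$ is $x\mapsto 2\sqrt{2\pi n^3}\,q_k(x)\,q_{n-k}(x)$, which simplifies to $\sqrt{2/\pi}\,\sigma^{-3}x^2e^{-x^2/2\sigma^2}\mathds{1}_{x>0}$ with $\sigma^2=k(n-k)/n$ (one checks it integrates to $1$ using $\int_0^\infty u^2e^{-u^2/2}\,du=\sqrt{\pi/2}$). Setting $c=a/(\sqrt{\beta}\,\sigma)$ and substituting $x=\sigma u$,
\[
	P\bigl(\BrownExc_k\geq a/\sqrt{\beta}\bigr)=\sqrt{2/\pi}\int_c^\infty u^2e^{-u^2/2}\,du.
\]
An integration by parts gives $\int_c^\infty u^2e^{-u^2/2}\,du=ce^{-c^2/2}+\int_c^\infty e^{-u^2/2}\,du$, and the elementary bound $\int_c^\infty e^{-u^2/2}\,du\leq c^{-1}e^{-c^2/2}$ (valid for $c>0$, since $u/c\geq 1$ on the range of integration) yields $\int_c^\infty u^2e^{-u^2/2}\,du\leq(c+c^{-1})e^{-c^2/2}$.

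It then remains to re-express the bound in terms of $a$. Because $1\leq k\leq n/2$ one has $k/2\leq\sigma^2\leq k$, hence $a/\sqrt{\beta k}\leq c\leq\sqrt{2}\,a/\sqrt{\beta k}$; in particular $c^{-1}\leq\sqrt{\beta k}/a$ and $e^{-c^2/2}\leq e^{-a^2/2\beta k}$. Putting everything together,
\[
	P\bigl(S_k\geq a\bgiven S_0=0=S_n,\,S\geq 0\bigr)\leq\sqrt{\tfrac{2}{\pi}}\Bigl(\sqrt{2}\,\tfrac{a}{\sqrt{\beta k}}+\tfrac{\sqrt{\beta k}}{a}\Bigr)e^{-a^2/2\beta k}\leq\tfrac{2}{\sqrt{\pi}}\Bigl(\tfrac{a}{\sqrt{\beta k}}+\tfrac{\sqrt{\beta k}}{a}\Bigr)e^{-a^2/2\beta k},
\]
which is in fact slightly stronger than the claimed bound (one may take $2/\sqrt{\pi}$ in place of $4/\sqrt{\pi}$). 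No step is genuinely hard: the only points requiring a little care are correctly tracking the factor $\sqrt{\beta}$ produced by the comparison in Lemma~\ref{lem:Gaussian:excursion_comparison}, and the mild distortion $\sigma^2\ne k$ caused by the finite horizon $n$ — which is exactly what the hypothesis $k\leq n/2$ is there to control; everything else is a routine Gaussian computation.
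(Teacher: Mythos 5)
Your proposal is correct and follows essentially the same route as the paper: stochastic domination by the Brownian excursion via Lemma~\ref{lem:Gaussian:excursion_comparison}, the explicit one-dimensional marginal density of $\BrownExc_k$, and the Gaussian tail estimate $\int_c^\infty u^2e^{-u^2/2}\,du\leq(c+c^{-1})e^{-c^2/2}$. The only (cosmetic) difference is the bookkeeping: the paper first drops the factor $e^{-x^2/2(n-k)}$ and bounds the prefactor using $n/((n-k)k)\leq 2/k$, whereas you keep the exact variance $\sigma^2=k(n-k)/n$ and invoke $k\leq n/2$ only at the end, which gains you a factor of $2$ in the constant.
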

\begin{proof}
    Let \(\rmP^+ = P(\cdot \given S_0 = 0 = S_n, S_i> 0\; \forall i=1,\dots, n-1)\).
    Let \((\BrownExc_t)_{t\in [0,n]}\) be the standard Brownian excursion of length \(n\). From Lemma~\ref{lem:Gaussian:excursion_comparison},
    \begin{equation*}
        (S_0,S_1,\dots,S_n) \preccurlyeq \sqrt{\beta}(\BrownExc_0,\BrownExc_1,\dots,\BrownExc_n),
    \end{equation*}
    with \(S\sim \rmP^+\). Now, the density of \(\BrownExc_t\) with respect to Lebesgue is given by
    \begin{equation*}
        g_t(x) = \frac{2}{\sqrt{2\pi}} \Bigl(\frac{ n}{ (n-t) t}\Bigr)^{\frac{3}{2}} x^2 e^{-x^2/2t}e^{-x^2/2(n-t)}\mathds{1}_{x>0}.
    \end{equation*}
    In particular, for \(k\leq\frac{n}{2}\), and \(T=1/\beta\),
    \begin{multline*}
        \rmP^+(S_k\geq a)
        \leq
        \int_{\sqrt{T} a}^{\infty} g_k(x) dx
        \leq
        \frac{4}{\sqrt{\pi}} k^{-\frac{3}{2}} \int_{\sqrt{T} a}^{\infty} x^2 e^{-x^2/2k}dx
        \\
        =
        \frac{4}{\sqrt{\pi}} \int_{\frac{\sqrt{T} a}{\sqrt{k}}}^{\infty}x^2 e^{-x^2/2}dx
        \leq
        \frac{4}{\sqrt{\pi}}\Bigl(\frac{\sqrt{T} a}{\sqrt{k}} + \frac{\sqrt{k}}{\sqrt{T} a}\Bigr)e^{-Ta^2/2k}.
        \tag*{\(\qed\)}
    \end{multline*}
    \renewcommand{\qedsymbol}{}
\end{proof}

\bibliographystyle{plain}
\bibliography{BibTeX}

\begin{thebibliography}{10}

\bibitem{Abraham+Smith-1986}
Douglas~B. Abraham and Edgar~R. Smith.
\newblock An exactly solved model with a wetting transition.
\newblock {\em J. Statist. Phys.}, 43(3-4):621--643, 1986.

\bibitem{Alexander-2001}
Kenneth~S. Alexander.
\newblock Cube-root boundary fluctuations for droplets in random cluster
  models.
\newblock {\em Comm. Math. Phys.}, 224(3):733--781, 2001.

\bibitem{Borovkov-1967}
Aleksandr~A. Borovkov.
\newblock Boundary value problems for random walks and large deviations in
  function spaces.
\newblock {\em Teor. Verojatnost. i Primenen.}, 12:635--654, 1967.

\bibitem{Caddeo+Kim+Lubetzky-2024}
Patrizio Caddeo, Yujin~H. Kim, and Eyal Lubetzky.
\newblock On level line fluctuations of {SOS} surfaces above a wall.
\newblock {\em Forum Math. Sigma}, 12:Paper No. e91, 59, 2024.

\bibitem{Caputo+Ganguly-2025}
Pietro Caputo and Shirshendu Ganguly.
\newblock Uniqueness, mixing, and optimal tails for {B}rownian line ensembles
  with geometric area tilt.
\newblock {\em Probab. Math. Phys.}, 6(1):195--239, 2025.

\bibitem{Caputo+Lubetzky+Martinelli+Sly+Toninelli-2016}
Pietro Caputo, Eyal Lubetzky, Fabio Martinelli, Allan Sly, and Fabio~Lucio
  Toninelli.
\newblock Scaling limit and cube-root fluctuations in {SOS} surfaces above a
  wall.
\newblock {\em J. Eur. Math. Soc. (JEMS)}, 18(5):931--995, 2016.

\bibitem{dAlimonte+Panis-2023}
Lucas D'Alimonte and Romain Panis.
\newblock Exact cube-root fluctuations in an area-constrained random walk
  model, 2023.
\newblock arXiv:2311.12780.

\bibitem{Dimitrov+Serio-2025}
Evgeni Dimitrov and Christian Serio.
\newblock Uniform convergence of {D}yson {F}errari-{S}pohn diffusions to the
  {A}iry line ensemble.
\newblock {\em Ann. Inst. Henri Poincar\'e{} Probab. Stat.}, 61(1):385--402,
  2025.

\bibitem{Ferrari+Spohn-2005}
Patrik~L. Ferrari and Herbert Spohn.
\newblock Constrained {B}rownian motion: fluctuations away from circular and
  parabolic barriers.
\newblock {\em Ann. Probab.}, 33(4):1302--1325, 2005.

\bibitem{Ganguly+Gheissari-2021}
Shirshendu Ganguly and Reza Gheissari.
\newblock Local and global geometry of the 2{D} {I}sing interface in critical
  prewetting.
\newblock {\em Ann. Probab.}, 49(4):2076--2140, 2021.

\bibitem{GroeneBoom-1989}
Piet Groeneboom.
\newblock Brownian motion with a parabolic drift and {A}iry functions.
\newblock {\em Probab. Theory Related Fields}, 81(1):79--109, 1989.

\bibitem{Hammond-2012}
Alan Hammond.
\newblock Phase separation in random cluster models {I}: {U}niform upper bounds
  on local deviation.
\newblock {\em Comm. Math. Phys.}, 310(2):455--509, 2012.

\bibitem{Hammond+Peres-2008}
Alan Hammond and Yuval Peres.
\newblock Fluctuation of a planar {B}rownian loop capturing a large area.
\newblock {\em Trans. Amer. Math. Soc.}, 360(12):6197--6230, 2008.

\bibitem{Hegde+Kim+Serio-2025}
Milind Hegde, Yujin~H. Kim, and Christian Serio.
\newblock Scaling limit and tail bounds for a random walk model of {SOS} level
  lines, 2025.
\newblock arXiv:2502.10384.

\bibitem{Hryniv+Velenik-2004}
Ostap Hryniv and Yvan Velenik.
\newblock Universality of critical behaviour in a class of recurrent random
  walks.
\newblock {\em Probab. Theory Related Fields}, 130(2):222--258, 2004.

\bibitem{Ioffe+Ott+Shlosman+Velenik-2022}
Dmitry Ioffe, S\'ebastien Ott, Senya Shlosman, and Yvan Velenik.
\newblock Critical prewetting in the 2{D} {I}sing model.
\newblock {\em Ann. Probab.}, 50(3):1127--1172, 2022.

\bibitem{Ioffe+Shlosman+Velenik-2015}
Dmitry Ioffe, Senya Shlosman, and Yvan Velenik.
\newblock An invariance principle to {F}errari-{S}pohn diffusions.
\newblock {\em Comm. Math. Phys.}, 336(2):905--932, 2015.

\bibitem{Ioffe+Velenik+Wachtel-2018}
Dmitry Ioffe, Yvan Velenik, and Vitali Wachtel.
\newblock Dyson {F}errari-{S}pohn diffusions and ordered walks under area
  tilts.
\newblock {\em Probab. Theory Related Fields}, 170(1-2):11--47, 2018.

\bibitem{Mogulskii-1976}
Anatolii~A. Mogulskii.
\newblock Large deviations for the trajectories of multidimensional random
  walks.
\newblock {\em Teor. Verojatnost. i Primenen.}, 21(2):309--323, 1976.

\bibitem{Nechaev+Polovnikov+Shlosman+Valov+Vladimirov-2019}
Sergei Nechaev, Kirill Polovnikov, Senya Shlosman, Alexander Valov, and
  Alexander Vladimirov.
\newblock Anomalous one-dimensional fluctuations of a simple two-dimensional
  random walk in a large-deviation regime.
\newblock {\em Phys. Rev. E}, 99:012110, Jan 2019.

\bibitem{Novikov-1979}
Aleksandr~A. Novikov.
\newblock Estimates for and asymptotic behavior of the probabilities of a
  {W}iener process not crossing a moving boundary.
\newblock {\em Mat. Sb. (N.S.)}, 110(152)(4):539--550, 1979.

\bibitem{Ott+Velenik-2025a}
Sébastien Ott and Yvan Velenik.
\newblock Finite-time trajectorial estimates for inhomogeneous random walks.
\newblock Preprint, 2025.

\bibitem{Petrov-1995}
Valentin~V. Petrov.
\newblock {\em Limit theorems of probability theory}, volume~4 of {\em Oxford
  Studies in Probability}.
\newblock The Clarendon Press, Oxford University Press, New York, 1995.
\newblock Sequences of independent random variables, Oxford Science
  Publications.

\bibitem{Sloothaak+Wachtel+Zwart-2018}
Fiona Sloothaak, Vitali Wachtel, and Bert Zwart.
\newblock First-passage time asymptotics over moving boundaries for random walk
  bridges.
\newblock {\em J. Appl. Probab.}, 55(2):627--651, 2018.

\bibitem{Smith+Meerson-2019}
Naftali~R. Smith and Baruch Meerson.
\newblock Geometrical optics of constrained {B}rownian excursion: from the
  {KPZ} scaling to dynamical phase transitions.
\newblock {\em J. Stat. Mech. Theory Exp.}, (2):023205, 21, 2019.

\bibitem{Takacs-1991}
Lajos Tak\'acs.
\newblock A {B}ernoulli excursion and its various applications.
\newblock {\em Adv. in Appl. Probab.}, 23(3):557--585, 1991.

\bibitem{Alexander+Uzun-2003}
Hasan~B. Uzun and Kenneth~S. Alexander.
\newblock Lower bounds for boundary roughness for droplets in {B}ernoulli
  percolation.
\newblock {\em Probab. Theory Related Fields}, 127(1):62--88, 2003.

\bibitem{Velenik-2004}
Yvan Velenik.
\newblock Entropic repulsion of an interface in an external field.
\newblock {\em Probab. Theory Related Fields}, 129(1):83--112, 2004.

\end{thebibliography}

\end{document}